\newcounter{ENUM}
\newcommand{\itm}{\item}
\newenvironment{ilist}{\renewcommand{\theENUM}{\roman{ENUM}}\renewcommand{\itm}{\addtocounter{ENUM}{1}\item[(\theENUM)]}\begin{itemize}\setcounter{ENUM}{0}}{\end{itemize}}
\newenvironment{alist}[1][0]{\renewcommand{\theENUM}{\alph{ENUM}}\renewcommand{\itm}{\addtocounter{ENUM}{1}\item[\theENUM)]}\begin{itemize}\setcounter{ENUM}{#1}}{\end{itemize}}
\def\b{{\mathbf b}}
\def\bc{{\mathbf c}}
\def\be{{\mathbf e}}
\def\bf{{\mathbf f}}
\def\1{{\mathbf 1}}
\def\n{{\mathbf n}}
\def\bp{{\mathbf p}}
\def\br{{\mathbf r}}
\def\x{{\mathbf x}}
\def\y{{\mathbf y}}
\def\z{{\mathbf z}}
\def\Z{{\mathbb Z}}
\def\R{{\mathbb R}}
\def\C{{\mathbb C}}
\def\cN{{\mathcal N}}
\def\cR{{\mathcal R}}
\def\cT{{\mathcal T}}
\def\M{{\mathcal M}}
\def\S{{\mathfrak S}}
\def\aux{\operatorname{aux}}
\def\Span{\mathrm{span}}
\def\vol{\mathrm{Vol}}
\def\vert{\mathrm{Vert}}
\def\tcone{\mathrm{tcone}}
\def\fcone{\mathrm{fcone}}
\def\Cone{\mathrm{Cone}}
\newtheorem{thm}{Theorem}[section]
\newtheorem{lem}[thm]{Lemma}
\newtheorem{cor}[thm]{Corollary}
\theoremstyle{definition}
\newtheorem{defn}[thm]{Definition}
\newtheorem{ex}[thm]{Example}
\theoremstyle{remark}
\newtheorem{rem}[thm]{Remark}
\numberwithin{equation}{section}
\def\Mat{{\mathcal Mat}}
\def\ST{{\mathcal ST}}
\def\cyc{ {\operatorname {cycle}}}
\def\fract{ {\operatorname {frac}}}
\def\cofrac{ {\operatorname {cofrac}}}
\def\vertAux{ {\operatorname {vertAux}}}
\def\PertAux{ {\operatorname {PertAux}}}
\def\td{ {\operatorname {td}}}
\subjclass[2010]{Primary 52B12; Secondary 52A20, 05A15}
\address{Department of Mathematics, University of California, One Shields Avenue, Davis, California 95616.}
\keywords{transportation polytope, perturbation, multivariate generating function}
\email{fuliu@math.ucdavis.edu}
\begin{document}
\title{Perturbation of transportation polytopes}
\author{Fu Liu}\thanks{The author is partially supported by the Hellman Fellowship from UC Davis.} 
\begin{abstract}
We describe a perturbation method that can be used to reduce the problem of finding the multivariate generating function (MGF) of a non-simple polytope to computing the MGF of simple polytopes. We then construct a perturbation that works for any transportation polytope. We apply this perturbation to the family of central transportation polytopes of order $kn \times n,$ and obtain formulas for the MGFs of the feasible cone of each vertex of the polytope and the MGF of the polytope. The formulas we obtain are enumerated by combinatorial objects. A special case of the formulas recovers the results on Birkhoff polytopes given by the author and De Loera and Yoshida. We also recover the formula for the number of maximum vertices of transportation polytopes of order $kn \times n.$
\end{abstract}

\maketitle

\section{Introduction}

Counting the number of lattice points inside a polytope is a problem of much interest. One method that is widely used to obtain the number of lattice points of a rational polytope $P$ is via its so-called ``multivariate generating function'' or ``MGF''. A benefit of this approach is that the method can be extended to obtain the volume of $P$ and the number of lattice points in the dilations of $P$ from its MGF as well. Therefore, the multivariate generating function is an interesting and useful object to study. 

The most common approach to computing the MGF of a polytope (or a polyhedron) $P$ is to compute the MGF of the feasible cone of $P$ at each vertex and then sum them up, %(see Theorem \ref{brion} and Corollary \ref{cor:intepoly}), 
where the feasible cone of $P$ at a vertex $v$ is the cone generated by the edge directions coming out of $v.$
However, computations involving non-simple cones are often complicated.
When a cone is non-simple, a common method used to compute its MGF involves triangulating the cone into simple cones \cite{BarviPom, barvinokIntPts}. Perturbation is another method often used to deal with non-simple cones in computational mathematics \cite{dantzig, galKruseZornig, kruse}, but it has not been widely used for computing MGFs. (However, see \cite{brion-vergne3, szenesVergne} for some work in which perturbation is used for computing MGFs.)
In the first part of our paper, we introduce a framework for perturbing polyhedra, with which we can compute the MGFs of non-simple polyhedra. 

Although there are general formulas for counting lattices points of polytopes, there is a lot of interest in finding more explicit formulas for special families of polytopes. In the second half of our paper, we focus on the family of ``transportation polytopes''. We construct an explicit perturbation that works for any transportation polytope and then apply our perturbation method for computing MGFs to obtain combinatorial formulas for MGFs of transportation polytopes.

We now discuss our results in more detail. 
We start with the following theorem describing a framework for globally perturbing polyhedra, with which we can compute the MGFs of non-simple polyhedra. Although the theorem is equivalent to techniques known to the experts in the field, the particular presentation is well suited for our purposes, and we are not aware of it being in the literature in this form.

\begin{thm}\label{thm:funcentire}
Let $A$ be an $N \times D$ matrix and $\b \in \R^N.$ 
Suppose $P$ is a non-empty integral polyhedron in $\R^D$ defined by $A \x \le \b$ and $\b(t)$ is a continuous function on some interval containing $0$ such that $\b(0)=\b.$ Suppose for each $t \neq 0$ in the interval, $\b(t)$ defines a non-empty polyhedron $P(t) = \{ \x \ | \ A \x \le \b(t)\}$ with exactly $\ell$ vertices: $w_{t,1}, \dots, w_{t, \ell},$ and the feasible cone $\fcone(P(t), w_{t,j})$ of $P(t)$ at $w_{t,j}$ does not depend on $t,$ that is, for each $j: 1 \le j \le \ell,$ there exists a fixed cone $K_j$ such that $\fcone(P(t), w_{t,j}) = K_j$ for all $t \neq 0.$ Then 
	\[ f(P, \z) =  \sum_{j=1}^\ell \z^{\lim_{t \to 0} w_{t,j}} f(K_j, \z).\]Here $f(P,\z)$ and $f(K_j,\z)$ denote the multivariate generating functions of $P$ and $K_j$, respectively. 
\end{thm}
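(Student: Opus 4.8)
The plan is to derive the formula from two standard ingredients: Brion's theorem, which for a pointed rational polyhedron $Q$ expresses $f(Q,\z)=\sum_{v\in\vert(Q)}\z^{v}f(\fcone(Q,v),\z)$ as an identity of rational functions, and the fact that $f$ extends to a valuation on the algebra of rational polyhedra that vanishes on every polyhedron containing an affine line. Since $P$ is integral we may pass to its affine span and take $A,\b$ rational, so each $K_j$ is a pointed rational cone and $f(K_j,\z)$ is a well-defined rational function; moreover $\bar w_j:=\lim_{t\to0}w_{t,j}$ is a lattice point because it is a vertex of $P$: the rows of $A$ tight at $w_{t,j}$ stay tight at $\bar w_j$ by continuity and include $D$ linearly independent rows (linear independence is a property of the rows of $A$ alone, undisturbed by the perturbation), so $\bar w_j\in\vert(P)$, and conversely every vertex $v$ of $P$ equals some $\bar w_j$ because near $v$ the polyhedron $P(t)$ is cut out only by the facets of $P$ through $v$, whose normals span $\R^D$.

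First I would invoke Brion's theorem for $P$ and group the indices $j$ according to the vertex $\bar w_j=v$ to which they converge; this reduces the theorem to the local statement that, for every vertex $v$ of $P$,
\[ \sum_{j\,:\,\bar w_j=v} f(K_j,\z)\;=\;f(\fcone(P,v),\z). \]
Restricting $\b(t)$ to the constraints of $A\x\le\b(t)$ that are tight at $v$ and recentring at the origin turns $\b(t)$ into a perturbation of the cone $C:=\fcone(P,v)$ whose vertices, for small $t$, are exactly the $w_{t,j}-v$ with $\bar w_j=v$ and whose feasible cones are the corresponding $K_j$; so the local statement is just the theorem in the case where $P$ is a pointed cone with apex the origin, where it reads $f(C,\z)=\sum_j f(K_j,\z)$. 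To prove this I would try to establish an indicator-function identity of the shape $[C]=\sum_j[K_j]$ modulo polyhedra containing a line, so that applying the line-killing valuation $f$ gives the result; if no such clean identity is available, the alternative is to compute the difference $\sum_j f(K_j,\z)-f(C,\z)$ directly and show it vanishes.

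This last step is where I expect the real difficulty. The cones $K_j$ do \emph{not} form a polyhedral subdivision of $C$: already for a non-simple vertex $v$ of the octahedron $\{|x|+|y|+|z|\le1\}$ a generic perturbation produces two simplicial cones $K_1,K_2$ with extreme rays lying strictly outside $C=\fcone(P,v)$, and $\sum_j[K_j]\ne[C]$ as indicator functions, although one checks by direct computation that $f(K_1,\z)+f(K_2,\z)=f(C,\z)$ as rational functions. Hence the identity holds only after applying the valuation, and its proof must use that $f$ annihilates line-containing polyhedra (the cones $K_j$ have different regions of convergence, so no naive comparison of Laurent coefficients is possible). A second, related subtlety is that the perturbed vertices $w_{t,j}$ need not be lattice points, so $f(w_{t,j}+K_j,\z)$ is not $\z^{w_{t,j}}f(K_j,\z)$ and is not continuous in $w_{t,j}$ (a lattice point on a facet of $w_{t,j}+K_j$ is gained or lost as the vertex moves); consequently $\lim_{t\to0}f(P(t),\z)$ need not equal $f(P,\z)$, and one cannot prove the theorem by naively taking a limit in Brion's formula for $P(t)$. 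The point of the hypothesis that the feasible cones are \emph{exactly} independent of $t$ is that these lattice-point discrepancies are precisely compensated by the displacement of each vertex from its rounded position back to the integral limit $\bar w_j$; making this cancellation rigorous — equivalently, producing the line-containing correction terms above — is the heart of the matter.
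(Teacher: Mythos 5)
Your reduction is correct and matches the paper's structure: the limit points $\bar w_j$ are vertices of $P$, Brion's theorem for the integral polyhedron $P$ reduces the global identity to the local one $f(\fcone(P,v),\z)=\sum_{j:\bar w_j=v}f(K_j,\z)$ at each vertex $v$, and the right target is an indicator-function identity $[\fcone(P,v)]\equiv\sum_{j:\bar w_j=v}[K_j]$ modulo polyhedra containing lines, to which one applies the line-killing valuation. You also correctly observe the obstacles — the $K_j$ contain $\fcone(P,v)$ rather than subdividing it, so the identity cannot hold literally, and one cannot just take limits of $f(P(t),\z)$ because the perturbed vertices are generally not lattice points.

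But at exactly that point the proposal stops: you announce that making the cancellation rigorous ``is the heart of the matter'' without doing it. That is the genuine gap, and it is the only nontrivial part of the theorem. The paper closes it with a polar-cone argument (Lemma \ref{lem:vertconv}(iii)) that you do not have. The key observations are: (1) $P$ and $P(t)$ have the \emph{same} recession cone $K=\{\x:A\x\le 0\}$, since $K$ depends only on $A$; (2) applying the polar form of the Brianchon--Gram/Brion identity, $[K^\circ]\equiv\sum_{v\in\vert(Q)}[(\fcone(Q,v))^\circ]$ modulo polyhedra in proper subspaces, to both $Q=P$ and $Q=P(t)$, gives a single equation relating the polar cones of $P$ to those of $P(t)$; (3) the constraints tight at $w_{t,j}$ remain tight at $\bar w_j$, which gives the containment $K_j^\circ\subseteq(\fcone(P,\bar w_j))^\circ$; and (4) for distinct vertices $v\ne v'$ of $P$ the full-dimensional cones $(\fcone(P,v))^\circ$ and $(\fcone(P,v'))^\circ$ meet only inside the hyperplane $\{\n:\n(v-v')=0\}$, i.e.\ in a proper subspace. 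Combining (2)–(4), the global polar identity is forced to split vertex by vertex, giving $[(\fcone(P,v))^\circ]\equiv\sum_{j:\bar w_j=v}[K_j^\circ]$ modulo proper subspaces, and Barvinok's duality lemma (Lemma \ref{lem:equvipolar}) converts this back to the desired identity modulo polyhedra with lines. Without this detour through polars and the common recession cone, the ``modulo lines'' identity you are after has no visible handle — which is exactly why you found it hard.

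A smaller remark: your justification that every vertex $v$ of $P$ arises as some $\bar w_j$ (``near $v$ the polyhedron $P(t)$ is cut out only by the facets through $v$, whose normals span $\R^D$'') is suggestive but not a proof. In the paper this comes out for free from the same polar identity: if no $w_{t,j}$ converged to $v$ then $[(\fcone(P,v))^\circ]\equiv 0$ modulo proper subspaces, impossible for a full-dimensional cone.
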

See Definition \ref{defn:mgf} for the formal definition of multivariate generating function.
Note that the convergence of $w_{t,j}$ is proved in Theorem \ref{thm:func}, which also provides a local version for the perturbation method. According to Theorem \ref{thm:funcentire}, if we are given a non-simple integral polytope $P$ and are able to find a way of perturbing $P$ into simple polytopes satisfying conditions of Theorem \ref{thm:funcentire}, we can use the MGF of the feasible cones of the perturbed polytope $P(t)$ to calculate the MGF of $P.$ (See Example \ref{ex:thmfunc} for an example of how to use Theorems \ref{thm:funcentire} and \ref{thm:func}.) 
In \cite{birkhoff}, the authors find a combinatorial expression for the MGF of the Birkhoff polytope $B_n$, from which they obtain the first combinatorial formulas for the volume and Ehrhart polynomial of $B_n$ by residue calculations. In the present paper, we apply our perturbation method to generalize the results in \cite{birkhoff}. 
Before stating our results, we recall some definitions.

%Let $B_n$ be the convex polytope of $n \times n$ doubly-stochastic matrices; that is the set of real nonnegative matrices with all row and column sums equal to one. The volume and Ehrhart polynomial of $B_n$ are extremely hard to compute. The volume $\vol(B_n)$ has been computed for $n \le 10$ \cite{beckpixton} and the Ehrhart polynomial of $B_n$ has only been computed for $n \le 9.$ In \cite{birkhoff}, the authors find a combinatorial expression for the multivariate generating function (MGF) of $B_n$, from which they obtain the first combinatorial formulas for the volume and Ehrhart polynomial of $B_n.$ The majority work in \cite{birkhoff} is to find the MGF of $B_n,$ from which the authors obtain formulas for the volume and Ehrhart polynomial of $B_n$ by residue calculation.

The Birkhoff polytope belongs to the family of {\it transportation polytopes}:
Given $\br = (r_1, \dots, r_m)$ and $\bc = (c_1, \dots, c_n)$ two vectors of positive entries whose coordinates sum to a fixed number, the transportation polytope determined by $\br$ and $\bc$, denoted by $\cT(\br, \bc)$, is the set of all $m \times n$ nonnegative matrices in which row $i$ has sum $r_i$ and column $j$ has sum $c_j$. We call $\cT(\br,\bc)$ a transportation polytope of {\it order $m \times n.$} The problem of finding MGFs of transportation polytopes is relatively easy when a transportation polytope is ``non-degenerate'' (see Section \ref{sec:proptrans} for the definition), in which case one can apply Corollary \ref{cor:mgfnondeg} to find its MGF. 
However, if a transportation polytope is degenerate, it is usually non-simple, in which case one often has to triangulate the non-simple feasible cones. We describe a perturbation (in Lemma \ref{lem:unipert}) for all the transportation polytopes so that the problem is reduced to finding the MGF of non-degenerate cases. In addition to this, our perturbation provides families of transportation polytopes that obtain the maximum possible number of vertices among all the transportation polytopes of the same order (see Lemma \ref{lem:maxvert}).

%the common method used involves triangulating the feasible cone of each vertex, which is more complicated. One of the main results of this paper is a perturbation method that can be used to find the MGFs of degenerate transportation polytopes.

%As the simplest generalization of Birkhoff polytopes, it is natural to study the same questions of volume and Ehrhart polynomial for the family of transportation polytopes.
%Since the techniques used in \cite{birkhoff} to obtain volumes and Ehrhart polynomials from MGFs can be applied to the MGF of any polytope, we will focus on finding MGFs of transportation polytopes. 

Addressing a question of Bernd Sturmfels, we consider a special family of transportation polytopes that contains the Birkhoff polytope: the family of {\it central transportation polytopes}. A classical central transportation polytope of order $m \times n$ is a transportation polytope whose column sums are all $m$ and row sums are all $n.$ However, strictly speaking Birkhoff polytopes do not belong to this family because the column sums and row sums are all $1$ for $B_n.$ Therefore, we slightly generalize the definition of the classical central transportation polytopes to include Birkhoff polytopes. A transportation polytope $\cT(\br,\bc)$ is {\it central} of order $m \times n$ if all the column sums are the same and all the row sums are the same. The family of central transportation polytopes is an interesting subset of transportation polytopes. For example, when $m$ and $n$ are coprime, the central transportation polytope of order $m \times n$ achieves the maximum possible number of vertices among all the transportation polytopes of order $m \times n$ \cite{bolker}. In \cite{birkhoff}, the combinatorial data used to enumerate the MGF of $B_n$ is the family of rooted trees. %Although more than half of the central transportation polytopes are totally unimodular polytopes (see Defintion \ref{defn:totuni}), whose MGFs are relatively easy to find (using Cor \ref{cor:totuni}), it is not clear that all the MGFs can be enumerated by combinatorial data. 
Sturmfels asked whether we can give a nice description in terms of trees for the MGF of any central transportation polytope. We answer his question for central transportation polytopes of order $kn \times n$ by using the perturbation method we develop. 

Denote by $\ST_{k,n}$ the set of spanning trees of the complete bipartite graph $K_{kn,n}$ with right degree sequence $(k+1, k+1, \dots, k+1, k).$ (See Definition \ref{defn:degree} for the definition of right degree sequence.)
\begin{thm}\label{thm:mgfcase2} Assume that $\cT(\br,\bc)$ is a central transportation polytope of order $m \times n$, where $m = kn,$ and $\br = (a, \dots, a)$ and $\bc = (b, \dots, b)$ are two integer vectors. 

%Suppose $M \in \Mat_{k,n}.$ 
Suppose $M$ is a vertex of $\cT(\br,\bc).$ Then the MGF of the feasible cone $\fcone(\cT(\br,\bc), M)$ of $\cT(\br,\bc)$ at $M$ is
\begin{equation}\label{equ:mgfcase20}
	f(\fcone(\cT(\br,\bc), M), \z) =\sum_{T \in \PertAux(M)} \prod_{e \not\in E(T)} \frac{1}{1 - \z^{\cyc(T,e)}}.
\end{equation}
Thus, the MGF of $\cT(\br,\bc)$ is
%\[f(\cT(\br, \bc),\z) =\sum_{M: \text{a vertex of } \cT(\br,\bc) } \z^{M} \sum_{T \in \PertAux(M)} \prod_{e \not\in E(T)} \frac{1}{1 - \z^{\cyc(T,e)}}. \]
\begin{equation}\label{equ:mgfcase2entire1}
	f(\cT(\br, \bc),\z) =\sum_{T \in \ST_{k,n}} \z^{M_T} \prod_{e \not\in E(T)} \frac{1}{1 - \z^{\cyc(T,e)}}. 
\end{equation}
	In both equations, $\cyc(T,e)$ denotes the $m \times n$ $(0,-1,1)$-matrix associated to the unique cycle in the graph $T \cup e$ (see Definition \ref{defn:cyc} for details). In the first equation,
%\begin{eqnarray*}
%	\PertAux(M) &=& \{ \text{ all the spanning trees of complete bipartite graph $K_{kn,n}$ with} \label{equ:pertauxdes} \\
%	& & \quad \text{right degree sequence $(k+1, \dots, k+1, k)$ that contains $\aux(M)$ }\}.
%\end{eqnarray*}
\begin{equation*}
	\PertAux(M) = \{ T \in \ST_{k,n} \ | \ \text{ the auxiliary graph of $M$ is a subgraph of $T$},\}
\end{equation*}
and in the second equation, $M_T$ is the unique vertex of $\cT(\br,\bc)$ whose auxiliary graph is a subgraph of $T.$
See Definition \ref{defn:aux} for the definition of auxiliary graph.
\end{thm}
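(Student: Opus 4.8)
The plan is to deduce everything from the global perturbation framework of Theorem~\ref{thm:funcentire}, fed by the explicit perturbation of Lemma~\ref{lem:unipert}. Since $\br,\bc$ are integer vectors and the defining matrix of a transportation polytope is totally unimodular, $\cT(\br,\bc)$ is an integral polytope, so Theorem~\ref{thm:funcentire} applies once we exhibit a suitable family $\br(t),\bc(t)$. I would take the perturbed margins supplied by Lemma~\ref{lem:unipert}: for small $t\neq 0$ the polytope $\cT(\br(t),\bc(t))$ is non-degenerate, hence simple, the defining matrix $A$ is unchanged, and the feasible cone at each vertex is independent of $t$ because the edge directions of a non-degenerate transportation polytope are the purely combinatorial cycle matrices $\cyc(T,e)$. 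By the classical correspondence between vertices of a non-degenerate transportation polytope and spanning trees of $K_{m,n}$ --- refined by Lemma~\ref{lem:maxvert}, which identifies the relevant trees as exactly those in $\ST_{k,n}$ --- the vertices of $\cT(\br(t),\bc(t))$ are $w_{t,T}$ for $T\in\ST_{k,n}$, and Corollary~\ref{cor:mgfnondeg} gives
\[
f\bigl(\fcone(\cT(\br(t),\bc(t)),w_{t,T}),\z\bigr)=\prod_{e\notin E(T)}\frac{1}{1-\z^{\cyc(T,e)}}=:f(K_T,\z),
\]
with $K_T$ a cone that does not depend on $t$.

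It remains to identify $\lim_{t\to 0}w_{t,T}$. I claim this limit is the unique vertex $M_T$ of $\cT(\br,\bc)$ whose auxiliary graph (Definition~\ref{defn:aux}) is a subgraph of $T$. First one checks that this $M_T$ is well defined: the auxiliary graph of a vertex $M$ is a forest encoding the support of $M$ together with the extra edges forced by the vanishing partial row/column sums, and every spanning tree in $\ST_{k,n}$ contains the auxiliary graph of exactly one vertex; equivalently, the sets $\PertAux(M)$, as $M$ ranges over the vertices of $\cT(\br,\bc)$, partition $\ST_{k,n}$. For the limit itself, the support of $w_{t,T}$ is prescribed by $T$ for every $t\neq 0$; sending $t\to 0$ collapses the perturbation coordinates, and the entries that survive, with their limiting values, are precisely those of $M_T$. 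This identification is the crux and the main obstacle: it requires tracking how the specific perturbation of Lemma~\ref{lem:unipert} interacts with the combinatorial data (support and auxiliary edges) of a given degenerate vertex, and in particular verifying that the trees $T$ with $w_{t,T}\to M$ are exactly those in $\PertAux(M)$.

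Granting these two points, the global formula~\eqref{equ:mgfcase2entire1} is immediate from Theorem~\ref{thm:funcentire}:
\[
f(\cT(\br,\bc),\z)=\sum_{T\in\ST_{k,n}}\z^{\lim_{t\to 0}w_{t,T}}f(K_T,\z)=\sum_{T\in\ST_{k,n}}\z^{M_T}\prod_{e\notin E(T)}\frac{1}{1-\z^{\cyc(T,e)}}.
\]
For the feasible-cone formula~\eqref{equ:mgfcase20} I would instead use the local version of the perturbation method (Theorem~\ref{thm:func}): in a neighborhood of a fixed vertex $M$ of $\cT(\br,\bc)$, the only vertices of $\cT(\br(t),\bc(t))$ are the $w_{t,T}$ with $T\in\PertAux(M)$, all of which converge to $M$ and have feasible cones $K_T$, so the local statement yields $f(\fcone(\cT(\br,\bc),M),\z)=\sum_{T\in\PertAux(M)}\prod_{e\notin E(T)}\frac{1}{1-\z^{\cyc(T,e)}}$. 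Multiplying this by $\z^{M}$, summing over all vertices $M$, and using that $\{\PertAux(M)\}_M$ partitions $\ST_{k,n}$ recovers~\eqref{equ:mgfcase2entire1}, a consistency check that simultaneously reproves the count of maximum-vertex transportation polytopes of order $kn\times n$.
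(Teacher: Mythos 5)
Your overall strategy---perturb via Lemma~\ref{lem:unipert}, invoke Theorem~\ref{thm:funcentire} globally and Theorem~\ref{thm:func} locally, and use Corollary~\ref{cor:mgfnondeg} for the simple cones of the perturbed polytope---is exactly the paper's route (packaged there as Corollaries~\ref{cor:PertAux} and \ref{cor:unipert} together with Theorem~\ref{thm:case2}). However, there are two substantive issues with the way you fill in the details.

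First, you cite Lemma~\ref{lem:maxvert} as the result ``which identifies the relevant trees as exactly those in $\ST_{k,n}$.'' That lemma says nothing of the sort; it only asserts that $\cT(\br(t),\bc(t))$ attains the maximum vertex count among transportation polytopes of the given order. The identification $\vertAux(\br(t),\bc(t))=\ST_{k,n}$ is the content of Theorem~\ref{thm:case2}, and it is not free: it requires a direct argument that each right vertex $w_j$, $j<n$, must have degree at least $k+1$ (because $(a-t)k<b$) and $w_n$ degree at least $k$, forcing the right degree sequence to be $(k+1,\dots,k+1,k)$, together with the converse check that every $T\in\ST_{k,n}$ arises from the explicit solution $M_T(t)$ in \eqref{equ:case2sol}. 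A cardinality argument through Lemma~\ref{lem:maxvert} cannot substitute for this, since the count $|\ST_{k,n}|=\frac{(kn)!}{(k!)^n}n^{n-2}k^{n-1}$ is itself derived (Lemma~\ref{lem:bijrtt}) after the structural identification is in place, not before.

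Second, the identification $\lim_{t\to 0}w_{t,T}=M_T$, which you correctly flag as ``the crux and the main obstacle,'' is left as a sketch. In the paper this is precisely Lemma~\ref{lem:uniquevert}, whose proof rests on the explicit piecewise description of $M_T(t)$ (the co-fractional/fractional part claim in the proof of Lemma~\ref{lem:unipert}) and on Lemma~\ref{lem:samevrt} for uniqueness. Your intuition (``collapsing the perturbation coordinates leaves the entries of $M_T$'') is the right picture, but without the explicit induction on hook lengths showing that the entries of $w_{t,T}$ are of the form $M_T(i,j)\pm l(\cdot)\,t$, the well-definedness of $M_T$ and the fact that $\{\PertAux(M)\}_M$ partitions $\ST_{k,n}$ remain unestablished. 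So: same approach as the paper, but replace the Lemma~\ref{lem:maxvert} citation with Theorem~\ref{thm:case2} and its degree-sequence argument, and fill in Lemma~\ref{lem:uniquevert} rather than asserting it.
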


Note that the Birkhoff polytope $B_n$ is a special case of this family of central transportation polytopes when $k=1.$ Our result recovers the formulas for the MGF of $B_n$ given in \cite{birkhoff}.

We remark that in \cite{sturmfelsWeisZie}, the authors also provided a framework for global perturbation method using the idea of lattice ideals (see \cite[Corollary 7.5]{sturmfelsWeisZie}). The fundamentals of these two theories are similar; we just use different approaches. However, in our paper, we also provide a local version for the perturbation method in Theorem \ref{thm:func}. %One question might be interesting to ask is whether the techniques used in \cite{sturmfelsWeisZie} also lead to a similar result.

This paper is organized as follows. In Section \ref{sec:background}, we give background on results related to multivariate generating functions. In Section \ref{sec:perturb}, we give the framework for our perturbation method. In Section \ref{sec:proptrans}, we review results on properties of transportation polytopes. In Section \ref{sec:unipert}, we give a perturbation that works for every transportation polytope. In Section \ref{sec:central}, we apply the perturbation method to central transportation polytopes of order $kn \times n$ and give combinatorial formulas for the MGFs, Ehrhart polynomials, and volumes of these polytopes. When we specialize these formulas to Birkhoff polytopes, we recover the results in \cite{birkhoff}. We also recover the formula for the maximum possible number of vertices of transportation polytopes of order $kn \times n.$

\section{Background}\label{sec:background}
A {\it polyhedron} is the set of points defined by a system of linear inequalities 
\[ \n_i(\x) \le b_i, \ \forall 1 \le i \le N,\]
where $\n_i$ is a $D$-vector and $\n_i( \cdot)$ is the linear function that maps $\x \in \R^D$ to the dot product of $\n_i$ and $\x.$ Since we can consider $\n_i( \cdot)$ as a point in the dual space $(\R^n)^*$ of $\R^n,$ we will use the notation $\n_i$ (or any bold letter) to denote both the $D$-vector and the linear function.
For simplicity, we let $A$ be the $N \times D$ matrix whose row vectors are $\n_i$'s and $\b = (b_1,\dots, b_N)^T,$ so the system of linear inequalities can be represented as \[ A \x \le \b.\]
%Through out this paper, when we talk about general polyhedron, we always assume it is defined by $A \x \le \b,$ where the row vectors of $A$ are $\n_1, \dots, \n_N.$

A {\it polytope} is a bounded polyhedron. We assume familiarity with basic definitions of polyhedra and polytopes as presented in \cite{grunbaum, zie}. 
For any polyhedron $P,$ we use $\vert(P)$ to denote the vertex set of $P.$
An {\it integral} polyhedron is a polyhedron whose vertices are all lattice points, i.e., points with integer coordinates. A {\it rational} polyhedron is a polyhedron whose vertices are all rational points.

\subsection{Ehrhart polynomials and multivariate generating functions}

For any polytope $P \subset \R^D,$ and a nonnegative integer $t,$ we define 
\[ i(P,t) = \# (tP \cap \Z^D)\] to be the number of lattice points inside $t P = \{ t \x \ | \ \x \in P\},$ the {\it $t$th dilation} of $P.$ It is well-known that given a $d$-dimensional integral polytope $P,$ the function $i(P,t)$ is a polynomial in $t$ of degree $d$ with leading coefficient being the normalized volume of $P.$ Since this was first discovered by Ehrhart \cite{ehrhart}, we often refer to $i(P,t)$ as the {\it Ehrhart polynomial} of $P.$
Because the leading coefficient of $i(P,t)$ gives the volume of $P,$ obtaining the Ehrhart polynomials of polytopes is one way people use to compute volumes of polytopes.
One can find the Ehrhart polynomial $i(P,t)$ of $P$ using the multivariate generating function.

\begin{defn}\label{defn:mgf}
	Let $P \subset \R^D$ be a polyhedron. The {\it multivariate generating function} (or {\it MGF}) of $P$ is:
\[ f(P, \z) = \sum_{\alpha \in P \cap \Z^D} \z^\alpha, \]
where $\z^\alpha = \prod_{i=1}^D z_i^{\alpha_i}.$
\end{defn}
%\[ f(mP, \z) = \sum_{\x \in mP \cap \Z^D} \z^\alpha,\]
%where $\z^\alpha = \prod_{i=1}^N z_i^{\alpha_i}.$ 
\begin{ex}\label{ex:triangle}
Suppose $P$ is the triangle defined by $x \ge 0, y \ge 0$ and $x+y \le 2.$ It contains $6$ lattice points: $(0,0), (1,0), (2,0), (0,1), (1, 1)$ and $(0,2).$ Hence, its MGF is 
\[ f(P, \z) = z_1^0 z_2^0 + z_1^1 z_2^0 + z_1^2 z_2^0 + z_1^0 z_2^1 + z_1^1 z_2^1 + z_1^0 z_2^2 = 1 + z_1 + z_1^2 + z_2 + z_1 z_2 + z_2^2.\]
\end{ex}

Note that when $P$ is a polytope, we obtain $i(P, t)$ by plugging $z_i =1$ for all $i$ in the MGF $f(tP, \z)$ of the $t$th dilation $t P$ of $P.$

\subsection{Cones of polyhedra}
One benefit of computing the MGF of a polyhedron is that the problem can be reduced to computing the MGF of the tangent/feasible cones of the given polyhedron by applying Brion's theorem. Let's first review some related definitions and results.

\begin{defn}
%For each $i,$ let $H_i := \{ \x \ | \ \n_i(\x) = b_i\}.$ We call $H_i$'s the {\it defining hyperplanes} for $P.$
%For any face $f$ of $P,$ if $f \subseteq H_i$ for some $i,$ we say $H_i$ is a {\it supporting hyperplane} of $f.$
Suppose $P$ is a polyhedron defined by $A \x \le \b,$ where the row vectors of $A$ are $\n_1, \dots, \n_N.$
Let $F$ be a face of $P.$ If $\n_i(\x) = b_i$ for any $\x \in F,$ we say the inequality $\n_i(\x) \le b_i$ {\it supports} the face $F$ or $\n_i(\x) = b_i$ is a {\it supporting hyperplane} for $F.$

We denote by
\[ \cN(A, \b, F) = \{ \n_i \ | \ \n_i(\x)=b_i, \forall \x \in F \}\]
the set of the normal vectors $\n_i$ of all the inequalities among $A \x \le \b$ that support the face $F.$ %If we consider $\n_i$'s are linear functions, the set $\cN(A, \b, f)$ is the set of linear functions $\n_i$ whose maximum on $P$ is attained on $f.$
%the set of linear functions $\n_i$ that $f$ achieves the maximum value among all the points in $P,$ or equivalently, the set of row vectors $\n_i$ of $A$ satisfying $\n_i(\x) \le b_i$ achieves equality for each $\x \in f.$ 
\end{defn}
%Clearly, if $i \not\in \cN(A, \b, f),$ we have that $\n_i(\x) < b_i$ for some $\x \in f.$

\begin{defn}
	Let $K \subset \R^D$ be a cone. The {\it polar cone} of $K$ is the cone 
	\[ K^\circ = \{ \y \in (\R^D)^* \ | \ \y(\x) \le 0,  \forall \x \in K\}.\]
\end{defn}

\begin{defn}\label{defn:vertcone}
	Suppose $P$ is a polyhedron and $v \in P.$ The {\it tangent cone} of $P$ at $v$ is $$\tcone(P,v) = \{v+u \ | \ v+ \delta u \in P \mbox{ for all sufficiently small $\delta >0$}  \}.$$
	The {\it feasible cone} of $P$ at $v$ is $$\fcone(P,v) = \{u \ | \ v+ \delta u \in P \mbox{ for all sufficiently small $\delta >0$}  \}.$$
\end{defn}

Let $v$ be a vertex of $P.$ One can check that the polar cone of the feasible cone of $P$ at $v$ is the cone generated by the vectors in $\cN(A, \b, v):$ 
\begin{equation}\label{equ:gendfcone}
	(\fcone(P,v))^\circ = \Cone(\cN(A, \b, v)).
\end{equation}

\begin{defn}
	Suppose $P$ is a non-empty polyhedron defined by $A \x \le \b.$ The {\it recession cone} of $P,$ denoted by $K_P$, is
	\[ K_P = \{ \x \ | \ A \x \le 0 \}.\]
\end{defn}

\subsection{Indicator functions and Brion's theorem}

For a set $S \subseteq \R^D$, the indicator function $[ S ]: \R^D
\rightarrow \R$ of $S$ is defined as $$[ S ] (x) = \left \{
\begin{array}{ll} 1 \mbox{ if }x \in S, \\ 0 \mbox{ if }x \not \in
S.\\
\end{array}\right .$$ 

We assume the readers are familiar with the definition of algebra of polyhedra/polytopes and valuation presented in \cite{BarviPom}. The following lemma gives the two important equations of indicator functions of cones of polyhedra. 
\begin{lem}[Theorem 6.4 and Problem 6.2 in \cite{barvinokIntPts}] Suppose $P$ is a non-empty polyhedron without lines. Then
\begin{align} 
	[ P ] \equiv& \sum_{v \in \vert(P)} [\tcone(P, v)] \ \text{ modulo polyhedra with lines;} \label{equ:tcone}\\
	{[}  K_P ] \equiv& \sum_{v \in \vert(P)} [\fcone(P, v)] \ \text{ modulo polyhedra with lines,} \label{equ:fcone}
\end{align}
	where $K_P$ is the recession cone of $P.$

\end{lem}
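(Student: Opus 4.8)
The statement is the indicator-function form of Brion's theorem (the Brion--Lawrence--Varchenko identity), and my plan is to establish \eqref{equ:tcone} first and then deduce \eqref{equ:fcone} from it by a rescaling argument. Throughout, ``$\equiv$'' denotes equality in the quotient of the algebra of polyhedra by the subgroup spanned by indicator functions of polyhedra that contain a line, and I would use freely that this quotient still carries useful valuations --- in particular the exponential valuation $Q \mapsto \int_Q e^{\xi(\x)}\, d\x$ for generic $\xi$, which kills every polyhedron with a line.

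For \eqref{equ:tcone} I would start with the case that $P$ is a polytope. The input is the classical Brianchon--Gram relation
\[ [P] = \sum_{\emptyset \neq F \preceq P} (-1)^{\dim F}\,[\tcone(P,F)], \]
the sum running over all nonempty faces $F$ of $P$, where $\tcone(P,F) := \tcone(P,x)$ for $x$ in the relative interior of $F$; this identity is itself proved by a pointwise Euler-characteristic computation. The one observation I then need is that for $\dim F \ge 1$ the cone $\tcone(P,F)$ contains the linear subspace parallel to $\aff(F)$, hence contains a line, so $[\tcone(P,F)] \equiv 0$; the only terms that survive the reduction are the vertices, each with sign $(-1)^0 = 1$, which is exactly \eqref{equ:tcone}. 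To handle a general unbounded $P$ without lines I would truncate: since $P$ has no lines, the recession cone $K_P$ is pointed, so I can pick a covector $\bc$ with $\bc(\x) > 0$ for all $\x \in K_P \setminus \{0\}$; then for $N$ large, $P_N := P \cap \{\bc(\x) \le N\}$ is a polytope that contains every vertex of $P$ with its tangent cone unchanged, together with finitely many new vertices on $\{\bc(\x) = N\}$. Applying the polytope case to $P_N$ and letting $N \to \infty$ gives $[P_N] \to [P]$ pointwise, the old-vertex contributions stabilize, and it remains to see that the new contributions die in the limit. This last point --- justifying a limit \emph{modulo polyhedra with lines} --- is the step I expect to be the main obstacle; the clean way around it is to apply the valuation above, which converts the claim into an elementary limit of rational functions. (Alternatively one can run the argument through the homogenization $\widehat P \subseteq \R^{D+1}$.)

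Finally, to obtain \eqref{equ:fcone} I would feed \eqref{equ:tcone} a rescaled polyhedron. Fix a vertex $v_1$ of $P$ and apply \eqref{equ:tcone} to $Q_s := \tfrac1s(P - v_1)$ for $s > 0$: its vertices are $\tfrac1s(v_j - v_1)$, and since feasible cones are invariant under scaling and translation, its tangent cone there is $\tfrac1s(v_j - v_1) + \fcone(P,v_j)$, so
\[ [Q_s] \equiv \sum_{v_j \in \vert(P)} \Big[\, \tfrac1s(v_j - v_1) + \fcone(P, v_j) \,\Big]. \]
Now I would send $s \to \infty$. Writing $P = \{\x : A\x \le \b\}$, a vector $\y$ lies in $Q_s$ iff $sA\y \le \b - Av_1$, and for all large $s$ this holds precisely when $A\y \le 0$; hence the left-hand side converges pointwise to $[K_P]$. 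On the right, each shift $\tfrac1s(v_j - v_1) \to 0$, so the $j$-th term converges pointwise to $[\fcone(P,v_j)]$. Passing to the limit through the same valuation (so that the limit is legitimate in the quotient --- again the delicate point) yields \eqref{equ:fcone}.
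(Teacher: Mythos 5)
The paper offers no proof of this lemma at all---it is imported wholesale as ``Theorem 6.4 and Problem 6.2 in \cite{barvinokIntPts}''---so there is no in-text argument to compare yours against. What you have written is a from-scratch reconstruction, and the core of it is sound: starting from the Brianchon--Gram identity $[P]=\sum_{F}(-1)^{\dim F}[\tcone(P,F)]$ and observing that every non-vertex face contributes a tangent cone containing a line is precisely the standard way to get \eqref{equ:tcone} for polytopes.

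The place you correctly flag as delicate is, however, a real gap rather than a loose end, in both of your limiting steps. You propose to legitimize the limits by hitting both sides with the exponential valuation $Q\mapsto\int_Q e^{\xi(\x)}\,d\x$ and arguing about rational functions. The trouble is that this Lebesgue-measure valuation annihilates much more than polyhedra with lines: it kills every polyhedron of positive codimension. So it is not injective on the quotient by polyhedra with lines (e.g.\ $\Phi([\{0\}])=0$), and an equality of valuations in the limit does not by itself upgrade to the indicator identity modulo lines that you need. To make your truncation argument for unbounded $P$ airtight you would have to work either with a finer valuation that is injective modulo lines, or with the homogenization $\widehat P\subset\R^{D+1}$ (which you mention in passing); the same caveat applies to the rescaling $Q_s=\tfrac1s(P-v_1)$ you use for \eqref{equ:fcone}.

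For \eqref{equ:fcone} in particular, there is a cleaner route that avoids limits entirely and matches the duality toolkit this very paper uses elsewhere (see the proof of Lemma \ref{lem:vertconv}(iii)): the polars $(\fcone(P,v))^{\circ}$, $v\in\vert(P)$, are exactly the maximal cones of the normal fan of $P$, and together they cover $K_P^{\circ}$ with overlaps only along proper faces; this gives directly
\[
[K_P^{\circ}]\equiv\sum_{v\in\vert(P)}\bigl[(\fcone(P,v))^{\circ}\bigr]\quad\text{modulo polyhedra in proper subspaces,}
\]
and a single application of Lemma \ref{lem:equvipolar} then produces \eqref{equ:fcone} with no limiting process. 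The same fan/duality viewpoint also handles the unbounded case of \eqref{equ:tcone} by homogenizing. So: right idea and correct for polytopes, but the two limit passages as written are not yet justified, and the polar-duality argument is both shorter and already available within the paper's own framework.
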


The following lemma gives a connection between indicator functions of cones and indicator functions of their polars. 
\begin{lem}[Corollary 6.5 in \cite{barvinokIntPts}] 
\label{lem:equvipolar}
%Let $P_i, i \in I$ be a family of polyhedra such that $0 \in P_i$ for all $i \in I$ and let $\alpha_i$ be numbers. Then
Let $\{K_i \ | \ i \in I\}$ be a family of cones and let $\alpha_i$ be numbers. Then
%\[ \sum_{i \in I} \alpha_i [P_i] \equiv 0 \ \text{ modulo polyhedra with lines} \]
\[ \sum_{i \in I} \alpha_i [K_i] \equiv 0 \ \text{ modulo polyhedra with lines} \]
if and only if
%\[ \sum_{i \in I} \alpha_i [P_i^\circ] \equiv 0 \ \text{ modulo polyhedra in proper subspaces.} \]
\[ \sum_{i \in I} \alpha_i [K_i^\circ] \equiv 0 \ \text{ modulo polyhedra in proper subspaces.} \]
\end{lem}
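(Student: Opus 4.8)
The plan is to deduce the equivalence from a single structural fact --- that polar duality is a valuation --- together with the elementary observation that, under polarity, ``containing a line'' and ``being contained in a proper subspace'' trade places. Throughout, ``cone'' means closed convex cone, as in the surrounding results, so that $(K^\circ)^\circ = K$.

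First I would invoke the following theorem, which in \cite{barvinokIntPts} precedes this corollary and which I would cite rather than reprove: polar duality extends to a well-defined linear operator $\mathcal{D}$ on the vector space $\mathcal{V}$ spanned by the indicator functions $[K]$ of non-empty closed convex cones $K \subseteq \R^D$, characterized by $\mathcal{D}([K]) = [K^\circ]$ for every such $K$. Equivalently, whenever a linear combination $\sum_{i \in I} \alpha_i [K_i]$ of such indicators is the zero function, so is $\sum_{i \in I} \alpha_i [K_i^\circ]$. Because $(K^\circ)^\circ = K$, linearity forces $\mathcal{D} \circ \mathcal{D} = \id$; in particular $\mathcal{D}$ is a bijection of $\mathcal{V}$ onto itself.

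Next I would record the polarity dictionary between lineality and dimension. For a non-empty closed convex cone $K$, the lineality space $K \cap (-K)$ coincides with $(\Span(K^\circ))^\perp$: using $K = (K^\circ)^\circ$, a vector $\v$ lies in $K \cap (-K)$ exactly when $\y(\v) \le 0$ and $\y(-\v) \le 0$ --- i.e.\ $\y(\v) = 0$ --- for every $\y \in K^\circ$. Hence $K$ contains a line if and only if $\Span(K^\circ) \neq \R^D$, that is, if and only if $K^\circ$ is contained in a proper linear subspace; applying this with $K$ replaced by $K^\circ$ gives the reverse: $K^\circ$ contains a line if and only if $K$ lies in a proper subspace. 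Therefore $\mathcal{D}$ maps the subspace $\mathcal{V}_{\mathrm{line}} \subseteq \mathcal{V}$ spanned by the $[K]$ with $K$ containing a line into the subspace $\mathcal{V}_{\mathrm{sub}} \subseteq \mathcal{V}$ spanned by the $[K]$ with $K$ lower-dimensional, and maps $\mathcal{V}_{\mathrm{sub}}$ into $\mathcal{V}_{\mathrm{line}}$; since $\mathcal{D}^2 = \id$, these two inclusions force $\mathcal{D}(\mathcal{V}_{\mathrm{line}}) = \mathcal{V}_{\mathrm{sub}}$, so $\mathcal{D}$ restricts to mutually inverse isomorphisms $\mathcal{V}_{\mathrm{line}} \cong \mathcal{V}_{\mathrm{sub}}$.

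Finally I would assemble the statement. Set $g = \sum_{i \in I} \alpha_i [K_i] \in \mathcal{V}$, so $\mathcal{D}(g) = \sum_{i \in I} \alpha_i [K_i^\circ]$. Since $\mathcal{D}$ is a bijection carrying $\mathcal{V}_{\mathrm{line}}$ onto $\mathcal{V}_{\mathrm{sub}}$ (and, symmetrically, $\mathcal{V}_{\mathrm{sub}}$ onto $\mathcal{V}_{\mathrm{line}}$), we have $g \in \mathcal{V}_{\mathrm{line}}$ if and only if $\mathcal{D}(g) \in \mathcal{V}_{\mathrm{sub}}$. To conclude, one identifies ``$g \equiv 0$ modulo polyhedra with lines'' with ``$g \in \mathcal{V}_{\mathrm{line}}$'' and ``$\mathcal{D}(g) \equiv 0$ modulo polyhedra in proper subspaces'' with ``$\mathcal{D}(g) \in \mathcal{V}_{\mathrm{sub}}$'': the containments in one direction are immediate from the definitions, and the reverse containments are the standard fact (again in \cite{barvinokIntPts}) that a combination of cone indicators can lie in the span of all polyhedra-with-lines (resp.\ lower-dimensional-polyhedra) indicator functions only if it already lies in $\mathcal{V}_{\mathrm{line}}$ (resp.\ $\mathcal{V}_{\mathrm{sub}}$). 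Chaining the equivalences yields the lemma. I expect the only genuine obstacle to be the valuation property quoted in the first step --- that polarity is function-level well defined --- whose proof (realizing $[K^\circ]$ as an Euler-characteristic-type transform of $[K]$, with care at the boundary case where $\y$ supports $K$) is non-trivial; everything after that is linear algebra and polarity bookkeeping.
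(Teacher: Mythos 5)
The paper gives no argument for this lemma at all: it is quoted directly as Corollary 6.5 of \cite{barvinokIntPts}, so there is no internal proof to compare against. Your write-up is essentially a reconstruction of the textbook argument, and its core is correct: the polarity valuation $\mathcal{D}$ on the span $\mathcal{V}$ of indicators of closed convex cones with $\mathcal{D}[K]=[K^{\circ}]$, the involution property $\mathcal{D}^2=\id$ coming from $(K^{\circ})^{\circ}=K$, and the dictionary that the lineality space of $K$ equals $(\Span K^{\circ})^{\perp}$ (so polarity exchanges ``contains a line'' with ``lies in a proper subspace'') are exactly the right ingredients, and your linear-algebra bookkeeping with $\mathcal{V}_{\mathrm{line}}$ and $\mathcal{V}_{\mathrm{sub}}$ is fine.

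The one step I would not accept as stated is the final identification of ``$g\equiv 0$ modulo polyhedra with lines'' with ``$g\in\mathcal{V}_{\mathrm{line}}$'' (and its analogue for proper subspaces). The nontrivial inclusion --- that a combination of cone indicators which equals $\sum_j\beta_j[P_j]$ with each $P_j$ an arbitrary polyhedron containing a line must already lie in the span of indicators of \emph{cones} containing lines --- is true, but it is not mere bookkeeping and should not be waved through as a citable ``standard fact'' without an argument. A clean fix: the pointwise limit $f\mapsto\bigl(x\mapsto\lim_{t\to 0^{+}}f(tx)\bigr)$ is well defined and linear on the span of indicators of polyhedra (for convex $P$ the set $\{t>0: tx\in P\}$ is an interval, so the limit exists for each $x$), it fixes $[K]$ for every closed cone $K$, and it sends $[P]$ to $[\fcone(P,0)]$ when $0\in P$ and to $0$ otherwise; since $\fcone(P,0)$ contains a line (resp.\ lies in a proper subspace) whenever $P$ does, applying this operator to the defining identity gives the needed inclusion. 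Alternatively you can bypass the issue altogether, which is closer to how the corollary is usually derived: apply the duality valuation on the full algebra of polyhedra (not just cones) to $\sum_i\alpha_i[K_i]=\sum_j\beta_j[P_j]$, using that if $P$ contains a line in direction $u$ then $P^{\circ}\subseteq u^{\perp}$, while if $Q\subseteq u^{\perp}$ then $Q^{\circ}\supseteq\R u$; this handles both directions at once, since for cones the two notions of polar coincide. With either repair your proof is complete.
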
	

Apply the above lemma to Equation \eqref{equ:fcone} gives us the following, which will be useful to us later:	
\begin{cor}
Suppose $P$ is a non-empty polyhedron without lines. 
\begin{equation}
	{[}  K_P^\circ ] \equiv \sum_{v \in \vert(P)} [(\fcone(P, v))^\circ] \ \text{ modulo polyhedra in proper subspaces.} \label{equ:dfcone}
\end{equation}
\end{cor}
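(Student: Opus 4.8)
The statement to prove is the Corollary: for a non-empty polyhedron $P$ without lines,
\[ {[}  K_P^\circ ] \equiv \sum_{v \in \vert(P)} [(\fcone(P, v))^\circ] \ \text{ modulo polyhedra in proper subspaces.} \]

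The plan is to apply Lemma \ref{lem:equvipolar} directly to Equation \eqref{equ:fcone}. First I would rewrite Equation \eqref{equ:fcone} as a single relation with all terms on one side:
\[ [K_P] - \sum_{v \in \vert(P)} [\fcone(P, v)] \equiv 0 \ \text{ modulo polyhedra with lines.} \]
This is exactly a relation of the form $\sum_{i \in I} \alpha_i [K_i] \equiv 0$ modulo polyhedra with lines, where the index set $I = \{ 0 \} \cup \vert(P)$, the cone indexed by $0$ is $K_P$ with coefficient $\alpha_0 = 1$, and for each vertex $v$ the cone is $\fcone(P,v)$ with coefficient $\alpha_v = -1$. (One should note $\vert(P)$ is finite because $P$ has no lines, so $I$ is a legitimate finite index set, and the terms $[\fcone(P,v)]$ and $[K_P]$ are indeed indicator functions of cones as required by the hypotheses of Lemma \ref{lem:equvipolar}.)

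Next I would invoke the ``if'' direction of Lemma \ref{lem:equvipolar}: since the relation holds modulo polyhedra with lines, the same linear combination of the polar cones vanishes modulo polyhedra in proper subspaces, i.e.
\[ [K_P^\circ] - \sum_{v \in \vert(P)} [(\fcone(P, v))^\circ] \equiv 0 \ \text{ modulo polyhedra in proper subspaces,} \]
which rearranges to the claimed identity. That is essentially the whole argument — it is a one-line deduction once the bookkeeping of indices and signs is set up.

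There is no real obstacle here; the only point requiring a word of care is making sure the hypotheses of Lemma \ref{lem:equvipolar} are met, namely that every $K_i$ in the relation is a cone (so that taking polars makes sense) — which holds since $K_P$ is the recession cone and each $\fcone(P,v)$ is a feasible cone — and that the vertex set is finite so the sum is a finite linear combination, which is guaranteed by $P$ having no lines. With those remarks in place the corollary follows immediately.
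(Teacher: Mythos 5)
Your proposal is correct and is exactly the argument the paper intends: the corollary is stated as an immediate consequence of applying Lemma \ref{lem:equvipolar} to Equation \eqref{equ:fcone}, which is precisely what you did, with the index bookkeeping spelled out. Nothing more is needed.
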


It turns out that the multivariate generating functions define a valuation on the algebra of polyhedra. %(See \cite{BarviPom} for the definitions of polyhedra algebra, indicator function, and valuation.)

\begin{thm}[Theorem 3.1 and its proof in \cite{BarviPom}] \label{thm:rational}
There is a map $\mathfrak F$ which, to each rational polyhedron $P
\subset \R^d$, associates a unique rational function $f(P,\z)$ in $D$ complex
variables $\z \in \C^D,$ $\z = (z_1, \dots, z_D),$ such that the
following properties are satisfied:
\begin{itemize}
\item[(i)] The map $\mathfrak F$ is a valuation. %(Please see
%  \cite{BarviPom} for details about valuation.)
\item[(ii)] If $P$ is pointed, there exists a nonempty open subset
  $U_p \subset \C^D,$ such that $\sum_{\alpha \in P \cap \Z^D} \z^{\alpha}$ converges absolutely to
  $f(P, \z)$ for all $\z \in U_P.$ 
\item[(iii)] If $P$ is pointed,
then $f(P,\z)$ satisfies
$$f(P,\z) = \sum_{\alpha \in P \cap \Z^D} \z^{\alpha}$$ for any $\z
\in \C^D$ where the series converges absolutely.
\item[(iv)] If $P$ is not pointed, i.e., $P$ contains a line, then $f(P, \z) = 0.$ 
\end{itemize}
\end{thm}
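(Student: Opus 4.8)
\textbf{Proof proposal for Theorem \ref{thm:rational}.}

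The plan is to construct the map $\mathfrak F$ by first defining it on pointed rational cones with vertex at the origin, then extending to arbitrary rational polyhedra via the valuation property together with the indicator-function identities \eqref{equ:tcone} and \eqref{equ:fcone}, and finally verifying properties (i)--(iv). For a pointed rational cone $K$ with apex at the origin, I would use the fact (following from Minkowski--Weyl and the theory of fundamental parallelepipeds) that $K$ can be triangulated into simplicial cones, and for a simplicial cone $K = \Cone(u_1,\dots,u_r)$ spanned by primitive lattice vectors, the sum $\sum_{\alpha \in K \cap \Z^D} \z^\alpha$ converges on a nonempty open set to the explicit rational function $\left(\sum_{\alpha \in \Pi} \z^\alpha\right)\big/\prod_{i=1}^r (1 - \z^{u_i})$, where $\Pi$ is the (finite) set of lattice points in the half-open fundamental parallelepiped. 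Summing these rational functions over a triangulation and checking, via inclusion-exclusion on lower-dimensional cones (which lie in proper subspaces and for which the naive series still converges to the correct rational function on a common open set), that the result is independent of the triangulation gives a well-defined $f(K,\z)$ for pointed cones. For a general pointed rational polyhedron $P$, set $f(P,\z) := \sum_{v \in \vert(P)} \z^v f(\fcone(P,v),\z)$; this is forced if $\mathfrak F$ is to be a valuation annihilating polyhedra with lines, by applying the desired valuation to \eqref{equ:tcone} after translating each tangent cone to the origin.

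The order of steps I would carry out: (1) establish the explicit rational-function formula and open domain of convergence for a single simplicial cone; (2) define $f(K,\z)$ for an arbitrary pointed rational cone by triangulation and prove well-definedness using inclusion-exclusion, noting that on the intersection of the relevant open sets all the naive series converge simultaneously so the identity of rational functions follows from an identity of convergent analytic functions; (3) define $f(P,\z)$ for pointed rational $P$ by the vertex/feasible-cone formula above and prove property (ii)--(iii) by showing $\sum_{v} [\tcone(P,v)] \equiv [P]$ modulo polyhedra with lines (this is \eqref{equ:tcone}) implies, after applying the not-yet-constructed valuation, that the series for $P$ agrees with $\sum_v \z^v f(\fcone(P,v),\z)$ wherever it converges; (4) extend $\mathfrak F$ to all rational polyhedra and verify it is a valuation --- i.e., that $\mathfrak F$ is additive with respect to the relations generating the algebra of polyhedra --- by checking it on the generating relations $[P_1 \cup P_2] + [P_1 \cap P_2] = [P_1] + [P_2]$ and on translates/subdivisions, reducing everything to cones via \eqref{equ:tcone}; (5) set $f(P,\z) = 0$ when $P$ contains a line and check consistency, using that a polyhedron with a line contributes $[K_P] \equiv 0$-type relations or, more directly, that the series $\sum_{\alpha \in P \cap \Z^D} \z^\alpha$ has no open domain of convergence and the valuation must kill it by Lemma \ref{lem:equvipolar}-type reasoning.

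I expect the main obstacle to be step (4): proving that the assignment is genuinely a valuation, i.e., well-defined and additive on the full algebra of polyhedra, rather than just a formula attached to pointed polyhedra. The subtlety is that the algebra of rational polyhedra is generated by indicator functions subject to many linear relations, and one must show $\mathfrak F$ descends through all of them; the standard device is to reduce an arbitrary relation $\sum_i c_i [P_i] \equiv 0$ to a relation among cone indicator functions using \eqref{equ:tcone}, then further to simplicial cones using triangulation, and finally to observe that on a suitable common open subset of $\C^D$ all the corresponding series converge absolutely, so the relation among rational functions is implied by the (obvious) relation among the convergent series. Handling the convergence bookkeeping --- ensuring one can always find a single nonempty open set on which every series in a given finite relation converges --- and the passage between "modulo polyhedra with lines" and "modulo polyhedra in proper subspaces" via Lemma \ref{lem:equvipolar} is where the real work lies; the cone computation in steps (1)--(2) is essentially the classical generating-function identity for simplicial cones and should be routine.
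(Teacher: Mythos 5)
First, note that the paper does not prove this statement at all: it is quoted as background, with the proof deferred entirely to Theorem 3.1 of \cite{BarviPom} (due in substance to Lawrence and to Khovanskii--Pukhlikov). So your proposal has to be measured against that literature proof rather than against anything in this article. Your overall outline (simplicial cones via the fundamental parallelepiped, triangulation of pointed cones, reduction of polyhedra to their vertex cones, extension by linearity to the algebra of polyhedra) is the standard skeleton, and steps (1)--(3) are essentially right, up to one small slip: the formula $f(P,\z)=\sum_{v\in\vert(P)}\z^v f(\fcone(P,v),\z)$ only makes sense when the vertices are lattice points; for a general rational polyhedron you must work with $f(\tcone(P,v),\z)$ directly, since $\z^v$ is not a Laurent monomial for non-integral $v$ (this is exactly why the paper separates Theorem \ref{brion} from Corollary \ref{cor:intepoly}).

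The genuine gap is in your step (4)/(5), and it is not just ``bookkeeping.'' Your mechanism for proving the valuation property is to reduce any relation $\sum_i c_i[P_i]=0$ to simplicial cones and then find a single nonempty open subset of $\C^D$ on which all the corresponding series converge absolutely. Such a set does not exist in general: already in dimension one the exact identity $[\R_{\ge 0}]+[\R_{\le 0}]-[\{0\}]=[\R]$ involves series converging only on $|z|<1$ and only on $|z|>1$, with empty intersection, and the series for $\R$ converges nowhere. Yet this is precisely the kind of relation that forces property (iv), $f(\R,z)=\frac{1}{1-z}+\frac{1}{1-z^{-1}}-1=0$; so (iv) and the valuation property cannot be obtained as consequences of simultaneous convergence, nor can Lemma \ref{lem:equvipolar}-type duality substitute for them, since that lemma is itself downstream of the valuation one is trying to build. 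The missing idea is the actual device of Lawrence/Khovanskii--Pukhlikov used in \cite{BarviPom}: one shows that the assignment, defined on pointed polyhedra by analytic continuation of the series, extends linearly across relations with incompatible recession cones by an algebraic argument (grouping by recession behavior / clearing denominators and comparing expansions), not by a common domain of convergence. As written, your plan would prove the theorem only for families of polyhedra whose recession cones fit inside a common pointed cone, which is strictly weaker than the statement and in particular never yields (iv).
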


Using this valuation property and Equation \eqref{equ:tcone}, we immediately have Brion's theorem:

\begin{thm}[Brion, 1988;  Lawrence, 1991]
	%(see \cite{BarviPom,beckhassesottile} for proofs) 
\label{brion} Let $P$ be a rational polyhedron. Then, considered as rational functions,
$$f(P,\z) = \sum_{v \in \vert(P)} f(\tcone(P,v),\z).$$
\end{thm}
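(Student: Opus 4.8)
The plan is to derive this statement directly from the indicator-function identity \eqref{equ:tcone} by applying the multivariate generating function map $\mathfrak F$ from Theorem \ref{thm:rational}. The key observation is that $\mathfrak F$ is a valuation, hence extends to a linear functional on the algebra of polyhedra, and by part (iv) of Theorem \ref{thm:rational} it annihilates the indicator function of every polyhedron containing a line; therefore any congruence ``modulo polyhedra with lines'' becomes a genuine equality of rational functions once $\mathfrak F$ is applied.

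First I would dispose of the degenerate cases. If $P$ is empty both sides are $0$. If $P$ is nonempty but contains a line, then $P$ has no vertices, so $\sum_{v \in \vert(P)} f(\tcone(P,v),\z)$ is an empty sum equal to $0$, while $f(P,\z) = 0$ by part (iv) of Theorem \ref{thm:rational}; the two sides agree. Now assume $P$ is a nonempty rational polyhedron without lines. Then \eqref{equ:tcone} says that
\[ [P] - \sum_{v \in \vert(P)} [\tcone(P,v)] = \sum_j c_j [Q_j] \]
for some finite collection of integers $c_j$ and polyhedra $Q_j$ each containing a line. Since each $\tcone(P,v)$ and each $Q_j$ is a rational polyhedron, $\mathfrak F$ applies to every term; applying it and using linearity (part (i)) together with $f(Q_j,\z) = 0$ (part (iv)) gives
\[ f(P,\z) - \sum_{v \in \vert(P)} f(\tcone(P,v),\z) = \sum_j c_j f(Q_j,\z) = 0, \]
which is exactly the claimed identity.

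The only genuine subtlety, and the step I would be most careful to state precisely, is that a valuation in the sense of part (i) of Theorem \ref{thm:rational} does extend to a well-defined linear map on the algebra generated by indicator functions of (rational) polyhedra, so that it may be applied term by term to the congruence \eqref{equ:tcone}. This is standard — it is precisely the algebra-of-polyhedra formalism of \cite{BarviPom} recalled just before Theorem \ref{thm:rational} — and I would cite it rather than reprove it. Everything else is bookkeeping: the sum over $\vert(P)$ is finite because a polyhedron has finitely many vertices, and each tangent cone is rational so its generating function is the one produced by $\mathfrak F$.
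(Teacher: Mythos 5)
Your argument is exactly the paper's own (which is stated in one line just before the theorem): apply the valuation $\mathfrak F$ of Theorem \ref{thm:rational} to the identity \eqref{equ:tcone}, using linearity and the fact that $\mathfrak F$ vanishes on polyhedra containing a line. Your write-up just spells out the degenerate cases and the valuation-extension step more explicitly; the approach is the same.
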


\begin{cor}\label{cor:intepoly} If $P$ an {integral polyhedron}, %i.e., all the vertices of $P$ are lattice points, 
then
\begin{equation}\label{equ:mgfintepoly}
	f(P,\z) = \sum_{v \in \vert(P)} \z^v f(\fcone(P,v),\z).
\end{equation}
Hence, for any positive integer $t,$
\begin{equation}\label{equ:mgfintedialpoly}
f(t P,\z) = \sum_{v \in \vert(P)} \z^{t v} f(\fcone(P,v),\z).
\end{equation}
\end{cor}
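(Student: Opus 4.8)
The plan is to deduce Corollary~\ref{cor:intepoly} directly from Brion's Theorem~\ref{brion} together with the behaviour of the MGF valuation under translation. First I would record the elementary translation identity: for any rational polyhedron $Q$ and any lattice point $v \in \Z^D$, one has $f(v+Q,\z) = \z^v \, f(Q,\z)$ as rational functions. On the level of the absolutely convergent series this is immediate since $\sum_{\alpha \in (v+Q)\cap\Z^D}\z^\alpha = \z^v\sum_{\beta \in Q\cap\Z^D}\z^\beta$ (the map $\alpha\mapsto\alpha-v$ is a bijection $(v+Q)\cap\Z^D \to Q\cap\Z^D$ because $v$ is integral), and then it holds as rational functions by the uniqueness in Theorem~\ref{thm:rational} together with part~(iii). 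One should also note the degenerate case: if $Q$ contains a line then so does $v+Q$, and both sides are $0$ by part~(iv), so the identity is unconditional.

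Next I would apply this with $Q = \fcone(P,v)$ for each vertex $v$ of $P$. Since $P$ is integral, each vertex $v$ lies in $\Z^D$, and by Definition~\ref{defn:vertcone} the tangent cone satisfies $\tcone(P,v) = v + \fcone(P,v)$; hence $f(\tcone(P,v),\z) = \z^v f(\fcone(P,v),\z)$. Substituting this into Brion's formula $f(P,\z) = \sum_{v\in\vert(P)} f(\tcone(P,v),\z)$ yields Equation~\eqref{equ:mgfintepoly} immediately. For Equation~\eqref{equ:mgfintedialpoly}, I would observe that for a positive integer $t$ the dilation $tP$ is again an integral polyhedron whose vertices are exactly $\{tv : v\in\vert(P)\}$, and whose feasible cone at $tv$ equals $\fcone(P,v)$ (dilating by a positive scalar does not change the cone of feasible directions). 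Applying \eqref{equ:mgfintepoly} to $tP$ then gives $f(tP,\z) = \sum_{v\in\vert(P)} \z^{tv} f(\fcone(P,v),\z)$, as claimed.

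There is essentially no serious obstacle here; the only point requiring a little care is the passage from the convergent-series identity to the identity of rational functions, which is handled by invoking the uniqueness clause of Theorem~\ref{thm:rational} and the fact (guaranteed there for pointed polyhedra) that the relevant series converges on a nonempty open set — and by separately treating the non-pointed case where everything vanishes. One should also make sure, when citing ``vertices of $tP$ are $tv$'' and ``$\fcone(tP,tv)=\fcone(P,v)$'', to note that these are standard consequences of the fact that $\x\mapsto t\x$ is a linear automorphism of $\R^D$ preserving the face structure; since the paper assumes familiarity with basic polytope theory (\cite{grunbaum,zie}), these can be stated without proof. Thus the corollary follows formally from Brion's theorem and the translation-equivariance of $f(-,\z)$.
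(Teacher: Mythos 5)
Your proof is correct and follows exactly the route the paper intends: the corollary is stated as an immediate consequence of Brion's Theorem~\ref{brion}, using the translation identity $f(v+Q,\z)=\z^v f(Q,\z)$ for an integral $v$ (valid by Theorem~\ref{thm:rational}), the relation $\tcone(P,v)=v+\fcone(P,v)$ from Definition~\ref{defn:vertcone}, and the standard facts that $tP$ is integral with vertex set $\{tv: v\in\vert(P)\}$ and unchanged feasible cones. The paper gives no separate proof, so your argument supplies precisely the omitted verification.
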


\begin{ex}
Let $P$ be the integral interval $[2, 5],$ which is a $1$-dimensional polytope whose vertices are $v_1 = 2$ and $v_2 = 5.$

The feasible cone of $P$ at $v_1$ is just the nonnegative $x$-axis: $\fcone(P, v_1) = \{ x : x \ge 0\}.$ Thus, the lattice points in $\fcone(P,v_1)$ are all the nonnegative integers. Hence,
\[ f(\fcone(P,v_1), z) = z^0 + z^1 + z^2 + \cdots  \frac{1}{1-z},\]
where the second equality holds for $|z| < 1.$
Similarly, we can get
\[ f(\fcone(P,v_2), z) = z^0 + z^{-1} + z^{-2} + \cdots = \frac{1}{1-z^{-1}} =\frac{z}{z-1},\]
where the second equality holds for $|z| > 1.$
Brion theorem states that considered as rational functions, we have
\begin{align*}
	f(P, z) =& z^{v_1} f(\fcone(P,v_1), z) + z^{v_2} f(\fcone(P,v_1), z) \\
	=& z^2 \ \frac{1}{1-z} + z^5 \ \frac{z}{z-1} = \frac{z^2 - z^6}{1-z} = z^2 + z^3 + z^4+z^5,
\end{align*}
which agrees with what one would get by computing $f(P,z)$ directly using Definition \ref{defn:mgf}.
\end{ex}

By Theorem \ref{brion}, the problem of finding the MGF of a rational polyhedron $P$ is reduced to finding the MGF of each of its tangent cone. When $P$ is integral, the problem is reduced further: by Corollary \ref{cor:intepoly}, it suffices to find the formulas for the MGF of the feasible cone of each vertex of $P.$

\subsection{MGFs of unimodular cones}
In general, one cannot calculate the MGF of a cone just by reading its generating rays. If a cone is not {\it simple}, i.e., the number of rays that generate the cone is larger than the dimension of the cone, one usually has to triangulate the cone into simple cones first. Even if a cone is simple, it is usually impossible to calculate its MGF directly from its generating rays. However, it can be done when the cone is {\it unimodular}.  
A pointed cone $K$ in $\R^D$ generated by the rays $\{r_i\}_{1 \le i \le d}$ is {\it unimodular} if $r_i$'s form a $\Z$-basis of the lattice $\Z^D \cap \Span(K)$.
\begin{lem}[Lemma 4.1 in \cite{BarviPom}] \label{lem:unicone}
Suppose $K$ is a unimodular cone generated by the rays $\{r_i\}_{1 \le i \le d}$. Then
$$f(K,\z) = \prod_{i=1}^d \frac{1}{1 - \z^{r_i}}.$$
\end{lem}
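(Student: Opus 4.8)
The plan is to prove this directly, first establishing absolute convergence of the defining series in a suitable open region, then summing the series explicitly, and finally invoking Theorem \ref{thm:rational}(iii) to conclude that the resulting rational function is the one assigned by $\mathfrak F$. Since $K$ is unimodular, the rays $r_1, \dots, r_d$ form a $\Z$-basis of $L := \Z^D \cap \Span(K)$, and every lattice point of $K$ is uniquely of the form $\alpha = \sum_{i=1}^d a_i r_i$ with $a_i \in \Z_{\ge 0}$. First I would choose $\z \in \C^D$ lying in the region where $|\z^{r_i}| < 1$ for all $i$ simultaneously — this region is nonempty and open (for instance, one can pick $\z$ with each $z_j = e^{-c_j}$ for a suitable vector $c$ in the interior of the dual cone $K^\circ \cap \Span(K)^*$ relative to $\Span(K)$, so that $\langle c, r_i\rangle > 0$). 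On this region, using the unique representation above,
\[
\sum_{\alpha \in K \cap \Z^D} \z^\alpha \;=\; \sum_{(a_1,\dots,a_d) \in \Z_{\ge 0}^d} \z^{\sum_i a_i r_i} \;=\; \sum_{(a_1,\dots,a_d) \in \Z_{\ge 0}^d} \prod_{i=1}^d (\z^{r_i})^{a_i} \;=\; \prod_{i=1}^d \sum_{a_i=0}^\infty (\z^{r_i})^{a_i} \;=\; \prod_{i=1}^d \frac{1}{1 - \z^{r_i}},
\]
where the interchange of sum and product, and the factorization into geometric series, are justified by absolute convergence (Fubini/Tonelli for the nonnegative series $\sum |\z|^\alpha$).

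The one subtlety to address carefully is that a point $\alpha$ of the ambient lattice $\Z^D$ might lie in $K$ without lying in $L$; but this cannot happen, since $\alpha \in K \subseteq \Span(K)$ forces $\alpha \in \Z^D \cap \Span(K) = L$, and then the $\Z$-basis property gives the unique expansion with integer — and, because $\alpha \in K$, nonnegative — coefficients. (Here one uses that $K$ is pointed so that $\alpha \in K$ with $\alpha = \sum a_i r_i$ indeed forces $a_i \ge 0$: the $r_i$ span the extreme rays, and any point of a simplicial pointed cone has nonnegative coordinates in the ray basis.) Having identified the series with $\prod_i (1-\z^{r_i})^{-1}$ on a nonempty open set, Theorem \ref{thm:rational}(ii)–(iii) tells us that $f(K,\z)$ — the rational function assigned by $\mathfrak F$ — agrees with $\sum_{\alpha \in K \cap \Z^D} \z^\alpha$ wherever the latter converges, hence $f(K,\z) = \prod_{i=1}^d (1-\z^{r_i})^{-1}$ as rational functions.

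The main obstacle, such as it is, is purely bookkeeping: verifying that the region of absolute convergence is genuinely nonempty and open, which amounts to observing that the dual cone $K^\circ$ has nonempty interior relative to $\Span(K)^*$ because $K$ is pointed and full-dimensional within its span. Everything else is the elementary manipulation of geometric series together with citing Theorem \ref{thm:rational}. If one prefers, the nonemptiness of $U_K$ is already guaranteed by Theorem \ref{thm:rational}(ii) applied to the pointed polyhedron $K$, so one can shortcut even this step.
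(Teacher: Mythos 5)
The paper does not actually prove this lemma itself; it cites Lemma~4.1 of Barvinok--Pommersheim, and your argument is exactly the standard one behind that citation (unique $\Z_{\ge 0}$-expansion of each lattice point of $K$ in the unimodular ray basis, geometric series, and Theorem~\ref{thm:rational}(iii)), so there is nothing substantive to fault. One small sign slip in the parenthetical construction of the open region: with this paper's convention $K^\circ = \{ \y \mid \y(\x) \le 0 \text{ for all } \x \in K\}$, taking $c$ in the interior of $K^\circ$ gives $\langle c, r_i \rangle < 0$ for every nonzero $r_i \in K$, which under your choice $z_j = e^{-c_j}$ would force $|\z^{r_i}| > 1$, the wrong direction; you want either $z_j = e^{c_j}$ with $c$ in the interior of $K^\circ$, or $c$ in the interior of $-K^\circ$. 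As you observe yourself, the point is moot in any case, since Theorem~\ref{thm:rational}(ii) already supplies a nonempty open region $U_K$ of absolute convergence for the pointed polyhedron $K$.
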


%Because computing the MGF of a unimodular cone is easy, the common method of computing the MGF of a cone is to express the indicator function of the cone as signed sum of indicator functions of a set of unimodular cones. 
Because computing the MGF of a unimodular cone is easy, it is easy to compute the MGF of an integral polyhedron/polytope whose feasible cones are all unimodular. Therefore, we give the following definition.

\begin{defn}\label{defn:totuni}
	A polytope $P \subset \R^D$ is {\it totally unimodular} if every vertex of $P$ is a lattice point and every feasible cone of $P$ is unimodular.
\end{defn}

\begin{cor}\label{cor:totuni}
Suppose $P \subset \R^D$ is a totally unimodular polytope. Then 
\[ f(P, \z) = \sum_{v \in \vert(P)} \z^v \prod_{i=1}^d \frac{1}{1- \z^{r_{v,i}}},\]
where $r_{v,1}, \dots, r_{v,d}$ are the generating rays of the vertex $v.$
\end{cor}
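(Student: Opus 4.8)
The plan is to deduce this directly from the two facts already in place: the Brion-type identity for integral polyhedra in Corollary \ref{cor:intepoly} and the evaluation of the MGF of a unimodular cone in Lemma \ref{lem:unicone}.

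First I would observe that, by Definition \ref{defn:totuni}, a totally unimodular polytope has all its vertices in $\Z^D$, so $P$ is in particular an integral polytope. Therefore Corollary \ref{cor:intepoly}, and specifically Equation \eqref{equ:mgfintepoly}, applies and gives, as an identity of rational functions,
\[ f(P,\z) = \sum_{v \in \vert(P)} \z^v\, f(\fcone(P,v),\z). \]

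Next, fix $v \in \vert(P)$ and set $d = \dim P$. Since $P$ is bounded, $\fcone(P,v)$ contains no line, so it is a pointed cone; and by Definition \ref{defn:totuni} it is unimodular, hence simple, so it is generated by exactly $d$ rays $r_{v,1}, \dots, r_{v,d}$ which form a $\Z$-basis of $\Z^D \cap \Span(\fcone(P,v))$. Lemma \ref{lem:unicone} then yields
\[ f(\fcone(P,v),\z) = \prod_{i=1}^d \frac{1}{1 - \z^{r_{v,i}}}. \]
Substituting this expression for each vertex $v$ into the previous display produces exactly the asserted formula.

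There is no real obstacle here: the corollary is a formal consequence of results proved earlier in the section, and the only thing to check is the bookkeeping in the paragraph above, namely that ``totally unimodular'' supplies precisely the two hypotheses --- integrality of $P$, and unimodularity (hence pointedness and simplicity) of each $\fcone(P,v)$ --- needed to chain Corollary \ref{cor:intepoly} and Lemma \ref{lem:unicone} together.
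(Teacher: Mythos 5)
Your proof is correct and follows exactly the route the paper takes: the paper's own proof is the one-line "It follows from Corollary \ref{cor:intepoly} and Lemma \ref{lem:unicone}," and you have simply spelled out the bookkeeping that makes that chain of implications go through.
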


\begin{proof}
	It follows from Corollary \ref{cor:intepoly} and Lemma \ref{lem:unicone}.
\end{proof}

\subsection{Obtaining volume and Ehrhart polynomial from MGF} Barvinok gives a polynomial algorithm to express a simple cone as signed sum of unimodular cones \cite[Theorem 4.2]{BarviPom}. 
Applying this algorithm to each simple cone in a triangulation of a cone, one can get the MGF of the cone. Using this idea and Theorem \ref{brion}, Barvinok \cite[Theorem 4.4]{BarviPom} shows that for any $P \subset \R^D$ a $d$-dimensional rational polyhedron, the multivariate generating function of $P$ is in the form of
\begin{equation}\label{equ:mgf}
f(P, \z) = \sum_{i} \epsilon_i
\frac{\z^{v_i}}{(1-\z^{r_{i,1}})\cdots(1-\z^{r_{i,d}})},
\end{equation}
 where $\epsilon_i = \{-1, 1\},$ $v_i,r_{i,1}, \dots, r_{i,d} \in \Z^D$, 
the $v_i$'s are all vertices (with multiple occurrences) of $P,$
and $\Cone(r_{i,1}, \dots, r_{i,d})$ is unimodular, for each $i.$

Because the above formula is obtained via Theorem \ref{brion}, by Corollary \ref{cor:intepoly}, if $P$ is an integral polytope, we only need to replace $v_i$ with $t v_i$ in \eqref{equ:mgf} to get the MGF of its dilation $tP$. Then one can use the residue calculation showed in \cite{BarviPom} to find the Ehrhart polynoial and the volume of $P.$

\begin{lem}[Lemma 5.4 in \cite{birkhoff}]\label{lem:mgf2volehr}
	Suppose $P \subset \R^D$ a $d$-dimensional integral polytope and the MGF of $P$ is in the form of \eqref{equ:mgf}. Then for any choice of $c \in \R^D$ such that $\langle c, r_{i,j} \rangle \neq 0$ for each $i$ and $j,$ the Ehrhart polynomial of $P$ is
\begin{equation}\label{equ:ehrhart}
i(P,t) = \sum_{k=0}^d \frac{t^k}{k!}\sum_i \frac{\epsilon_i}{\prod_{j=1}^d \langle c , r_{i,j} \rangle} {(\langle c, v_i \rangle)^k} \td_{d-k}(\langle c , r_{i,1} \rangle, \dots, \langle c , r_{i,d} \rangle),
\end{equation}
where $\td_{d-k}( \ )$ is a Todd polynomial. (See \cite{BarviPom} or \cite{birkhoff} for definition of Todd polynomials.)

In particular, the normalized volume of $P$ is
\begin{equation}\label{equ:vol}
\vol(P) = \frac{1}{d!}\sum_i \epsilon_i \frac{(\langle c, v_i \rangle)^d}{\prod_{j=1}^d \langle c , r_{i,j} \rangle} .
\end{equation}
\end{lem}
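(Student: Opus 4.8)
The plan is to recover $i(P,t)$ as a limiting value of the rational function $f(tP,\z)$ and then extract coefficients. By Corollary~\ref{cor:intepoly}, replacing each $v_i$ by $tv_i$ in \eqref{equ:mgf} gives $f(tP,\z)=\sum_i\epsilon_i\,\z^{tv_i}/\prod_{j=1}^d(1-\z^{r_{i,j}})$ as rational functions; on the other hand $f(tP,\z)$ is the honest Laurent polynomial $\sum_{\alpha\in tP\cap\Z^D}\z^\alpha$, which is regular at $\z=\1$ with value $i(P,t)$. Now fix $c\in\R^D$ with $\langle c,r_{i,j}\rangle\neq 0$ for all $i,j$ and substitute $z_\ell=e^{\tau c_\ell}$, so that $\z^{r_{i,j}}=e^{\tau\langle c,r_{i,j}\rangle}$ and $\z^{tv_i}=e^{\tau t\langle c,v_i\rangle}$. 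Then $\tau\mapsto f(tP,e^{\tau c})$ is an entire function of $\tau$ (a finite sum of exponentials) with value $i(P,t)$ at $\tau=0$, while on a punctured neighbourhood of $0$ it equals the sum $\sum_i\epsilon_i e^{\tau t\langle c,v_i\rangle}/\prod_j(1-e^{\tau\langle c,r_{i,j}\rangle})$, each term of which is meromorphic at $\tau=0$ with a pole of order at most $d$ — here the genericity of $c$ is exactly what guarantees no denominator is identically singular. Since the total sum has a removable singularity at $\tau=0$, the value $i(P,t)$ is the coefficient of $\tau^0$ in the Laurent expansion of that sum about $\tau=0$, and this may be computed term by term.

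The second step is the termwise expansion. For the numerator, $e^{\tau t\langle c,v_i\rangle}=\sum_{n\ge 0}\frac{t^n\langle c,v_i\rangle^n}{n!}\tau^n$. For each denominator factor, $\frac{1}{1-e^{\tau\langle c,r_{i,j}\rangle}}$ has a simple pole at $\tau=0$ and equals $\tau^{-1}$ times (a sign times) the single-variable Todd series evaluated at $\tau\langle c,r_{i,j}\rangle$; multiplying the $d$ such expansions and collecting by total degree yields, directly from the generating-function definition of the multivariate Todd polynomials $\td_m$, an expansion of $\prod_{j=1}^d\frac{1}{1-e^{\tau\langle c,r_{i,j}\rangle}}$ of the form $\frac{1}{\prod_j\langle c,r_{i,j}\rangle}\sum_{m\ge 0}\tau^{m-d}\,(\pm)\,\td_m(\langle c,r_{i,1}\rangle,\dots,\langle c,r_{i,d}\rangle)$. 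Multiplying the numerator and denominator expansions, the coefficient of $\tau^0$ comes from the terms with $n+(m-d)=0$, i.e.\ $m=d-n$; writing $k=n$ and summing over $i$ produces \eqref{equ:ehrhart}.

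Finally, \eqref{equ:vol} is read off from \eqref{equ:ehrhart}: the normalized volume of $P$ is, by Ehrhart's theorem quoted in the text, the leading ($t^d$) coefficient of the degree-$d$ polynomial $i(P,t)$; in \eqref{equ:ehrhart} only the $k=d$ summand carries a $t^d$, and there $\td_{d-k}=\td_0=1$, with $\frac{1}{k!}=\frac{1}{d!}$, which is exactly \eqref{equ:vol}.

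The step demanding the most care is the passage to the limit in the first paragraph: the individual summands $\epsilon_i e^{\tau t\langle c,v_i\rangle}/\prod_j(1-e^{\tau\langle c,r_{i,j}\rangle})$ each blow up at $\tau=0$, so it is the cancellation of all negative powers of $\tau$ in the total — equivalently, the identity between the rational function \eqref{equ:mgf} specialized to $tP$ and the Laurent polynomial $f(tP,\z)$, which holds as rational functions and hence survives the exponential substitution on a suitable domain — that makes ``the coefficient of $\tau^0$'' a legitimate object to compute termwise. The remaining delicacy is purely bookkeeping: multiplying out a $d$-fold product of Laurent series against the $d$-fold exponential series and matching it to the combinatorial definition of $\td_m$, which is also where the precise signs (and the choice of sign convention for the Todd polynomials used in \cite{birkhoff}) are pinned down. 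Everything else is routine.
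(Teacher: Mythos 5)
Your proof is correct and follows precisely the route the paper indicates: the lemma is quoted from \cite{birkhoff}, and the surrounding text directs one to dilate (replace $v_i$ by $t v_i$ via Corollary \ref{cor:intepoly}) and then run the Barvinok--Pommersheim residue calculation, which is exactly your exponential substitution $\z = e^{\tau c}$ followed by extraction of the $\tau^0$-coefficient via the Todd expansion. The sign issue you flag is genuine but is pinned down once one fixes the Todd-polynomial convention used in \cite{birkhoff}, and the remainder is as routine as you say.
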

Hence, the problem of finding formulas for the volume and Ehrhart polynomial of an integral polytope is reduced to finding the formula for its MGF. Therefore, our paper will be mainly focused on computing the MGFs of polytopes.

\section{A perturbation method}\label{sec:perturb}
When calculating the MGF of a polytope/polyhedron which has non-simple feasible cones, we usually triangulate those non-simple feasible cones into simple cones, and then apply various algorithms \cite[Chapter 16]{barvinokIntPts} for computing MGFs of simple cones to find the final formula. 
In this section, we describe a perturbation method that can be used to replace the triangulation step in the above procedure.
The method is particularly useful when the perturbed polytopes are totally unimodular, in which case instead of using other algorithms, it suffices to use Lemma \ref{lem:unicone} or Corollary \ref{cor:totuni} to obtain the MGFs.

%We then apply this method to transportation polytopes. Because the constraint matrix $A_{m,n}$ of transportation polytopes is totally unimodular, the simple cones we obtain from the perturbation are all unimodular. Hence, instead of using other algorithms, it suffices to use Corollary \ref{lem:unicone} to obtain the MGFs.

\begin{thm}\label{thm:func}
Let $A$ be an $N \times D$ matrix and $\b \in \R^N.$ 
Suppose $P$ is a non-empty polyhedron in $\R^D$ defined by $A \x \le \b$ and $\b(t)$ is a continuous function on some interval containing $0$ such that $\b(0)=\b.$ Suppose for each $t \neq 0$ in the interval, $\b(t)$ defines a non-empty polyhedron $P(t) = \{ \x \ | \ A \x \le \b(t)\}$ with exactly $\ell$ vertices: $w_{t,1}, \dots, w_{t, \ell},$ and the feasible cone of $P(t)$ at $w_{t,j}$ does not depend on $t,$ that is, for each $j: 1 \le j \le \ell,$ there exists a fixed cone $K_j$ such that $\fcone(P(t), w_{t,j}) = K_j$ for all $t \neq 0.$ Then we have the following:
\begin{ilist}
	\itm For each fixed $j: 1 \le j \le \ell,$ the set of vertices $\{ w_{t,j} \}$ converges to some vertex of $P$ as $t$ goes to $0.$
	\itm	For each vertex $v \in \vert(P)$, let $J_v$ be the set of $j$'s where $\{ w_{t,j} \}$ converges to $v.$ Then
	\[ [ \fcone(P, v) ] \equiv \sum_{j \in J_v} [ K_j ] \text{ modulo polyhedra with lines.} \]
	Therefore,
	\[ f(\fcone(P, v), \z) = \sum_{j \in J_v} f(K_j, \z). \] 

\end{ilist}
\end{thm}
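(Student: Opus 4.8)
The plan is to establish (i) by a compactness argument on the sets of tight constraints, and then to derive (ii) by localizing the perturbation near a single vertex of $P$ and applying the recession-cone identity \eqref{equ:fcone}.

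\emph{Proof of (i).} Fix $j$. Since $P(t)$ has a vertex it contains no line, so its recession cone $\{\x : A\x \le 0\}$ is pointed and hence each $K_j = \fcone(P(t),w_{t,j})$ is pointed. For each $t \neq 0$ the vertex $w_{t,j}$ is the unique solution of the equations $\n_i(\x) = b_i(t)$ taken over its set of tight constraints $I_{t,j} \subseteq \{1,\dots,N\}$, and $A_{I_{t,j}}$ has rank $D$ (equivalently $K_j = \{\u : \n_i(\u)\le 0,\ i \in I_{t,j}\}$ is pointed). Given any sequence $t_n \to 0$, since $\{1,\dots,N\}$ has only finitely many subsets we may pass to a subsequence along which $I_{t_n,j}$ equals a fixed set $I$; then $w_{t_n,j}$ is a fixed left inverse of $A_I$ applied to $\b_I(t_n)$, so it converges to the solution $v^{(I)}$ of $A_I\x = \b_I$. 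Passing to the limit in $A w_{t_n,j} \le \b(t_n)$ gives $v^{(I)} \in P$, and since $\n_i(v^{(I)}) = b_i$ for $i \in I$ with $\rk(A_I) = D$, the cone $\fcone(P,v^{(I)}) \subseteq \{\u : \n_i(\u)\le 0,\ i \in I\}$ is pointed, so $v^{(I)}$ is a vertex of $P$. Finally, if $v$ and $v'$ are two subsequential limits, then from the corresponding fixed tight sets one gets $\fcone(P,v) \subseteq K_j$ and $\fcone(P,v') \subseteq K_j$; since $[v,v'] \subseteq P$, both $v' - v$ and $v - v'$ lie in $K_j$, and as $K_j$ is pointed this forces $v = v'$. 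Hence $\lim_{t\to 0} w_{t,j}$ exists and is a vertex of $P$.

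\emph{Proof of (ii).} Fix a vertex $v$ of $P$ and let $I_v = \{i : \n_i(v) = b_i\}$. Then $P_v := \{\x : \n_i(\x)\le b_i,\ i \in I_v\}$ equals the tangent cone $\tcone(P,v)$, and its recession cone is exactly $\fcone(P,v)$, which is pointed. Perturb along the same function by setting $P_v(t) := \{\x : \n_i(\x)\le b_i(t),\ i \in I_v\}$; its recession cone is $\{\u : \n_i(\u)\le 0,\ i \in I_v\} = \fcone(P,v)$, independent of $t$ and pointed, so $P_v(t)$ has no line and \eqref{equ:fcone} applies to it. The crux is to show that for all sufficiently small $t \neq 0$ the vertex set of $P_v(t)$ is exactly $\{w_{t,j} : j \in J_v\}$, with $\fcone(P_v(t),w_{t,j}) = K_j$. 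Pick $\epsilon_0 > 0$ less than the distance from $v$ to every other vertex of $P$ and small enough that the constraints $\n_i(\x)\le b_i$ with $i \notin I_v$, which are strict at $v$, stay strict on the open ball $B(v,\epsilon_0)$ for all small $|t|$; then $P(t) \cap B(v,\epsilon_0) = P_v(t) \cap B(v,\epsilon_0)$ for such $t$. Since being a vertex and the feasible cone there are local properties, the vertices of $P_v(t)$ in $B(v,\epsilon_0)$ coincide, with the same feasible cones, with those of $P(t)$ in $B(v,\epsilon_0)$, and by (i) the latter are precisely $\{w_{t,j} : j \in J_v\}$ once $t$ is small, with feasible cones $K_j$. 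On the other hand every vertex of $P_v(t)$ solves $\n_i(\x) = b_i(t)$ over some rank-$D$ subset $S \subseteq I_v$, and for each of the finitely many such $S$ this solution tends to $v$ as $t \to 0$; so for small $t$ no vertex of $P_v(t)$ lies outside $B(v,\epsilon_0)$. Combining these, $\vert(P_v(t)) = \{w_{t,j} : j \in J_v\}$. Applying \eqref{equ:fcone} to $P_v(t)$ and using that its recession cone equals $\fcone(P,v)$ gives $[\fcone(P,v)] \equiv \sum_{j \in J_v}[K_j]$ modulo polyhedra with lines; applying the valuation $f(\cdot,\z)$ of Theorem \ref{thm:rational}, which vanishes on polyhedra with lines, then yields $f(\fcone(P,v),\z) = \sum_{j\in J_v} f(K_j,\z)$.

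I expect the main obstacle to be this local identification of $\vert(P_v(t))$: one must simultaneously rule out any vertex of $P_v(t)$ escaping the small ball $B(v,\epsilon_0)$ and verify that inside that ball $P_v(t)$ is literally indistinguishable from $P(t)$, so that the fixed cones $K_j$ transfer verbatim. The compactness bookkeeping of part (i) — constant tight sets along subsequences and rank-$D$ submatrices — underlies both the convergence statement and this vertex-matching, while pointedness of the $K_j$ is used decisively both to force a unique limit in (i) and to guarantee that $P_v(t)$ has no lines in (ii).
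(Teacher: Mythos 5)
Your proof is correct, and it takes a genuinely different route from the paper in both parts. For part (i), the paper works in the polar picture: it shows $K_j^\circ \subseteq (\fcone(P,v_{t_1}))^\circ \cap (\fcone(P,v_{t_2}))^\circ$ and derives a contradiction from the fact that $K_j^\circ$ is full-dimensional while that intersection lies in a hyperplane. You instead stay in the primal picture, passing to a subsequence with a constant tight set $I$, concluding $\fcone(P,v^{(I)}) \subseteq K_j$, and then using convexity of $P$ and pointedness of $K_j$ to force uniqueness; this is a tidy compactness argument that also directly handles boundedness of $\{w_{t,j}\}$. For part (ii), the paper proves the more general Lemma \ref{lem:vertconv}(iii) via a global argument: it applies the dual identity \eqref{equ:dfcone} to both $P$ and $P(t)$ (they share a recession cone), matches the resulting full-dimensional polar cones group by group, and converts back through Lemma \ref{lem:equvipolar}. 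You instead localize to the tangent cone $P_v$ at the fixed vertex $v$, observe that $P_v(t)$ and $P(t)$ agree near $v$ while no vertex of $P_v(t)$ escapes a small ball, and then apply the primal identity \eqref{equ:fcone} directly to $P_v(t)$, bypassing polar cones and Lemma \ref{lem:equvipolar} entirely. Your localization makes the ``matching'' step transparent and self-contained, at the cost of relying on (i) (the convergence of the $w_{t,j}$, which uses the hypothesis that the $K_j$ are constant), whereas the paper's Lemma \ref{lem:vertconv} establishes the indicator identity without that hypothesis and is reused for its own sake.
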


One sees that Theorem \ref{thm:funcentire} follows directly from Theorem \ref{thm:func} and Corollary \ref{cor:intepoly}. 

The phenomenon described in Theorem \ref{thm:func} is similar to the one in Theorem 18.3 of \cite{barvinokIntPts}. However, the conditions given in these two theorems are different. (For example, the convergence of the vertices $w_{t,j}$ is a hypothesis in Theorem 18.3 of \cite{barvinokIntPts} but is a conclusion in our theorem.) One might be able to modify the proof of Theorem 18.3 of \cite{barvinokIntPts} to give a proof for part (ii) of Theorem \ref{thm:func} provided that (i) is proved. 
We also remark that one can deduce part (ii) of Theorem \ref{thm:func} from the proposition (on page 818) in Section 3.1 of \cite{brion-vergne3}. However, since the presentation we need is slightly different, we find it is easier to give an independent self-contained proof of the theorem in this article. 
%However, we give an independent proof in our paper. 
Before we prove Theorem \ref{thm:func}, we give an example demonstrating how this theorem (and Theorem \ref{thm:funcentire}) work and then give a preliminary lemma.

\begin{ex}[Example of Theorems and \ref{thm:funcentire} and \ref{thm:func}]  \label{ex:thmfunc}
Let 
\[ A = 
\begin{pmatrix} 
	1 & 0 \\
	-1 & 0 \\
	0 & 1 \\
	0 & -1 \\
	1 & -1 \\
	1 & 2 \\
	2 & 1 
\end{pmatrix}, \ 
\b = \begin{pmatrix}1 \\ 0 \\ 1 \\ 0 \\ 1 \\ 3 \\ 3 \end{pmatrix}, \
	\text{ and }
\b(t) = \begin{pmatrix}1 \\ t \\ 1 \\ 0 \\ 1 - 2t \\ 3 - 3t \\ 3 - 3t \end{pmatrix}, 0 \le t < 1/5. 
\]
Then $P = \{ \x \in \R^2 \ | \ A \x \le \b\}$ is just the unit square with vertices 
\[ v_1=(0,0), v_2=(0,1), v_3=(1,1) \text{ and } v_4=(1,0).\]
For any $t \in (0, 1/5),$ the polygon $P(t)$ has seven vertices 
\[	w_{t,1} = (t,0), \quad w_{t,2}=(t,1),\]
\[w_{t,3}=(1-3t, 1), \quad w_{t,4}=(1-t,1-t), \quad w_{t,5}=(1,1-3t),\]
\[w_{t,6}= (1, 2t), \quad w_{t,7}=(1-2t,0).\]
See Figure \ref{fig:exconvcube}. It shows polygons $P(3/20), P(1/10)$ and $P(1/20).$ One can see that as $t$ decreases, the polygons look more and more similar to the unit cube $P=P(0).$ In this example, as $t$ goes to $0,$ 
\[ w_{t,1} \to v_1, \quad w_{t,2} \to v_2, \quad w_{t,3},w_{t,4},w_{t,5} \to v_3, \quad w_{t,6}, w_{t,7} \to v_4.\]
Hence, \[ J_{v_1} = \{1\}, \quad J_{v_2} =\{2\}, \quad J_{v_3} =\{3,4,5\}, \quad J_{v_4} =\{6,7\}.\]
Note that the feasible cone of $P(t)$ at $w_{t,j}$ does not depend on $t$ (for $t \in (0, 1/5)$). Let $K_j := \fcone(P(t), w_{t,j}).$ By Theorem \ref{thm:func}, we have
\begin{align}
	{[} \fcone(P, v_1)] \equiv& \ [K_1], \quad \text{ modulo polyhedra with lines;} \label{equ:ex1} \\
	{[} \fcone(P, v_2)] \equiv& \ [K_2], \quad \text{ modulo polyhedra with lines;} \label{equ:ex2} \\
	{[} \fcone(P, v_3)] \equiv& \ [K_3]+[K_4]+[K_5], \quad \text{ modulo polyhedra with lines;} \label{equ:ex3} \\
	{[} \fcone(P, v_4)] \equiv& \ [K_6]+[K_7], \quad \text{ modulo polyhedra with lines.} \label{equ:ex4} 
\end{align}
We remark that Equations \eqref{equ:ex1} and \eqref{equ:ex2} are trivial, and actually hold with direct equalities (without modulo equivalence). Equations \eqref{equ:ex3} and \eqref{equ:ex4} are less trivial. 

Finally, we have the conclusion of Theorem \ref{thm:funcentire}:
\begin{align*}
	f(P,\z) =& \z^{v_1} f(K_1,\z) + \z^{v_2} f(K_2, \z) + \z^{v_3} \left( f(K_3,\z) + f(K_4,\z) + f(K_5), \z) \right) \\
	& + \z^{v_4} \left( f(K_6,\z) + f(K_7, \z) \right).
\end{align*}
\end{ex}

\begin{figure}
\centering
\scalebox{0.6}{\input{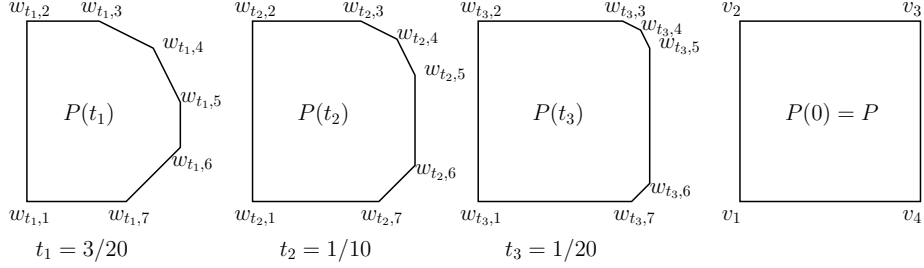}}
\caption{Example of polygons converging to the unit square}
\label{fig:exconvcube}
\end{figure}

We need the following lemma to prove Theorem \ref{thm:func}.
\begin{lem}\label{lem:vertconv}
Let $A$ be an $N \times D$ matrix with row vectors $\n_1, \dots, \n_N$ and $\b \in \R^N.$ 
Suppose $P$ is a non-empty polyhedron in $\R^D$ defined by $A \x \le \b$ and $\b(t)$ is a continuous function on some interval containing $0$ such that $\b(0)=\b.$ %$(c,0)$ or $(0,d)$ which converges to $\b$ as $t$ goes to $0.$ 
Let $P(t)$ be the polyhedron defined by the linear inequalities $A \x \le \b(t).$ Then we have the following:

\begin{ilist}
	\itm The vertex sets $\{ \vert(P(t)) \ | \ t \neq 0 \}$ {\textbf converge} to the vertex set $\vert(P)$ of $P;$ that is, for any $\epsilon > 0,$ there exists $\delta_\epsilon,$ such that for any $t: 0 < |t| < \delta_\epsilon,$ any vertex $w$ of $P(t),$ there exists a vertex $v$ of $P$ satisfying $|w - v| < \epsilon.$ 
	
	\itm Let $\epsilon$ be strictly smaller than one half of the minimum distances between two vertices of $P,$ and $t: 0 < |t| < \delta_\epsilon,$ where $\delta_\epsilon$ is defined as in {\rm (i)}. For any vertex $w$ of $P(t)$, the vertex $v$ of $P$ satisfying $|w-v|< \epsilon$ is unique, and such a pair of $w$ and $v$ satisfies
\begin{equation}\label{eq:Nsubset}
\cN(A, \b(t), w) \subseteq \cN(A, \b, v). 
\end{equation}
Hence, we have
\begin{equation}\label{eq:dfcsubset}
(\fcone(P(t), w))^\circ \subseteq (\fcone(P, v))^\circ.
\end{equation}
%and \begin{equation}\label{eq:fcsubset}
%\fcone(P(t), w) \supseteq \fcone(P, v).
%\end{equation}

\itm Choose $\epsilon$ and $t$ as in {\rm (ii)}. Suppose $P(t)$ is non-empty and has vertices. For any vertex $v \in \vert(P),$ let \[ W_{t,v} = \{ w \in \vert(P(t)) \ | \ |w-v| < \epsilon\}\] be the set of vertices of $P(t)$ that are within $\epsilon$-distance to $v.$
 Then
 \[ [ \fcone(P, v) ] \equiv \sum_{w \in W_{t,v}} [ \fcone(P(t), w) ] \text{ modulo polyhedra with lines.} \]
	Therefore,
	\[ f(\fcone(P, v), \z) = \sum_{w \in W_{t,v}} f(\fcone(P(t),w), \z). \] 

\end{ilist}

\end{lem}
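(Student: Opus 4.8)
The plan is to establish the three parts in turn; parts (i) and (ii) are continuity arguments about the finitely many ``basic solutions'' of the system, while part (iii) is the substantive step, obtained by localizing at the vertex $v$. For part (i), I would use that every vertex of a polyhedron $\{\x : A\x \le \b'\}$ is the unique solution $A_\sigma^{-1}\b'_\sigma$ of some subsystem indexed by a size-$D$ set $\sigma$ with $A_\sigma$ invertible (a \emph{basis}), and that there are only finitely many bases. For a basis $\sigma$ set $g_\sigma(t) = A_\sigma^{-1}\b(t)_\sigma$; this is continuous in $t$ since $\b(t)$ is, and $\vert(P(t)) \subseteq \{\,g_\sigma(t)\,\}_\sigma$ for every $t$. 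Split the bases: if $g_\sigma(0)\in P$ then $g_\sigma(0)$ is a vertex of $P$ and $g_\sigma(t)\to g_\sigma(0)$, so $|g_\sigma(t)-g_\sigma(0)|<\epsilon$ once $|t|$ is small; if $g_\sigma(0)\notin P$ then some $\n_j(g_\sigma(0))>b_j$, hence by continuity of $g_\sigma(t)$ and of $\b(t)$ also $\n_j(g_\sigma(t))>b_j(t)$ for small $|t|$, so $g_\sigma(t)\notin\vert(P(t))$. Taking $\delta_\epsilon$ to be the minimum of these finitely many thresholds proves (i).

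For part (ii), uniqueness of $v$ is the triangle inequality, given that $\epsilon$ is below half the minimal distance between distinct vertices of $P$. For \eqref{eq:Nsubset} I would prove the contrapositive: if $\n_i(v)<b_i$, then for $|w-v|<\epsilon$ and $|t|$ small enough we get $\n_i(w)<b_i(t)$ --- a finite list of continuity estimates comparing $\n_i(w)$ to $\n_i(v)$ and $b_i(t)$ to $b_i$ --- so $\n_i\notin\cN(A,\b(t),w)$; this may require shrinking $\delta_\epsilon$, which is harmless. Then \eqref{eq:dfcsubset} follows by applying $\Cone(\cdot)$ to \eqref{eq:Nsubset} and using \eqref{equ:gendfcone} for $P$ at $v$ and for $P(t)$ at $w$.

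For part (iii), the key idea is to pass to the \emph{local} polyhedron at $v$. Put $S = \cN(A,\b,v)$, $P_S = \{\x : \n_i(\x)\le b_i \text{ for } \n_i\in S\}$ and $P(t)_S = \{\x : \n_i(\x)\le b_i(t) \text{ for } \n_i\in S\}$. Since each $\n_i\in S$ satisfies $\n_i(v)=b_i$, the polyhedron $P_S$ is a cone with apex $v$ equal to $v+\fcone(P,v)$, so $\fcone(P,v)=K_{P_S}$; and because a recession cone depends only on the $\n_i$'s and not on the right-hand sides, $K_{P(t)_S}=K_{P_S}=\fcone(P,v)$. Now $P(t)_S\supseteq P(t)$ is non-empty and its recession cone $\fcone(P,v)$ is pointed (as $v$ is a vertex of $P$), so $P(t)_S$ contains no lines, and \eqref{equ:fcone} applied to $P(t)_S$ gives
\[
	[\,\fcone(P,v)\,] \;\equiv\; \sum_{w\in\vert(P(t)_S)} [\,\fcone(P(t)_S,w)\,] \quad\text{modulo polyhedra with lines.}
\]
It remains to check, by reusing the estimates of (i)--(ii) together with the fact that for small $|t|$ every constraint $\n_i\notin S$ is slack on an $\epsilon$-ball around $v$, that (a) $\vert(P(t)_S)=W_{t,v}$ and (b) $\fcone(P(t)_S,w)=\fcone(P(t),w)$ for each $w\in W_{t,v}$. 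This yields the indicator identity, and the identity for $f(\cdot,\z)$ then follows because $f(\cdot,\z)$ is a valuation vanishing on polyhedra with lines (Theorem \ref{thm:rational}(i),(iv)).

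I expect the main obstacle to be the bookkeeping in (iii): proving that the vertices of the globally perturbed $P(t)$ lying within $\epsilon$ of $v$ coincide with the vertices of the locally perturbed cone $P(t)_S$, and that their feasible cones agree. This forces one to choose $\epsilon$ and $\delta_\epsilon$ so that, simultaneously over the finitely many bases $\sigma\subseteq S$ and the finitely many inequalities, the irrelevant constraints stay slack and all relevant basic solutions stay within $\epsilon$ of $v$. An alternative, less self-contained route to (iii) is the polar picture: $K_{P(t)}=K_P$ forces $\sum_{v'}[(\fcone(P,v'))^\circ]\equiv\sum_w[(\fcone(P(t),w))^\circ]$ modulo polyhedra in proper subspaces via \eqref{equ:dfcone}; one then separates this vertex by vertex using that the normal cones $\Cone(\cN(A,\b,v'))$ have pairwise disjoint interiors, and concludes with Lemma \ref{lem:equvipolar}.
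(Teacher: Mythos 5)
Your proposal is correct, and it overlaps substantially with the paper's argument in (i) and (ii) while taking a genuinely different main route through (iii). For (i), both you and the paper enumerate the finitely many bases $\sigma$, observe $\vert(P(t))\subseteq\{g_\sigma(t)\}_\sigma$, and split into the cases $g_\sigma(0)\in\vert(P)$ (continuity gives the $\epsilon$-bound) versus $g_\sigma(0)\notin P$ (a violated constraint persists for small $|t|$, so $g_\sigma(t)$ is infeasible); this is essentially identical. For (ii), the paper argues at the level of bases: a tight constraint $\n_i$ at $w$ extends to a basis $M_j$ among the tight constraints, $w=M_j^{-1}\b^j(t)$, and by (i)'s argument $M_j^{-1}\b^j$ is the unique nearby vertex $v$, forcing $\n_i(v)=b_i$. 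This works for the \emph{same} $\delta_\epsilon$ chosen in (i). Your contrapositive-by-continuity argument is also valid, but the phrase ``this may require shrinking $\delta_\epsilon$, which is harmless'' is slightly imprecise: shrinking $\delta_\epsilon$ while keeping $\epsilon$ fixed does not by itself tighten the bound $|w-v|<\epsilon$; what you actually need is to re-apply (i) with a smaller threshold $\epsilon'$ (chosen below the minimum slack $(b_i-\n_i(v))/|\n_i|$ over all finitely many $(i,v)$ with $\n_i(v)<b_i$) and set $\delta_\epsilon:=\delta_{\epsilon'}$. With that bookkeeping made explicit your argument is fine, and the paper's basis-level argument is a way to sidestep the issue entirely.

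The interesting divergence is (iii). The paper uses precisely what you list as the ``alternative'' route: apply \eqref{equ:dfcone} to both $P$ and $P(t)$ (they share the recession cone), get a global identity of polars modulo proper subspaces, and then separate it vertex by vertex using the containment \eqref{eq:dfcsubset} together with the fact that the polar cones $(\fcone(P,v))^\circ$ over $v\in\vert(P)$ tile $K_P^\circ$ with lower-dimensional overlaps. That separation step is stated as ``Hence, we must have...'' in the paper without elaboration; it does require an argument (e.g.\ multiplying the global relation by $[(\fcone(P,v))^\circ]$ and observing that the cross-terms land in proper subspaces). Your main route --- localizing to $P(t)_S$ with $S=\cN(A,\b,v)$, observing $K_{P(t)_S}=K_{P_S}=\fcone(P,v)$ is pointed, applying \eqref{equ:fcone} to $P(t)_S$ directly, and then matching $\vert(P(t)_S)$ with $W_{t,v}$ and their feasible cones --- avoids the vertex-by-vertex separation entirely and is self-contained once (a) and (b) are verified; those verifications are exactly the estimates you flag (the constraints outside $S$ stay slack on a neighborhood of $v$, and the basic solutions of $S$ converge to $v$), and they go through subject to the same $\epsilon'$-shrinking remark as in (ii). Both approaches are correct; yours trades the indicator-separation step for the local bookkeeping (a),(b), and arguably exposes more clearly \emph{why} the identity localizes at each vertex.
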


\begin{proof}
Any $D$ linearly independent rows of $A$ form an invertible matrix. Let $M_1, \dots, M_k$ be all the invertible matrices arising this way. For each $j: 1 \le j \le k,$ let 
\[ I_j = \{\text{ the indices of the rows from $A$ that form matrix $M_j$} \},\]
and let $\b^j(t) = (b_i(t))_{i \in I_j}$ be the corresponding sub-vector of $\b(t)$ and let $\b^j = (b_i)_{i \in I_j}$ be the corresponding sub-vector of $\b.$ Note that $\b^j(t)$ is a continuous function where $\b^j(0) = \b^j.$
%that converges to $\b^j$ as $t$ goes to $0.$

Let $\epsilon > 0.$ For any $j: 1 \le j \le k,$ because $M_j^{-1}(\b^j(t))$ is a continuous function on $t,$
%converges to $M_j^{-1}(\b)$, 
there exists $\delta_j$ such that 
%for any $0 < t < \delta_j,$ we have $|M_j^{-1}(\b^j) - M_j^{-1}(\b^j(t))| < \epsilon.$
\begin{equation}\label{eq:deltaj}
0 < |t| < \delta_j \quad \Rightarrow \quad |M_j^{-1}(\b^j) - M_j^{-1}(\b^j(t))| < \epsilon.
\end{equation}

If $M_j^{-1}(\b^j)$ is not a vertex of $P,$ there exists $i \not\in I_j$ such that $\n_i (M_j^{-1}(\b^j)) > b_i.$ Because $\n_i (M_j^{-1} (\b^j (t)))$ is a continuous function on $t,$ 
%converges to $\n_i (M_j^{-1} (\b^j))$, 
there exists $\delta_j'$ such that 
%for any $0 < t < \delta_j',$ we have $\n_i (M_j^{-1}(\b^j(t))) > b_i.$ 
\begin{equation}\label{eq:deltaj'}
	0 < |t| < \delta_j' \quad \Rightarrow \quad \n_i (M_j^{-1}(\b^j(t))) > b_i.
\end{equation}

We choose 
\[\delta_\epsilon = \min(\{\delta_j \ | \ 1\le j \le k \}, \{\delta_j' \ | \ M_j^{-1}(\b^j) \not\in \vert(P)\}).\]

We claim for any $t: 0 < |t| < \delta_\epsilon,$ and any vertex $w$ of $P(t)$, there exists a vertex $v$ of $P$ such that $|w - v|< \epsilon.$ After all, since $w$ is a vertex of $P(t),$ for some (not necessarily unique) $j,$ we have that 
\begin{align}
	w =& \ M_j^{-1}(\b^j(t)) \\
	\n_i(M_j^{-1}(\b^j)) \le& \ b_i, \ \forall i \not\in I_j. \label{eq:contra}
\end{align}
Hence, $M_j^{-1}(\b^j)$ has to be a vertex of $P$ because otherwise $\delta_j'$ exists and \eqref{eq:deltaj'} contradicts with \eqref{eq:contra}.
Then our claim, i.e., (i), immediately follows from \eqref{eq:deltaj}.

We now prove (ii). %If we choose $\epsilon$ strictly smaller than one half of the minimum distances between two vertices of $P,$ 
The uniqueness choice of $v$ follows from the triangle inequality. 
%Finally, by our argument above, if $\n_i(\x) = b_i(t)$ is a supporting hyperplane for $w,$ one must have that $\n_i(\x) = b_i$ is a supporting hyperplane for $v.$ Therefore, we have \eqref{eq:Nsubset}.
Suppose $\n_i(\x) = b_i(t)$ is a supporting hyperplane for $w.$ Then $\exists j: 1 \le j \le k$ such that $\n_i$ is a row vector of $M_j$ and $w = M_j^{-1}(\b^j(t)).$ By our argument above, $M_j^{-1}(\b)$ is the unique vertex $v$ of $P$ that is within $\epsilon$-distance to $w.$ %However, such a vertex is unique. Thus, $v = M_j^{-1}(\b).$ 
Hence, $\n_i(\x) = b_i$ is a supporting hyperplane for $v=M_j^{-1}(\b).$ Therefore, \eqref{eq:Nsubset} follows. The inclusion relation \eqref{eq:dfcsubset} follows from \eqref{eq:Nsubset} and \eqref{equ:gendfcone}.

Finally, we prove (iii). Suppose $P(t)$ is non-empty and has vertices. It is clear that $P$ is a polyhedron with vertices, thus without lines. Also, $P$ and $P(t)$ have the same recession cone:
\[ K := \{ \x \in \R^D \ | \ A \x \le 0 \}.\]
By \eqref{eq:dfcsubset}, we have
\[ (\fcone(P(t),w))^\circ \subseteq (\fcone(P, v))^\circ , \quad \forall w \in W_{t,v}.\]
Thus,
\[ \bigcup_{w \in W_{t,v}} (\fcone(P(t),w))^\circ \subseteq (\fcone(P, v))^\circ .\]
	However, applying \eqref{equ:dfcone} to both $P$ and $P(t),$ we get
	\begin{align*}
		\sum_{v \in \vert(P)} [ (\fcone(P, v))^\circ ] \equiv [ K^\circ ]& \equiv \sum_{v \in \vert(P)} \sum_{w \in W_{t,v}} [ (\fcone(P(t),w)^\circ ] \\
		 & \text{ modulo polyhedra in proper subspaces.} 
	\end{align*}
	Hence, we must have
	\[ [ (\fcone(P, v))^\circ ] \equiv \sum_{w \in W_{t,v}} [ (\fcone(P(t),w)^\circ ]  \text{ modulo polyhedra in proper subspaces.} \] 
	Then (iii) follows from Lemma \ref{lem:equvipolar}, the above formula, and Theorem \ref{thm:rational}.
\end{proof}

\begin{proof}[Proof of Theorem \ref{thm:func}]
%It is clear that $P$ is a polyhedron with vertices, thus without lines. Also, $P$ and all the $P(t)$ have the same recession cone:
%\[ K := \{ \x \in \R^D \ | \ A \x \le 0 \}.\]
Clearly, (ii) follows from (i) and Lemma  \ref{lem:vertconv}/(iii). So we just need to prove (i).

Fix $j \in \{1, \dots, \ell\}.$ %It suffices to show that for sufficiently small $\epsilon$, there exists $\delta_\epsilon$ and a unique vertex $v$ of $P$ such that $|w_{t,j} - v| < \epsilon$ for any $t: |t| < \delta_\epsilon.$ However, 
	By Lemma \ref{lem:vertconv}, for sufficiently small $\epsilon$, there exists $\delta_\epsilon$, such that for any $t: 0 < |t| < \delta_\epsilon$, any $w_{t,j}$ is within $\epsilon$-distance to a unique vertex, say $v_t$, of $P.$ It suffices to show that $v_t$ are the same vertex for any $t: 0 < |t| < \delta_\epsilon.$ We assume to the contrary that there exist $t_1 \neq t_2$ such that $v_{t_1} \neq v_{t_2}.$ However, by \eqref{eq:dfcsubset}, we have
	\[ K_j^\circ \subseteq (\fcone(P,v_{t_1}))^\circ
	\text{ and } K_j^\circ \subseteq (\fcone(P,v_{t_2}))^\circ.\]
Hence, 	
	\[ K_j^\circ \subseteq (\fcone(P,v_{t_1}))^\circ \cap (\fcone(P,v_{t_2}))^\circ.\]
	Since $K_j$ is a pointed cone, the polar $K_j^\circ$ is full-dimensional. However, the intersection of $(\fcone(P,v_{t_1}))^\circ$ and $(\fcone(P,v_{t_2}))^\circ$ lies in the hyperplane $\{ \n \ | \ \n(v_{t_1}-v_{t_2}) =0\}$. This is a contradiction. Hence, we have (i).
%
	%By \eqref{eq:dfcsubset} again, we have
%\[ K_j^\circ \subseteq (\fcone(P, v))^\circ , \forall j \in J_v.\]
%Thus,
%\[ \bigcup_{j \in J_v} K_j^\circ \subseteq (\fcone(P, v))^\circ .\]
%	However, applying \eqref{equ:dfcone} to both $P(t)$ and $P,$ we get
%	\begin{eqnarray*}
%		\sum_{v \in \vert(P)} \sum_{j \in J_v} [ K_j^\circ ] \equiv &[ K^\circ ]& \equiv \sum_{v \in \vert(P)} [ (\fcone(P, v))^\circ ] \\
%		& & \text{ modulo polyhedra in proper subspaces.} 
%	\end{eqnarray*}
%	Hence, we must have
%	\[ \sum_{j \in J_v} [ K_j^\circ ] \equiv [ (\fcone(P, v))^\circ ] \text{ modulo polyhedra in proper subspaces.} \] 
%	Then (ii) follows from Lemma \ref{lem:equvipolar}, the above formula, and Theorem \ref{thm:rational}.
\end{proof}

%\begin{cor}\label{cor:funcentire}
%	We assume the same conditions as Theorem \ref{thm:func}, and assume further that $P$ is integral. Then 
%	\[ f(P, \z) = \sum_{v \in \vert(P)} \z^v \sum_{j \in J_v} f(K_j, \z) = \sum_{j=1}^\ell \z^{\lim_{t \to 0} w_{t,j}} f(K_j, \z).\] 
%\end{cor}

\begin{rem}\label{rem:seq}
	Theorem \ref{thm:func} and Theorem \ref{thm:funcentire} still hold if we replace the continuous function $\b(t)$ with a sequence $\{ \b_1, \b_2, \dots \}$ converging to $\b$.
\end{rem}

\section{Properties of Transportation Polytopes}\label{sec:proptrans}
In Section \ref{sec:unipert}, we will apply the perturbation method introduced in the last section to transportation polytopes.
Before that, we review properties of transportation polytopes. Most of them can be found in \cite{yemelichevKK}.

%We first note that the original interpretation of transportation polytope comes from a problem of transportation: Suppose there are $m$ source supply a product, and $n$ sink receive the product. The $i$th source supply $r_i$ units of the product and the $j$th sink receive $c_j$ units of the products. The each point $M$ in $\cT(\br, \bc)$ corresponds to one way to transport these products, where $M(i,j)$ is the number of units that is transported from the $i$th source to the $j$th sink.
We first note that the original interpretation of transportation polytope comes from planning the carriage of goods: There are $m$ suppliers which can supply the same product and which must be delivered to $n$ users. Suppose the $i$th supplier produces $r_i$ units of the product and the $j$th user requests $c_j$ units.
Then each point $M$ in $\cT(\br,\bc)$ corresponds to one way of distributing this product from the suppliers to the users, where $M(i,j)$ is the number of units that is transported from the $i$th supplier to the $j$th user. 

%With this interpretation, one sees that
Transportation polytopes have natural connection to the complete bipartite graphs. 
%Let $K_{m,n}$ be the complete bipartite graph with $m$ vertices on the left and $n$ vertices on the right. Denote by $e_{i,j}$ the edge in $K_{m,n}$ connecting the $i$th left vertex and the $j$th right vertex. 
Let $K_{m,n}$ be the complete bipartite graph with $m$ vertices $u_1, \dots, u_m$ on the left and $n$ vertices $w_1, \dots, w_n$ on the right. (One could consider that the $u_i$'s are suppliers and $w_j$'s are users.) Throughout the rest of the paper, we refer to $u_i$'s as the left vertices and $w_j$'s as the right vertices. Denote by $e_{i,j}$ the edge in $K_{m,n}$ connecting $u_i$ and the $w_j.$ For any subgraph $G$ of $K_{m,n},$ we denote by $E(G)$ the edge set of $G.$

Let $A_{m,n}$ be the $(m + n) \times mn$ incidence matrix of $K_{m,n}$. (Then the column $A_{m,n}^{i,j}$ of $A_{m,n}$ corresponding to the edge $e_{i,j}$ is the $(m+n)$-vector where the $i$th and $(m+j)$th component are $1$ and zero elsewhere.) Then the transportation polytope $\cT(\br, \bc)$ can be described by
\[ A_{m,n} \x = \begin{pmatrix}\br^T \\ \bc^T\end{pmatrix}, \ \ \x \ge 0.\]
	We often call matrix $A_{m,n}$ the {\it constraint matrix} of the transportation polytopes of order $m \times n.$

A vertex of a transportation polytope of order $m \times n$ is {\it non-degenerate} if it has exactly $m+n-1$ entries that are positive; otherwise it is {\it degenerate}. A transportation polytope is {\it non-degenerate} if all its vertices are non-degenerate; otherwise it is {\it degenerate}. It's easy to see that every non-degenerate transportation polytope is simple.
It is known that $A_{m,n}$ is a {\it totally unimodular matrix}, i.e., every minor of $A_{m,n}$ is $0$, $1$, or $-1$. Therefore, if a transportation polytope is non-degenerate or simple, it is a totally unimodular polytope. Then we can apply Corollary \ref{cor:totuni} to find its MGF.  
%For generic choices of $\br$ and $\bc,$ we have that $\cT(\br,\bc)$ is non-degenerate. So one would be able to find the MGF of a lot of transportation polytopes easily. 

There is an easy way to identify non-degenerate transportation polytopes.
\begin{thm}[Theorem 1.2 of Chapter 6 in \cite{yemelichevKK}]\label{thm:chardeg}
	The transportation polytope $\cT(\br,\bc)$ is non-degenerate if and only if the only nonempty index subsets $I \subseteq [m]$ and $J \subseteq [n]$ satisfying $\sum_{i \in I} r_i = \sum_{j \in J} c_j$ are $I = [m]$ and $J = [n].$
\end{thm}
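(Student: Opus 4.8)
The plan is to exploit the standard dictionary between vertices of $\cT(\br,\bc)$ and forests in $K_{m,n}$, together with a block ``decoupling'' construction. I would begin by recording the basic facts: a point $M\in\cT(\br,\bc)$ is a vertex if and only if the columns of $A_{m,n}$ indexed by $\supp(M)=\{e_{i,j}:M(i,j)>0\}$ are linearly independent, and, since $A_{m,n}$ is the incidence matrix of the bipartite graph $K_{m,n}$, this holds if and only if $\supp(M)$ is acyclic, i.e.\ a forest. In particular a vertex $M$ satisfies $|\supp(M)|\le m+n-1$, with equality exactly when $\supp(M)$ is a spanning tree; so $M$ is degenerate precisely when the forest $\supp(M)$ has at least two connected components, and $\cT(\br,\bc)$ is degenerate iff such a vertex exists. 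These facts (or the acyclicity characterization proved directly) carry the whole argument.

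For the forward direction (degenerate $\Rightarrow$ a nontrivial splitting exists), I would take a degenerate vertex $M$ and let $C_1,\dots,C_p$, with $p\ge 2$, be the vertex sets of the connected components of $\supp(M)$, writing $C_\ell=I_\ell\sqcup J_\ell$ with $I_\ell\subseteq[m]$, $J_\ell\subseteq[n]$. Since $r_i=\sum_j M(i,j)>0$ no left vertex is isolated in $\supp(M)$, and likewise no right vertex is; hence every component contains an edge and therefore meets both sides, so each $I_\ell$ and each $J_\ell$ is nonempty, and since $p\ge 2$ both are proper subsets. Because $M(i,j)=0$ whenever $i$ and $j$ lie in different components, summing the entries of $M$ over the rows indexed by $I_\ell$ gives $\sum_{(i,j)\in I_\ell\times J_\ell}M(i,j)$, and summing over the columns indexed by $J_\ell$ gives the same quantity; so $\sum_{i\in I_\ell}r_i=\sum_{j\in J_\ell}c_j$, and $(I,J):=(I_\ell,J_\ell)$ is the pair we want.

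For the converse, suppose nonempty $I\subseteq[m]$, $J\subseteq[n]$ satisfy $\sum_{i\in I}r_i=\sum_{j\in J}c_j$ with $(I,J)\ne([m],[n])$. First I would note that both inclusions must be proper: if, say, $I=[m]$ then $\sum_{j\in J}c_j=\sum_i r_i=\sum_j c_j$, which forces $J=[n]$ because all $c_j>0$, contradicting $(I,J)\ne([m],[n])$. Consequently $\sum_{i\notin I}r_i=\sum_{j\notin J}c_j$ as well, so the two transportation polytopes $\cT(\br|_I,\bc|_J)$ (on rows $I$, columns $J$) and $\cT(\br|_{[m]\setminus I},\bc|_{[n]\setminus J})$ are both nonempty. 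Choosing a vertex $M'$ of the first and $M''$ of the second, I would form the $m\times n$ matrix $M$ equal to $M'$ on $I\times J$, equal to $M''$ on $([m]\setminus I)\times([n]\setminus J)$, and $0$ elsewhere. Reading off row and column sums shows $M\in\cT(\br,\bc)$; its support is contained in $\supp(M')\sqcup\supp(M'')$, a disjoint union of two forests on disjoint vertex sets and hence itself a forest, so $M$ is a vertex; and $|\supp(M)|\le(|I|+|J|-1)+((m-|I|)+(n-|J|)-1)=m+n-2$, so $M$ is degenerate, whence $\cT(\br,\bc)$ is degenerate.

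There is no serious obstacle here; the two points to handle with care are the equivalence ``vertex $\iff$ support is a forest'' (which is exactly what makes the glued matrix $M$ a genuine vertex rather than merely a feasible point) and the use of positivity of all $r_i$ and $c_j$ to rule out isolated vertices in $\supp(M)$, so that the components of $\supp(M)$ split $[m]$ and $[n]$ honestly. Since the statement is Theorem~1.2 of Chapter~6 of \cite{yemelichevKK}, one could alternatively just cite it; the sketch above is a self-contained replacement.
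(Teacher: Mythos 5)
Your proof is correct. Note that the paper does not prove Theorem~\ref{thm:chardeg} itself; it is cited directly from \cite{yemelichevKK}, so there is no in-paper argument to compare against. Your self-contained argument via the forest characterization of vertices (the same characterization the paper records just before Theorem~\ref{thm:charvert}) is the standard one and is sound: positivity of all $r_i$ and $c_j$ ensures $\supp(M)$ is a spanning forest so that degeneracy of a vertex is equivalent to $\supp(M)$ having at least two components, the forward direction reads the nontrivial pair $(I_\ell,J_\ell)$ off a single component, and the converse's block-gluing of vertices of the two sub-transportation polytopes yields a genuine vertex because a disjoint union of forests on disjoint vertex sets is again a forest, with $\lvert\supp(M)\rvert\le m+n-2$. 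Both directions are complete.
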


To express the MGF of a totally unimodular polytope, we need to know how to describe its vertices, and the generating rays of each feasible cone.
%It will be convenient to identify the coefficients of an $m \times n$ matrix $M$ in $\cT(\br,\bc)$ with edge weights on the complete bipartite graph $K_{m,n}:$
%For any $M$ in $\cT(\br,\bc)$, we can consider it as a weight function $w$ on edge set of $K_{m,n}:$ 
%Let $M(i,j)$ be the weight $w(e_{i,j})$ of the edge $e_{i,j}$. Then we have
%\begin{equation}\label{equ:wt}
%	\sum_{k=1}^n w(e_{i,k}) = r_i, \forall i \ \text{and} \ \sum_{k=1}^m w(e_{k,j}) = c_j, \forall j.
%\end{equation}
%It is clear that points $M$ in $\cT(\br,\bc)$ is in one-to-one correspondence to weight functions \[ w: \text{edges of $K_{m,n}$} \to \R_{\ge 0}\] satisfying \eqref{equ:wt}. 

\subsection{Vertices of transportation polytopes} The vertices of a transportation polytope can be characterized with the {\it auxiliary graphs}.

\begin{defn}\label{defn:aux}
%	For any $M \in \cT(\br,\bc)$, we define $\supp(M)$ to be the set of edges of $K_{m,n}$ that have positive corresponding weights.
%	For any $M \in \cT(\br,\bc)$, we define $\supp(M)$ to be the set of indices $(i,j)$ such that $M(i,j)$ is positive.

	%The {\it auxiliary graph} of $M$, denoted by $\aux(M)$, is the subgraph of $K_{m,n}$ with edge set $\supp(M).$
	%The {\it auxiliary graph} of $M$, denoted by $\aux(M)$, is the subgraph of $K_{m,n}$ with edge set $\{ e_{i,j} \ | \ (i,j) \in \supp(M)\}.$
	The {\it auxiliary graph} of $M$, denoted by $\aux(M)$, is the subgraph of $K_{m,n}$ with edge set $\{ e_{i,j} \ | \ M(i,j) > 0 \}.$

%	We denote by $\Aux(\br,\bc)$ the set of all the auxiliary graphs obtained from points in $\cT(\br, \bc).$
	We denote by $\vertAux(\br,\bc)$ the set of all the auxiliary graphs obtained from vertices of $\cT(\br, \bc).$

%	For any subgraph $G$ of $K_{m,n}$, if there exists a weight function $w$ that assigns positive numbers to the edges in $G$ and assigns zeros to the other edges in $K_{m,n}$ satisfying \eqref{equ:wt}, we say $G$ 
\end{defn}

A point $M \in \cT(\br, \bc)$ is a vertex of $\cT(\br,\bc)$ if and only if the column vectors $\{ A_{m,n}^{i,j} \ | \ M(i,j) > 0 \}$ are linearly independent. However, using matroid theory, one can show that given any index set $ind \subset [m] \times [n],$ the column vectors $\{ A_{m,n}^{i,j} \ | (i,j) \in ind\}$ are linearly independent if and only if the subgraph of $K_{m,n}$ with edge set $\{ e_{i,j} \ | \ (i,j) \in ind\}$ is a forest. Therefore, we have the following theorem and lemma, both of which follow from Theorem 2.2 of Chapter 6 in \cite{yemelichevKK}.

\begin{thm}%[Theorem 2.2 of Chapter 6 in \cite{yemelichevKK}]
	\label{thm:charvert}
	Let $M \in \cT(\br, \bc).$ Then $M$ is a vertex of $\cT(\br,\bc)$ if and only if $\aux(M)$ is a spanning forest of $K_{m,n}.$ Furthermore, $\aux$ induces a bijection between the vertex set of $\cT(\br, \bc)$ and the set of spanning forests of $K_{m,n}$ that are auxiliary graphs of some points in $\cT(\br, \bc).$

	In particular, if $\cT(\br,\bc)$ is non-degenerate, $M$ is a vertex of $\cT(\br,\bc)$ if and only if $\aux(M)$ is a spanning tree of $K_{m,n}.$ Thus, $\aux$ induces a bijection between the vertex set of $\cT(\br, \bc)$ and the set of spanning trees of $K_{m,n}$ that are auxiliary graphs of some points in $\cT(\br, \bc).$

\end{thm}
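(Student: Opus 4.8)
The plan is to combine the two facts recalled immediately before the statement: that $M \in \cT(\br,\bc)$ is a vertex if and only if the columns $\{A_{m,n}^{i,j} : M(i,j) > 0\}$ of the constraint matrix are linearly independent, and that for an index set $ind \subseteq [m] \times [n]$ the columns $\{A_{m,n}^{i,j} : (i,j) \in ind\}$ are linearly independent exactly when the subgraph of $K_{m,n}$ with edge set $\{e_{i,j} : (i,j) \in ind\}$ is a forest. Taking $ind$ to be the support of $M$, so that the corresponding edge set is $E(\aux(M))$, these two facts together say precisely that $M$ is a vertex of $\cT(\br,\bc)$ if and only if $\aux(M)$ is a forest.

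Next I would promote ``forest'' to ``spanning forest''. By definition $\aux(M)$ carries the full vertex set of $K_{m,n}$; moreover it has no isolated vertex, since $r_i = \sum_j M(i,j) > 0$ puts an edge at $u_i$ and $c_j = \sum_i M(i,j) > 0$ puts an edge at $w_j$. Hence ``$\aux(M)$ is a forest'' and ``$\aux(M)$ is a spanning forest'' are the same condition, which gives the first assertion of the theorem.

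For the bijection, I would first check that $\aux$ is injective on $\vert(\cT(\br,\bc))$. If vertices $M$ and $M'$ satisfy $\aux(M) = \aux(M') =: G$, then they share the support $S = E(G)$, and both $M$ and $M'$, read off on $S$, solve the linear system $A_{m,n}[S]\,\y = \binom{\br^T}{\bc^T}$, where $A_{m,n}[S]$ denotes the submatrix of columns indexed by $S$. Since $G$ is a forest, the columns of $A_{m,n}[S]$ are linearly independent, so this system has at most one solution, forcing $M = M'$. Conversely, if a spanning forest $G$ arises as $\aux(M)$ for some $M \in \cT(\br,\bc)$ (not a priori a vertex), then $\aux(M) = G$ is a forest, so by the first assertion $M$ is in fact a vertex; thus the image of $\aux$ restricted to $\vert(\cT(\br,\bc))$ is exactly the set of spanning forests of $K_{m,n}$ occurring as auxiliary graphs of points of $\cT(\br,\bc)$, namely $\vertAux(\br,\bc)$. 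This establishes the asserted bijection.

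Finally, for the non-degenerate statement, assume $\cT(\br,\bc)$ is non-degenerate and let $M$ be a vertex. Then $\aux(M)$ is a spanning forest with exactly $m+n-1$ edges on the $m+n$ vertices of $K_{m,n}$; since the number of connected components of a forest equals its number of vertices minus its number of edges, $\aux(M)$ is connected, hence a spanning tree. Conversely a spanning tree is a spanning forest, so if $\aux(M)$ is a spanning tree then $M$ is a vertex by the first assertion, and restricting the bijection above gives the stated bijection between $\vert(\cT(\br,\bc))$ and the spanning trees of $K_{m,n}$ realized as auxiliary graphs of points of $\cT(\br,\bc)$. I do not anticipate a real obstacle here: once the two quoted facts are in hand the argument is essentially bookkeeping, the only slightly delicate points being the unique solvability over a forest support (injectivity of $\aux$) and the remark that any point with forest auxiliary graph is automatically a vertex, which is what pins down the image of $\aux$.
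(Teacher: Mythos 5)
Your proof is correct and follows essentially the same route the paper takes: the paper states exactly the two facts you combine (a point is a vertex iff its support columns are independent; columns of the incidence matrix are independent iff the corresponding edges form a forest) and then delegates the remaining bookkeeping to Theorem 2.2 of Chapter 6 of the Yemelichev--Kovalev--Kravtsov reference. You have simply supplied that bookkeeping explicitly --- injectivity of $\aux$ from unique solvability over a forest support, the identification of the image, and the edge count showing a non-degenerate vertex's auxiliary forest with $m+n-1$ edges on $m+n$ vertices is a spanning tree --- with no gaps.
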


\begin{lem}\label{lem:samevrt}
	Let $M$ and $N$ be two vertices in $\cT(\br, \bc).$ Suppose there is a spanning tree $T$ of $K_{m,n}$ such that both $\aux(M)$ and $\aux(N)$ are subgraphs of $T.$ Then $M$ and $N$ are the same vertices. 
\end{lem}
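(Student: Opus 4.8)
The plan is a direct linear-algebra argument resting on the matroid fact quoted just before Theorem~\ref{thm:charvert}: for an index set $ind \subseteq [m]\times[n]$, the columns $\{A_{m,n}^{i,j} \mid (i,j)\in ind\}$ are linearly independent precisely when the corresponding edge set spans a forest in $K_{m,n}$.

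First I would record that $A_{m,n}$ has rank exactly $m+n-1$. The sum of the rows of $A_{m,n}$ indexed by the left vertices equals the sum of the rows indexed by the right vertices (both are the all-ones row over $E(K_{m,n})$), so $\rk A_{m,n}\le m+n-1$; conversely, any spanning tree of $K_{m,n}$ has $m+n-1$ edges, and by the fact above the corresponding columns are linearly independent, so $\rk A_{m,n}=m+n-1$. In particular the column space of $A_{m,n}$ is $(m+n-1)$-dimensional, and it contains the right-hand side $(\br^T,\bc^T)^T$ since $\sum_i r_i=\sum_j c_j$.

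Now fix the spanning tree $T$ of the hypothesis, and let $B$ be the submatrix of $A_{m,n}$ whose columns are indexed by the $m+n-1$ edges in $E(T)$. By the quoted fact these columns are linearly independent, and since there are $m+n-1$ of them inside the $(m+n-1)$-dimensional column space, they form a basis of the column space of $A_{m,n}$; equivalently, $B\colon \R^{E(T)}\to\R^{m+n}$ is injective. Because $\aux(M)\subseteq T$, the matrix $M$ vanishes on every edge outside $E(T)$, so if $x_M\in\R^{E(T)}$ collects the entries $M(i,j)$ for $e_{i,j}\in E(T)$ (and $x_N$ is defined analogously from $N$), then $B\,x_M = (\br^T,\bc^T)^T = B\,x_N$. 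Injectivity of $B$ forces $x_M=x_N$, and since both $M$ and $N$ are supported inside $E(T)$ this means $M=N$.

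The argument is essentially bookkeeping, so I do not anticipate a genuine obstacle; the one point that must not be glossed over is the rank computation $\rk A_{m,n}=m+n-1$, because it is exactly what upgrades "the $m+n-1$ tree columns are linearly independent" to "they span the column space," which is what yields the \emph{uniqueness} of the coordinate representation of $(\br^T,\bc^T)^T$ and hence $M=N$. (Note that the hypothesis that $M,N$ are vertices is automatically consistent: having support inside a spanning tree makes $\aux(M)$ and $\aux(N)$ forests, so Theorem~\ref{thm:charvert} applies.)
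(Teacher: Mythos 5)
Your proof is correct and self-contained; the paper itself gives no argument, merely citing Theorem~2.2 of Chapter~6 in \cite{yemelichevKK}, so you have essentially supplied the linear-algebra reasoning that the cited result encapsulates (a vertex with support in a given spanning tree is determined by solving the square nonsingular system over the tree edges).

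One small clarification to your closing remark: the rank computation $\rk A_{m,n}=m+n-1$ is not actually what your uniqueness argument hinges on. Injectivity of $B$ already follows from the fact that its columns (being indexed by the edges of a forest) are linearly independent; that alone forces $x_M = x_N$. The rank fact upgrades ``linearly independent'' to ``basis of the column space,'' which would be relevant if you needed \emph{existence} of a solution with support in $E(T)$, but here you are handed $M$ and $N$ as solutions, so you only need uniqueness, and hence only independence. The rank computation is a pleasant sanity check but could be dropped without weakening the proof.
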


\begin{ex}\label{ex:aux}
Let $\br = \bc = (1, 1, 2).$ The transportation polytope $\cT(\br,\bc)$ have seven vertices:
\begin{align*}
	& M_0 = \begin{pmatrix}1&0&0 \\ 0&1&0 \\ 0&0&2 \end{pmatrix}, \ 
	  M_1 = \begin{pmatrix}0&1&0 \\ 1&0&0 \\ 0&0&2 \end{pmatrix}, \
	  M_2 = \begin{pmatrix}1&0&0 \\ 0&0&1 \\ 0&1&1 \end{pmatrix}, \
	  M_3 = \begin{pmatrix}0&0&1 \\ 0&1&0 \\ 1&0&1 \end{pmatrix}, \\
	&  M_4 = \begin{pmatrix}0&1&0 \\ 0&0&1 \\ 1&0&1 \end{pmatrix}, \
	  M_5 = \begin{pmatrix}0&0&1 \\ 1&0&0 \\ 0&1&1 \end{pmatrix}, \
	  M_6 = \begin{pmatrix}0&0&1 \\ 0&0&1 \\ 1&1&0 \end{pmatrix}. 
\end{align*}
Let $T_i = \aux(M_i)$ be the auxiliary graph of $M_i$ for $0 \le i \le 6.$ Then \begin{align*}
	& E(T_0) = \{ e_{1,1}, e_{2,2}, e_{3,3}\}, \
	 E(T_1) = \{ e_{1,2}, e_{2,1}, e_{3,3}\}, \
	 E(T_2) = \{ e_{1,1}, e_{2,3}, e_{3,2}, e_{3,3}\}, \\
	& E(T_3) = \{ e_{1,3}, e_{2,2}, e_{3,1}, e_{3,3}\}, \ 
	 E(T_4) = \{ e_{1,2}, e_{2,3}, e_{3,1}, e_{3,3}\}, \\ 
	& E(T_5) = \{ e_{1,3}, e_{2,1}, e_{3,2}, e_{3,3}\}, \ 
	 E(T_6) = \{ e_{1,3}, e_{2,3}, e_{3,1}, e_{3,2}\}. 
\end{align*}
Each $T_i$ is a spanning forest of $K_{3,3}.$
\end{ex}

We remark that a weak statement of Theorem \ref{thm:charvert} is that $\aux$ induces a bijection between $\vert(\cT(\br,\bc))$ and $\vertAux(\br,\bc).$ This is the first reason why the set $\vertAux(\br,\bc)$ is important (and why we have defined this set). In fact, in addition to the one-to-one correspondence to the vertex set of $\cT(\br,\bc),$ the set $\vertAux(\br,\bc)$ also determines the combinatorial structure of $\cT(\br,\bc)$ as we will see in the next subsection (cf. Corollary \ref{cor:samefeas}).  

%The following lemma characterizes the sets of rays that generates the feasible cone of vertices of a non-degenerate transportation polytope. One can modify the lemma for general transportation polytopes; but it requires more definitions. Therefore, for simplicity, we only state the version for non-degenerate cases. 

\subsection{Feasible cones of transportation polytopes} 
Section 4.1 of Chapter 6 in \cite{yemelichevKK} gives a complete description of the generating rays of feasible cones of transportation polytopes, as well as a characterization of when two vertices are adjacent in a transportation polytope. We summarize the results as two lemmas below (Lemma \ref{lem:charray} and Lemma \ref{lem:adjvert}). We note that although the results in \cite{yemelichevKK} are presented slightly differently, our lemmas follow easily from them. We begin with a preliminary definition.

\begin{defn}\label{defn:cyc}
	Let $T$ be a spanning forest of $K_{m,n}$. Suppose $\be =\{e^{(1)}, e^{(2)}, \dots, e^{(k)}\}$ is a set of distinct edges that are not in $T$ satisfying the following conditions:
	\begin{alist}
	\itm $T \cup \be$ creates a unique cycle. %Note this cycle is unique if it exists. 
	(Note that this cycle must have even length.)
	\itm All the edges in the set $\be$ are contained in the cycle. 
	\itm Suppose this cycle is:
	\[ e^{(1)} = e_{i_1, j_1}, e_{i_2, j_1}, e_{i_2, j_2}, \dots, e_{i_s,j_s}, e_{i_1, j_s}. \]
	Then each $e^{(\ell)}$ appears at an odd position in the above list. This is equivalent to say that any two edges in the set $\be$ have an even distance in the cycle.
\end{alist}
	Given such a pair $T$ and $\be,$
	we define $\cyc(T, \be)$ to be the $m \times n$ matrix whose entries are defined as
	\begin{equation}\label{equ:cyc}
	\cyc(T,\be)(i,j)  = \begin{cases}1, & \text{if $(i,j) \in \{(i_1,j_1), (i_2,j_2), \dots, (i_s, j_s)\}$};\\
		-1, & \text{if $(i,j) \in \{(i_2,j_1), (i_3, j_2), \dots, (i_1, j_s)\}$};\\
        0, &  \mbox{otherwise}.\end{cases}
\end{equation}

\end{defn}

\begin{ex}\label{ex:cycle}
	Let $T$ be the spanning forest $T_0$ of $K_{3,3}$ defined in Example \ref{ex:aux} and $\be = \{e_{1,2}, e_{2,1}\}.$ Then the pair $(T, \be)$ satisfies the conditions in Definition \ref{defn:cyc}. In particular, the unique cycle in $T \cup \be$ is: $e_{1,2}, e_{2,2}, e_{2,1}, e_{1,1}.$ Hence, 
	\[ \cyc(T, \be) = \begin{pmatrix} -1&1&0 \\ 1&-1&0 \\ 0&0&0 \end{pmatrix}.\]
\end{ex}

\begin{lem}\label{lem:charray}

	Let $M$ be a vertex in $\cT(\br, \bc),$ and $T = \aux(M)$ the auxiliary graph of $M.$ (By Theorem \ref{thm:charvert}, $T$ is a spanning forest of $K_{m,n}.$) Then 
	\begin{equation}\label{equ:rays}
		\{ \cyc(T,\be) \ | \ \text{$(T, \be)$ is a pair satisfies the conditions in Definition \ref{defn:cyc} }\}
	\end{equation}
is the set of rays that generates the feasible cone of $\cT(\br, \bc)$ at the vertex $M.$
	
In particular, if $\cT(\br, \bc)$ is non-degenerate. Then the generating set \eqref{equ:rays} can be rewritten as
	\begin{equation}\label{equ:raysnondeg}
		\{ \cyc(T,e) \ | \ e \not\in E(T)\}.
	\end{equation}
\end{lem}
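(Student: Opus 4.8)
The plan is to reduce to the cited result from \cite{yemelichevKK} (Section 4.1 of Chapter 6) by unwinding its statement into the language of $\cyc(T,\be)$, and then to handle the non-degenerate case as a simple special case. First I would recall the abstract description of $\fcone(\cT(\br,\bc),M)$: since $\cT(\br,\bc)$ is cut out by $A_{m,n}\x = \binom{\br^T}{\bc^T}$ together with $\x \ge 0$, a vector $U$ (viewed as an $m\times n$ matrix) lies in $\fcone(\cT(\br,\bc),M)$ if and only if $A_{m,n}U = 0$ and $U(i,j)\ge 0$ whenever $M(i,j)=0$, i.e.\ whenever $e_{i,j}\notin E(T)$. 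The condition $A_{m,n}U=0$ says precisely that every row sum and every column sum of $U$ vanishes, equivalently that $U$ lies in the lattice of ``flows'' on $K_{m,n}$; the support-positivity condition says the ``negative part'' of $U$ is supported on $E(T)$. So the feasible cone is the set of zero-margin matrices whose negative entries occur only among the edges of the spanning forest $T$.

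Next I would identify the extreme rays of this cone. The key combinatorial input, which I would extract from \cite{yemelichevKK}, is that the extreme rays of a flow cone of this shape are exactly the ``elementary circulations'' along minimal cycles compatible with the sign constraints. Concretely: a ray generator must be supported on an edge set that, together with some edges of $T$, forms a single cycle in $K_{m,n}$ — any larger support would be a nonnegative combination of such — and along that (necessarily even) cycle the entries alternate $+1,-1,+1,-1,\dots$; the non-tree edges of the cycle must carry the value that is allowed to be positive, i.e.\ they sit at the $+1$ positions (odd positions in the listing of Definition \ref{defn:cyc}), while the $-1$'s are forced onto tree edges. This is exactly the data encoded by a pair $(T,\be)$ satisfying conditions (a), (b), (c) of Definition \ref{defn:cyc}, and the resulting $\pm 1$ matrix is exactly $\cyc(T,\be)$ by \eqref{equ:cyc}. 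Thus the set \eqref{equ:rays} is precisely the set of extreme rays of $\fcone(\cT(\br,\bc),M)$, and I would check that distinct pairs $(T,\be)$ give distinct (non-proportional) rays, so there is no overcounting. Here I should note that when $\be$ is allowed to have more than one edge, the cycle $T\cup\be$ still contains only tree edges off of $\be$, so the cycle is well defined; condition (c) guarantees that putting $+1$ on all of $\be$ is consistent with a single global alternation, which is what makes $\cyc(T,\be)$ lie in the cone.

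Finally, for the non-degenerate case: by Theorem \ref{thm:charvert}, $T=\aux(M)$ is a spanning \emph{tree} of $K_{m,n}$, so for any single edge $e\notin E(T)$ the graph $T\cup e$ contains a unique cycle, and that cycle uses only tree edges besides $e$; hence every pair $(T,e)$ automatically satisfies (a), (b), (c) with $\be=\{e\}$. Conversely, if $\be$ has two or more edges, then $T\cup\be$ is not a forest plus one edge and cannot create a \emph{unique} cycle while keeping all of $\be$ on it with the parity condition — more simply, a spanning tree plus two distinct non-tree edges contains two distinct cycles, violating (a). So in the non-degenerate case the admissible pairs are exactly the singletons $\be=\{e\}$, and \eqref{equ:rays} collapses to \eqref{equ:raysnondeg}. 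The main obstacle I anticipate is the first half: making rigorous the claim that the extreme rays of the flow cone with mixed sign constraints are exactly the single-cycle alternating generators. If a fully self-contained argument is wanted rather than a citation, one would argue by a support-minimality / circuit-of-the-graphic-matroid computation — any nonzero zero-margin matrix with negative support inside $E(T)$ decomposes as a nonnegative sum of such elementary cycle matrices (peel off a minimal cycle in the support, noting the forest $T$ forces its orientation), and each elementary one is genuinely extreme because its support is a circuit and cannot be written nontrivially as a sum within the cone.
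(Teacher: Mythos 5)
Your overall structure matches the paper: the paper itself gives no proof of this lemma, instead remarking that it and Lemma~\ref{lem:adjvert} ``follow easily'' from Section~4.1 of Chapter~6 in \cite{yemelichevKK}, so any proof you write is an elaboration the paper declined to supply. Your abstract description of $\fcone(\cT(\br,\bc),M)$ as the zero-margin matrices whose negative entries lie on $E(T)$ is exactly right, and your treatment of the non-degenerate case (when $T$ is a spanning tree, $|\be|\ge 2$ forces at least two cycles in $T\cup\be$, killing condition~(a)) is correct and complete.

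The gap is in the middle step, and it concerns condition~(a) of Definition~\ref{defn:cyc}. You repeatedly identify ``alternating $\pm1$ on a single cycle $C$, with all $-1$'s on $T$'' with ``pair $(T,\be)$ satisfying (a), (b), (c),'' and in your last paragraph you assert that each such cycle matrix ``is genuinely extreme because its support is a circuit.'' That is false in the degenerate case. Take $T$ a spanning forest of $K_{4,4}$ with components $T_1=\{e_{1,1},e_{2,1},e_{2,2}\}$ and $T_2=\{e_{3,3},e_{4,3},e_{4,4}\}$, and take the $8$-cycle $C$ through $e_{3,1},e_{3,3},e_{2,3},e_{2,2},e_{4,2},e_{4,4},e_{1,4},e_{1,1}$. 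The alternating $\pm1$ matrix on $C$ has its $-1$'s on $T$, so it lies in the feasible cone, and its support is a single circuit; yet with $\be=C\setminus T$ the graph $T\cup\be$ has cyclomatic number $3$, so the flow space on $T\cup\be$ is $3$-dimensional and the matrix is in the relative interior of a $3$-face, not an extreme ray. So ``support is a circuit with the right signs'' does not imply extremality, and conditions (b), (c) do not imply (a). What does characterize extremality is precisely condition~(a): the tight constraints at a candidate ray are $U(e)=0$ for non-tree $e\notin\be$, so the ambient face has dimension equal to the cyclomatic number of $T\cup\be$, and you need this to equal $1$ — which is exactly ``$T\cup\be$ has a unique cycle.'' Your peel-off argument needs the same correction: a peeled-off cycle need not satisfy (a), so one must either peel carefully (e.g.\ at each step peel a cycle for which $T\cup\be$ has cyclomatic number~1) or peel greedily and then further decompose. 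Once you insert the cyclomatic-number computation linking (a) to the dimension of the tight-constraint face, the rest of your sketch goes through.
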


\begin{lem}\label{lem:adjvert}
	Let $M$ and $N$ be two distinct vertices in $\cT(\br, \bc).$ Then $M$ and $N$ are adjacent if and only if the union of $\aux(M)$ and $\aux(N)$ has a unique cycle. 
	
	Moreover, if $M$ and $N$ are two adjacent vertices, the unique cycle in the union of $\aux(M)$ and $\aux(N)$ is the same as $\aux(M) \cup \be,$ for some $\be =\{e^{(1)}, e^{(2)}, \dots, e^{(k)}\}$ satisfying the conditions in Definition \ref{defn:cyc}. %$\cyc(\aux(M), e),$ for some $e \not\in E(\aux(M)).$ 
	Thus, the unique cycle determines the ray from $M$ to $N$ as described in \eqref{equ:cyc}.
\end{lem}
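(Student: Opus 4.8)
\textbf{Proof proposal for Lemma \ref{lem:adjvert}.}

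The plan is to reduce both directions to the description of generating rays of feasible cones given in Lemma \ref{lem:charray}, together with the adjacency criterion that two vertices $M, N$ of a polytope are adjacent if and only if the segment $[M,N]$ is an edge, equivalently if and only if $N - M$ (up to positive scaling) is a generating ray of $\fcone(\cT(\br,\bc), M)$ (and symmetrically of $\fcone(\cT(\br,\bc), N)$). So the first step is to record this standard polyhedral fact and to observe that, since the feasible cone of a polytope is the cone over the edge directions at the vertex, the rays in \eqref{equ:rays} are exactly the primitive edge directions at $M$; hence $N$ is adjacent to $M$ iff $N - M$ is a positive multiple of $\cyc(T, \be)$ for some pair $(T,\be)$ with $T = \aux(M)$ satisfying the conditions of Definition \ref{defn:cyc}.

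For the ``only if'' direction: suppose $M$ and $N$ are adjacent. By the above, $N - M = \mu\, \cyc(\aux(M), \be)$ for some $\mu > 0$ and some valid $\be = \{e^{(1)}, \dots, e^{(k)}\}$ whose associated cycle $C$ in $\aux(M) \cup \be$ is as in Definition \ref{defn:cyc}. I then need to identify $\aux(M) \cup \aux(N)$ with $\aux(M) \cup C$. The support of $\cyc(\aux(M),\be)$ is exactly the edge set of the cycle $C$; the entries of $N$ equal those of $M$ off $C$, and on $C$ they are $M(i,j) \pm \mu$ alternating around $C$. The key point is that $N$ is itself a \emph{vertex}, so $\aux(N)$ is a spanning forest and in particular contains no cycle; thus as $\mu$ increases from $0$, $N$ is the first point at which one of the entries $M(i,j) - \mu$ (at a ``$-1$'' position of $\cyc$) hits $0$ — exactly as in the standard edge-walk on transportation polytopes. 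Consequently $\aux(N)$ is obtained from $\aux(M)$ by adding the edges of $\be$ and deleting at least one edge of $C$ (the ones whose entry dropped to zero), so $\aux(M) \cup \aux(N) \subseteq \aux(M) \cup C$, and conversely every edge of $C$ lies in $\aux(M) \cup \be \subseteq \aux(M)\cup\aux(N)$ except those deleted — but those deleted edges are in $\aux(M)$, hence still in the union. Therefore $\aux(M) \cup \aux(N) = \aux(M) \cup C$, which has the single cycle $C$. This simultaneously proves the ``Moreover'' clause: the unique cycle is $\aux(M)\cup\be = C$, and by Lemma \ref{lem:charray} (applied via $\cyc(\aux(M),\be)$) this cycle determines the ray from $M$ to $N$ as in \eqref{equ:cyc}.

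For the ``if'' direction: suppose $\aux(M) \cup \aux(N)$ has a unique cycle $C$. Since $M \neq N$, the graphs $\aux(M)$ and $\aux(N)$ differ, and $C$ must contain at least one edge of $\aux(N) \setminus \aux(M)$; set $\be = E(C) \setminus E(\aux(M))$. I must check $(\aux(M), \be)$ satisfies Definition \ref{defn:cyc}: condition (a) is that $\aux(M)\cup\be$ has a unique cycle — this follows because $\aux(M)$ is a forest and adding the chords of $C$ not in $\aux(M)$ creates precisely $C$ and nothing else (any other cycle would survive in $\aux(M)\cup\aux(N)$, contradicting uniqueness); condition (b) is immediate from the choice of $\be \subseteq E(C)$; condition (c), that the edges of $\be$ sit at odd positions / pairwise even distances along $C$, holds because the edges at even distance from a fixed edge $e^{(1)}$ of $C$ are exactly those pointing ``the same way'', and the ones in $\aux(M)$ alternate with those not in it is \emph{not} automatic — this is the one place requiring care. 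The resolution is that $M$ and $N$ both restrict to feasible row/column sums, so around the cycle $C$ the differences $N(i,j) - M(i,j)$ must alternate in sign (a parity argument on the bipartite cycle), forcing the positions where $M$ vanishes (i.e. edges in $\aux(N)\setminus\aux(M)$) and where $N$ vanishes to be at even distance within each class; in particular the edges of $\be$ all occur at the same parity of position, which is condition (c) after relabelling $e^{(1)}$. Once $(\aux(M),\be)$ is valid, $\cyc(\aux(M),\be)$ is a generating ray of $\fcone(\cT(\br,\bc),M)$ by Lemma \ref{lem:charray}, and $N - M$ is a positive multiple of it (same alternation of $\pm 1$, scaled by the common decrement); hence $[M,N]$ is an edge and $M,N$ are adjacent.

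The main obstacle, as flagged, is verifying condition (c) of Definition \ref{defn:cyc} in the ``if'' direction — i.e.\ that the chord set $\be$ really lands at the odd positions of the cycle and that $\cyc(\aux(M),\be)$ genuinely realizes the direction $N-M$. This is a parity/sign-alternation argument on the even cycle $C$ in the bipartite graph $K_{m,n}$, using that both $M$ and $N$ satisfy the same marginal constraints so their difference is supported on $C$ with entries summing to zero along each row and column of $C$; I expect it to go through cleanly but it is the technical heart. Everything else is bookkeeping with forests, the vertex $\leftrightarrow$ spanning-forest correspondence of Theorem \ref{thm:charvert}, and the ray description of Lemma \ref{lem:charray}.
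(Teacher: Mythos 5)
The paper does not actually prove this lemma: it states that Lemmas~\ref{lem:charray} and \ref{lem:adjvert} ``follow easily'' from Section 4.1 of Chapter~6 of \cite{yemelichevKK} and gives no argument. So there is no in-paper proof to compare against, and your self-contained argument is welcome. Your overall strategy is right: reduce adjacency to the edge-direction/feasible-cone criterion and then translate through Lemma~\ref{lem:charray}. The ``only if'' direction is essentially complete (the identity $\aux(M)\cup\aux(N)=\aux(M)\cup\be$ holds because at every $-1$ position $M>0$, so no edge leaves the union when some entries of $N$ drop to zero).

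The one genuine gap is exactly the one you flag in the ``if'' direction, and it is a real gap because two of your later steps depend on it: both the verification of condition~(c) of Definition~\ref{defn:cyc} and the final claim that $N-M$ is a positive multiple of $\cyc(\aux(M),\be)$ require knowing first that $N-M$ is supported on \emph{exactly} the edge set of $C$, not merely inside $\aux(M)\cup\aux(N)$. Without that, a vertex of $C$ could be incident to further support off $C$, and the two-term zero-sum that forces sign alternation fails. The missing argument is short and you should include it: the support $H$ of $N-M$ lies in $G:=\aux(M)\cup\aux(N)$, and every vertex of $H$ has degree $\ge 2$ in $H$ because each row and column of $N-M$ sums to zero; thus every connected component of $H$ contains a cycle of $G$, but $G$ has the unique cycle $C$, so $H$ is connected and contains $C$, and any edge of $H$ outside $C$ would create a degree-one vertex (a tree attachment to $C$). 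Hence $H=E(C)$, the entries alternate $\pm\mu$ around $C$, the $-1$ positions lie in $\aux(M)$, so $\be=E(C)\setminus E(\aux(M))$ sits inside the $+1$ positions and has pairwise even distance, and $N-M=\mu\,\cyc(\aux(M),\be)$ as required. (Equivalently: vectors in $\ker A_{m,n}$ supported on a graph of cycle rank one form a one-dimensional space spanned by the cycle vector of $C$.) With that inserted, the proof is correct.
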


\begin{ex}\label{ex:notsimple}
	Let $\cT(\br,\bc)$ and $M_i$ and $T_i$ be the ones defined in Example \ref{ex:aux}. For each $1 \le i \le 5,$ the union of $T_0$ and $T_i$ has a unique cycle; but the union of $T_0$ and $T_6$ has more than one cycles. Therefore, the vertex $M_0$ has five adjacent vertices $M_1,$ $M_2,$ \dots, $M_5.$ For each $1 \le i \le 5,$ the unique cycle in the union of $T_0$ and $T_i$ determines one of the rays that generates the feasible cone of $\cT(\br,\bc)$ at the vertex $M_0.$ 

	The unique cycle in the union of $T_0$ and $T_1$ is the cycle described in Example \ref{ex:cycle}. Hence, we obtain one ray
	\[ R_1 = \cyc(T_0, \{e_{1,2}, e_{2,1}\}) = \begin{pmatrix} -1&1&0 \\ 1&-1&0 \\ 0&0&0 \end{pmatrix}.\]
	We can similarly find the other four rays, denoting by $R_i$ the ray determined by the union of $T_0$ and $T_i:$
	\begin{align*} 
		R_2 =& \cyc(T_0, \{e_{2,3}, e_{3,2}\}) = \begin{pmatrix} 0&0&0 \\ 0&-1&1 \\ 0&1&-1 \end{pmatrix}; \\
		R_3 =& \cyc(T_0, \{e_{1,3}, e_{3,1}\}) = \begin{pmatrix} -1&0&1 \\ 0&0&0 \\ 1&0&-1 \end{pmatrix}; \\
			R_4 =& \cyc(T_0, \{e_{1,2}, e_{2,3}, e_{3,1}\}) = \begin{pmatrix} -1&1&0 \\ 0&-1&1 \\ 1&0&-1 \end{pmatrix}; \\
				R_5 =& \cyc(T_0, \{e_{1,3}, e_{2,1}, e_{3,2}\}) = \begin{pmatrix} -1&0&1 \\ 1&-1&0 \\ 0&1&-1 \end{pmatrix}. 
		\end{align*}
Thus, $\fcone(\cT(\br,\bc), M_0)$ is generated by $R_1, R_2,$ \dots, $R_5.$ Note that this is not a simple cone.
\end{ex}

It turns out that the transportation polytope $\cT(\br,\bc)$ is integral if $\br$ and $\bc$ are both integer vectors. Hence, the following corollary follows immediately from Lemma \ref{lem:unicone}, Corollary \ref{cor:totuni} and Lemma \ref{lem:charray}. 

\begin{cor}\label{cor:mgfnondeg}
	Suppose $\cT(\br,\bc)$ is a non-degenerate transportation polytope. For any vertex $M$ of $\cT(\br,\bc),$ we have
\begin{equation}\label{equ:mgfnondegvert}
	f(\fcone(\cT(\br, \bc),M),\z) =  \prod_{e \not\in E(\aux(M))} \frac{1}{1 - \z^{\cyc(\aux(M),e)}}.
\end{equation}

%Suppose $\br$ and $\bc$ are integer vectors and $\cT(\br, \bc)$ a non-degenerate transportation polytope. Then the multivariate generating function of $\cT(\br, \bc)$ is  
Suppose further that $\br$ and $\bc$ are integer vectors. Then the multivariate generating function of $\cT(\br, \bc)$ is  
\begin{equation}\label{equ:mgfnondeg}
	f(\cT(\br, \bc),\z) = \sum_{M \in \vert(\cT(\br, \bc))} \z^M \prod_{e \not\in E(\aux(M))} \frac{1}{1 - \z^{\cyc(\aux(M),e)}}.
\end{equation}
\end{cor}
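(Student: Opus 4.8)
\textbf{Proof proposal for Corollary \ref{cor:mgfnondeg}.}

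The plan is to assemble the statement directly from the three cited results, treating the non-degenerate case as the clean combinatorial specialization of the general totally unimodular setup. First I would record the standing hypothesis that $\cT(\br,\bc)$ is a non-degenerate transportation polytope. By the discussion in Section \ref{sec:proptrans}, every non-degenerate transportation polytope is simple, and since the constraint matrix $A_{m,n}$ is totally unimodular, $\cT(\br,\bc)$ is in fact a totally unimodular polytope in the sense of Definition \ref{defn:totuni}: every vertex is a lattice point (here I would invoke that $\cT(\br,\bc)$ is integral whenever $\br,\bc$ are integer vectors for the second half; for the first half, the vertex may be rational, but what we actually need is only that each feasible cone is unimodular, which follows from simplicity plus total unimodularity of $A_{m,n}$). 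So the feasible cone at each vertex $M$ is a simple unimodular cone.

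Next I would identify the generating rays. By Lemma \ref{lem:charray}, since $\cT(\br,\bc)$ is non-degenerate and $T=\aux(M)$ is a spanning tree of $K_{m,n}$ (Theorem \ref{thm:charvert}), the feasible cone $\fcone(\cT(\br,\bc),M)$ is generated by the rays $\{\cyc(T,e) \mid e\not\in E(T)\}$; there are exactly $mn-(m+n-1)$ such rays, matching the dimension of the polytope, consistent with the cone being simple. Then Lemma \ref{lem:unicone} applies verbatim: the MGF of this unimodular cone is the product of $1/(1-\z^{r})$ over its generating rays $r$, which is precisely the right-hand side of \eqref{equ:mgfnondegvert}. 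This establishes the vertex-wise formula.

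For the global formula \eqref{equ:mgfnondeg}, I would add the hypothesis that $\br$ and $\bc$ are integer vectors, so that $\cT(\br,\bc)$ is an integral polytope and hence totally unimodular. Now Corollary \ref{cor:totuni} (equivalently, Corollary \ref{cor:intepoly} combined with Lemma \ref{lem:unicone}) gives $f(\cT(\br,\bc),\z)=\sum_{M\in\vert(\cT(\br,\bc))}\z^{M}f(\fcone(\cT(\br,\bc),M),\z)$, and substituting the per-vertex formula \eqref{equ:mgfnondegvert} just proved yields \eqref{equ:mgfnondeg}. This is the entire argument.

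There is no real obstacle here — the corollary is a bookkeeping consequence of Lemmas \ref{lem:unicone} and \ref{lem:charray} and Corollary \ref{cor:totuni}, exactly as the sentence preceding the statement claims. The only point requiring a word of care is making sure the "totally unimodular polytope" hypothesis of Corollary \ref{cor:totuni} is genuinely in force: for \eqref{equ:mgfnondeg} this needs integrality of the vertices, which is why the integer hypothesis on $\br,\bc$ is imposed there, whereas \eqref{equ:mgfnondegvert} only needs the cone to be unimodular and so holds without it. I would state the proof in two short sentences accordingly.
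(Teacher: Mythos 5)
Your proof is correct and follows exactly the route the paper takes: the paper simply remarks that the corollary "follows immediately from Lemma \ref{lem:unicone}, Corollary \ref{cor:totuni} and Lemma \ref{lem:charray}," after noting integrality when $\br,\bc$ are integer vectors, which is precisely the assembly you carry out. Your added care in separating which hypothesis is needed for which of the two displayed formulas is a reasonable elaboration of the same argument, not a different one.
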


%	It turns out that two distinct vertices of a transportation polytope are adjacent if and only if the union of their auxiliary graphs has a unique cycle.

We also have the following corollary to Lemma \ref{lem:adjvert}.
\begin{cor}\label{cor:samefeas}
Suppose $\cT(\br, \bc)$ and $\cT(\br', \bc')$ are two transportation polytopes of order $m \times n$ satisfying \[ \vertAux(\br, \bc) = \vertAux(\br', \bc').\] Let $T \in \vertAux(\br, \bc)$, and $M_T$ and $M_T'$ be the vertices of $\cT(\br,\bc)$ and $\cT(\br',\bc')$, respectively, corresponding to $T$. Then 
\[ \fcone(\cT(\br,\bc), M_T) = \fcone(\cT(\br', \bc'), M_T').\]
\end{cor}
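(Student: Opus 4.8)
The plan is to show that the two feasible cones are generated by the same set of rays, invoking the description of generating rays from Lemma \ref{lem:charray}. The key observation is that the generating rays $\cyc(T', \be)$ depend only on the combinatorial data of the pair $(T', \be)$ — namely the spanning forest $T' = \aux(M_T)$ and the edge set $\be$ satisfying the conditions of Definition \ref{defn:cyc} — and not on the actual numerical values of $\br, \bc$ or on the entries of the vertex $M_T$. Since by hypothesis $\vertAux(\br,\bc) = \vertAux(\br',\bc')$, the vertex $M_T$ of $\cT(\br,\bc)$ and the vertex $M_T'$ of $\cT(\br',\bc')$ have the \emph{same} auxiliary graph, namely $T$.

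First I would fix a spanning forest $T \in \vertAux(\br,\bc) = \vertAux(\br',\bc')$, so that $\aux(M_T) = T = \aux(M_T')$ by Theorem \ref{thm:charvert} (more precisely, by the bijection it induces). Next I would apply Lemma \ref{lem:charray} to both vertices: the feasible cone $\fcone(\cT(\br,\bc), M_T)$ is generated by the set $\{\cyc(T, \be) \mid (T,\be) \text{ satisfies the conditions in Definition \ref{defn:cyc}}\}$, and likewise $\fcone(\cT(\br',\bc'), M_T')$ is generated by exactly the same set $\{\cyc(T,\be)\}$, since the generating set in \eqref{equ:rays} is expressed purely in terms of $T$. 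Therefore the two cones have identical generating sets, hence are equal as cones. This completes the argument.

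The only subtlety — and the one place where I would be careful rather than rely on anything deep — is making sure the indexing of vertices by auxiliary graphs is unambiguous: a priori ``the vertex corresponding to $T$'' needs the injectivity half of the bijection in Theorem \ref{thm:charvert}, so that $M_T$ and $M_T'$ are genuinely well-defined, and one needs Lemma \ref{lem:samevrt} implicitly in the background for the statement to even make sense. But since $T \in \vertAux(\br,\bc)$ by assumption, such a vertex $M_T$ exists and is unique, and similarly for $M_T'$; no genuine obstacle arises here. I do not expect any real difficulty in this proof — it is a direct consequence of the fact that the ray description in Lemma \ref{lem:charray} is a function of the auxiliary graph alone.

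\begin{proof}
Let $T \in \vertAux(\br, \bc) = \vertAux(\br', \bc')$. By Theorem \ref{thm:charvert}, $T$ is a spanning forest of $K_{m,n}$, and $\aux(M_T) = T = \aux(M_T')$. By Lemma \ref{lem:charray} applied to the vertex $M_T$ of $\cT(\br,\bc)$, the feasible cone $\fcone(\cT(\br,\bc), M_T)$ is generated by the set
\[
	\{ \cyc(T,\be) \ | \ \text{$(T, \be)$ satisfies the conditions in Definition \ref{defn:cyc}} \}.
\]
Applying Lemma \ref{lem:charray} to the vertex $M_T'$ of $\cT(\br',\bc')$, whose auxiliary graph is also $T$, we see that $\fcone(\cT(\br',\bc'), M_T')$ is generated by precisely the same set. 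Since the two cones are generated by the same set of rays, they are equal.
\end{proof}
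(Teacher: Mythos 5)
Your proof is correct, and it is arguably the cleanest route to the result. You invoke Lemma \ref{lem:charray}, which already says that the set of generating rays of $\fcone(\cT(\br,\bc),M)$ is a function of $T=\aux(M)$ alone (not of $\br,\bc$ or of the numerical entries of $M$); once you observe that $\aux(M_T)=T=\aux(M_{T}')$, the two cones are generated by the identical set $\{\cyc(T,\be)\}$ and so coincide. Note that with this argument the full hypothesis $\vertAux(\br,\bc)=\vertAux(\br',\bc')$ is actually not needed --- you only use that $T$ lies in both sets --- so your argument proves a slightly stronger statement.

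The paper gives no proof and only remarks that this is a ``corollary to Lemma \ref{lem:adjvert}'', which suggests the intended argument goes through adjacency: since the two polytopes have the same set of auxiliary graphs, $M_T$ and $M_T'$ have the same set of neighbours (identified via auxiliary graphs), and by Lemma \ref{lem:adjvert} the edge directions out of $M_T$ and out of $M_T'$ are determined by the same unique cycles, hence the feasible cones (generated by those edge directions) agree. Both routes are sound, but yours is more direct since Lemma \ref{lem:charray} hands you the generating set in one step rather than passing through the adjacency structure, and it does not rely on the equality of the entire vertex-auxiliary sets.
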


\section{A perturbation for transportation polytopes}\label{sec:unipert} We describe a perturbation that works for any transportation polytope.

\begin{lem}\label{lem:unipert}
Suppose $\br = (r_1, r_2, \dots, r_m)$ and $\bc = (c_1, c_2, \dots, c_n)$ are two rational positive vectors. We define
\[ \br(t) = (r_1 - t, \dots, r_m - t), \ \bc(t) = (c_1, \dots, c_{n-1}, c_n-mt),\ 0 \le t < \frac{1}{K m},\]
 where $K$ is the lowest common multiple of the denominators of $\br$ and $\bc.$ Then we have the following:%for any $t \in (0, \frac{1}{K m})$, the transportation polytope $\cT(\br(t), \bc(t))$ is non-degenerate.
 \begin{ilist}
\itm For any $t \in (0, \frac{1}{K m})$, the transportation polytope $\cT(\br(t), \bc(t))$ is non-degenerate. 
\itm%$\{ \cT(\br(t), \bc(t)) \ | \ t \in (0, \frac{1}{K m})\}$ is a family of non-degenerate central transportation polytopes satisfying the condition of Lemma \ref{lem:func}. 
For any $t \in (0, \frac{1}{K m}),$ the set $\vertAux(\br(t),\bc(t))$ is independent of $t.$
\itm $\{ \cT(\br(t), \bc(t)) \ | \ t \in (0, \frac{1}{K m})\}$ is a family of transportation polytopes satisfying the condition of Theorem \ref{thm:func}.
\end{ilist}
\end{lem}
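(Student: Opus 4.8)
The plan is to verify the three claims in order, using Theorem~\ref{thm:chardeg} for (i), a connectivity/rank argument for (ii), and then assembling (iii) from (i) and (ii) together with the framework of Theorem~\ref{thm:func}.

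For part~(i), I would apply the characterization of non-degeneracy in Theorem~\ref{thm:chardeg}. Fix $t \in (0, \frac{1}{Km})$ and suppose $I \subseteq [m]$, $J \subseteq [n]$ are nonempty with $\sum_{i \in I} r_i(t) = \sum_{j \in J} c_j(t)$. Writing $r_i(t) = r_i - t$ and $c_j(t) = c_j - mt\,[j=n]$, the equation becomes $\sum_{i\in I} r_i - |I|\,t = \sum_{j \in J} c_j - mt\,[n \in J]$, i.e. $\sum_{i\in I} r_i - \sum_{j\in J} c_j = (|I| - m\,[n\in J])\,t$. The left side is a rational number whose denominator divides $K$, so it is either $0$ or has absolute value at least $1/K$; since $0 < t < 1/(Km)$ and $|\,|I| - m\,[n\in J]\,| \le m$, the right side has absolute value strictly less than $1/K$. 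Hence both sides vanish: $\sum_{i\in I} r_i = \sum_{j\in J} c_j$ and $|I| = m\,[n\in J]$. The second equation forces either ($|I| = 0$), which is excluded, or ($|I| = m$ and $n \in J$). So $I = [m]$, and then $\sum_{j\in J} c_j = \sum_i r_i = \sum_{j=1}^n c_j$ forces $J = [n]$ since the $c_j$ are positive. Thus $\cT(\br(t),\bc(t))$ is non-degenerate.

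For part~(ii), I would argue that the set $\vertAux(\br(t),\bc(t))$ of spanning trees of $K_{m,n}$ arising as auxiliary graphs of vertices does not move with $t$. By Theorem~\ref{thm:charvert}, in the non-degenerate case this set consists exactly of the spanning trees $T$ of $K_{m,n}$ that are auxiliary graphs of some point of $\cT(\br(t),\bc(t))$; equivalently, of those spanning trees $T$ such that the unique matrix $M$ supported on $E(T)$ with the prescribed margins has all entries strictly positive. For a fixed spanning tree $T$, the entries of this $M$ are obtained by solving a triangular linear system (peeling off leaves), so each entry is an affine-linear function of $t$ — in fact of the form (a fixed rational combination of the $r_i,c_j$) $+$ (an integer multiple of $t$), where the $t$-coefficient is bounded in absolute value by something like $m$. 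The same denominator/size estimate as in part~(i): at $t=0$ the entry $M(i,j)$ equals a rational with denominator dividing $K$, hence is $0$ or $\ge 1/K$ in absolute value; for $0 < t < 1/(Km)$ the perturbation changes it by less than $1/K$. Therefore $M(i,j) > 0$ for all $t \in (0, 1/(Km))$ if and only if $M(i,j) \ge 0$ at $t = 0$ with the additional property that whenever $M(i,j) = 0$ at $t=0$ the $t$-coefficient is positive — a condition independent of $t$. Hence whether $T \in \vertAux(\br(t),\bc(t))$ is independent of $t$ in the open interval, which is the claim. (I should double-check the sign bookkeeping on the peeling recursion; that is where the argument is most delicate, and I expect this to be the main obstacle.)

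For part~(iii), I need to check the hypotheses of Theorem~\ref{thm:func}: a fixed constraint matrix $A$, a continuous $\b(t)$ with $\b(0) = \b$, each $P(t)$ with $t \neq 0$ nonempty with a fixed number $\ell$ of vertices, and the feasible cone at the $j$-th vertex independent of $t$. The transportation polytope $\cT(\br',\bc')$ is cut out by $A_{m,n}\x = (\br'^T,\bc'^T)^T$, $\x \ge 0$; rewriting the equalities as pairs of inequalities gives a fixed matrix $A$ and a right-hand side that is a continuous (indeed affine) function of $t$, with value at $t=0$ giving $\cT(\br,\bc)$. Each $\cT(\br(t),\bc(t))$ is nonempty because the margins are positive and sum correctly for $t$ small (one exhibits a feasible matrix, e.g. the independent-coupling matrix $M(i,j) = r_i(t)c_j(t)/S$ with $S = \sum_i r_i(t)$). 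By part~(i) these polytopes are non-degenerate, hence simple, so each has the same number $\ell := |\vertAux(\br(t),\bc(t))|$ of vertices by parts~(ii) and Theorem~\ref{thm:charvert}; indexing the vertices by the fixed set $\vertAux$, the $j$-th vertex of $\cT(\br(t),\bc(t))$ has auxiliary graph the $j$-th tree $T_j$, and by Lemma~\ref{lem:charray} (non-degenerate case) its feasible cone is $\Cone\{\cyc(T_j,e) : e \notin E(T_j)\}$, which depends only on $T_j$ and not on $t$. This verifies every hypothesis of Theorem~\ref{thm:func}, completing part~(iii). The only genuinely subtle point in this last part is confirming that distinct $t$ really give the "same" labelled list of vertices with literally the same feasible cones (not merely combinatorially isomorphic ones), which follows from Corollary~\ref{cor:samefeas} applied with $\vertAux(\br(t),\bc(t)) = \vertAux(\br(t'),\bc(t'))$ from part~(ii).
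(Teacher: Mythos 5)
Your proposal is correct and follows essentially the same route as the paper: part (i) via the same denominator-versus-perturbation estimate (the paper first rescales to assume $K=1$ and phrases it with cofractional parts, but the logic is identical), part (ii) by showing that for a fixed spanning tree the unique margin-respecting matrix supported on it depends affinely on $t$ with small $t$-coefficients, and part (iii) by assembling (i), (ii), Theorem~\ref{thm:charvert}, Lemma~\ref{lem:charray}, and Corollary~\ref{cor:samefeas}. The one spot you flag as ``delicate'' --- bounding the $t$-coefficients $b_{ij}$ by $m$ --- is exactly where the paper supplies more detail: rooting $T$ at $w_n$, it writes the vertex explicitly as $M_T(t)(i,j)=M_T(i,j)-l(T_{u_i})\,t$ (if $w_j$ is the parent of $u_i$) or $M_T(i,j)+l(T_{w_j})\,t$ (if $u_i$ is the parent of $w_j$), from which $|b_{ij}|\le m$ and the positivity on $(0,\tfrac{1}{Km})$ are immediate. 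Your peeling argument does give the same bound, but to close it cleanly you should note that the $t$-derivative of $M_T(t)$ is the unique flow on the tree $T$ with left-vertex supplies $(-1,\dots,-1)$ and right-vertex supplies $(0,\dots,0,-m)$; since the flow across any tree edge equals the net supply on one side of the induced edge-cut, it is bounded in absolute value by the total supply $m$. With that observation your part (ii) is complete and equivalent to the paper's.
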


\begin{rem}
	The perturbation we defined in Lemma \ref{lem:unipert} perturbs the original polytope by $t (-1, -1, \dots, -1, 0, \dots, 0, -m)^T.$ The vector $(-1, -1, \dots, -1, 0, \dots, 0, -m)^T$ is in the cone spanned by the column vectors of the constraint matrix $A_{m,n}.$ We expect that for any generic vector $\bp$ in the cone spanned by the columns of $A_{m,n},$ perturbing the original polytope by $t \bp$ works. However, we choose this particular one since it gives nice combinatorial results for the central transportation polytopes of order $kn \times n$, which we will discuss in the next section. 
\end{rem}

We give the following two definitions before the proof of the above lemma.
\begin{defn}
	Let $x$ be a real number. The {\it floor} or {\it integer part} of $x$, denoted by $\lfloor x \rfloor$ is the biggest integer that is not greater than $x,$ and the {\it fractional part} of $x$, denoted by $\fract(x),$ is $x - \lfloor x \rfloor.$
	The {\it ceiling} of $x$, denoted by $\lceil x \rceil$ is the smallest integer that is not smaller than $x,$ and the {\it co-fractional part} of $x,$ denoted by $\cofrac(x),$ is $\lceil x \rceil - x.$
\end{defn}

\begin{defn}\label{defn:degree}
	Let $G$ be a subgraph of the complete bipartite graph $K_{m,n}.$ 
	
	Let $d_j$ be the number of edges in $G$ connecting to $w_j,$ the $j$th right vertex of $K_{m,n}.$ We call $(d_1, d_2, \dots, d_n)$ the {\it right degree sequence} of $G$ and write $\delta(G) = (d_1, \dots, d_n).$

%	We also define \[ r(G) := \text{the number of $w_j$'s in $G$, i.e., the number of right vertices in $G$}.\]
	
	We also define \[ l(G) := \text{the number of $u_i$'s in $G$, i.e., the number of left vertices in $G$}.\]

	For convenience, for any spanning tree $T$ of $K_{m,n},$ we consider $T$ as a rooted tree rooted at $w_n,$ the $n$th right vertex. For any vertex $v$ of $T,$ we denote by $T_v$ the subtree of $T$ rooted at $v.$ 
\end{defn}

\begin{proof}[Proof of Lemma \ref{lem:unipert}]
Because $\cT(K\br(t), K\bc(t))$ is a dilation of $\cT(\br(t), \bc(t)),$ the polytopes $\cT(K\br(t), K\bc(t))$ and $\cT(\br(t), \bc(t))$ have exactly the same combinatorial structures. Therefore, without loss of generality, we can assume that $\br$ and $\bc$ are integer vectors, and $K = 1.$

\begin{ilist}
\itm Suppose $\emptyset \neq I \subseteq [m]$ and $\emptyset \neq J \subseteq [n]$ are two index sets satisfying 
\begin{equation}\label{equ:pfnd}
	\sum_{i \in I} \br_i(t) = \sum_{j \in J} \bc_j(t).
\end{equation}
The co-fractional part of the left hand side of \eqref{equ:pfnd} is $|I| t \neq 0.$ Hence, $n \in J,$ because otherwise the right hand side of \eqref{equ:pfnd} is an integer. Then the co-fractional part of the right hand side of \eqref{equ:pfnd} is $mt.$ Therefore, $|I| = m$ and $I = [m].$ This implies that $J = [n].$
Therefore, by Theorem \ref{thm:chardeg}, the polytope $\cT(\br(t), \bc(t))$ is non-degenerate.

\itm Let $t_0 \in (0, \frac{1}{Km}=\frac{1}{m}),$ and let $T \in \vertAux(\br(t_0), \bc(t_0)).$ By Theorem \ref{thm:charvert}, $T$ is a spanning tree of $K_{m,n}.$ It suffices to show that for any $t \in (0, \frac{1}{m}),$ the tree $T$ is the auxiliary graph of a vertex of $\cT(\br(t), \bc(t)).$ Let $M_T(t_0)$ be the vertex of $\cT(\br(t_0), \bc(t_0))$ corresponding to the tree $T.$ 

We claim that, considering $T$ a rooted tree rooted at $w_n,$
\begin{align*}
	\cofrac(M_T(t_0)(i,j)) =& \ l(T_{u_i}) \ t_0, \quad \text{ if $w_j$ is the parent of $u_i$ in $T$;} \\
	\fract(M_T(t_0)(i,j)) =& \ l(T_{w_j}) \ t_0, \quad \text{ if $u_i$ is the parent of $w_j$ in $T$;}
\end{align*}
and the matrix $M_T$ whose entries defined by the following equation is a vertex of $\cT(\br, \bc).$
\begin{equation}\label{equ:MT}
	M_T(i,j)  = \begin{cases}\lceil M_T(t_0)(i,j) \rceil, & \text{ if $w_j$ is the parent of $u_i$ in $T$;} \\
		\lfloor M_T(t_0)(i,j) \rfloor, & \text{ if $u_i$ is the parent of $w_j$ in $T$;} \\
        0, &  \mbox{otherwise}.\end{cases}
\end{equation}
The claim can be proved by induction on hook lengths of vertices of $T.$ (Recall the {\it hook length} of a vertex $v$ in a rooted tree is the number of descendants of $v.$) We define the matrix $M_T(t)$ as follows:
	\begin{equation}\label{equ:MTt}
	M_T(t)(i,j)  = \begin{cases} M_T(i,j) - l(T_{u_i}) \ t, & \text{ if $w_j$ is the parent of $u_i$ in $T$;} \\
		M_T(i,j) +  l(T_{w_j}) \ t, & \text{ if $u_i$ is the parent of $w_j$ in $T$;} \\
        0, &  \mbox{otherwise}.\end{cases}
\end{equation}
It is clear that $M_T(t)$ is a vertex of $\cT(\br(t), \bc(t))$ with auxiliary graph $T.$

See Figure \ref{fig:exvalue} for an example. The tree on the right side of the figure is the same one as the one on the left; it is just drawn as it is a rooted tree rooted at $w_3.$ The solid lines are the edges where some $w_j$ is the parent and the dashed lines are the edges where some $u_i$ is the parent. The number next to an edge $\{u_i,w_j\}$ is the entry of $M_T(t)(i,j).$
\begin{figure}[htb]
\centering
\scalebox{0.9}{\input{exvalue.pstex_t}}
\caption{}
\label{fig:exvalue}
\end{figure}

\itm It follows from (ii) and Corollary \ref{cor:samefeas}.
\end{ilist}
\end{proof}

It turns out we can use auxiliary graphs to determine which vertices of the perturbed polytopes $\cT(\br(t), \bc(t))$ (described in Lemma \ref{lem:unipert}) converge to a given vertex of $\cT(\br, \bc).$
\begin{lem}\label{lem:uniquevert}
	Assume the conditions of Lemma \ref{lem:unipert}. Let $t_0 \in (0, \frac{1}{Km}),$ and let $T \in \vertAux(\br(t_0),\bc(t_0)).$ Define $M_T$ and $M_T(t)$ as in \eqref{equ:MT} and \eqref{equ:MTt}, respectively. Then
	\begin{ilist}
		\itm $\lim_{t \to 0} M_T(t) = M_T.$
		\itm $M_T$ is the unique vertex of $\cT(\br,\bc)$ satisfying that its auxiliary graph is a subgraph of $T.$
	\end{ilist}
\end{lem}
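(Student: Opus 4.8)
The plan is to prove (i) and then derive (ii) almost for free. For part (i), recall that the matrix $M_T(t)$ was explicitly constructed in \eqref{equ:MTt}, so one can simply take the limit entrywise: for each edge $e_{i,j} \in E(T)$ the entry $M_T(t)(i,j)$ is one of $M_T(i,j) - l(T_{u_i})\,t$ or $M_T(i,j) + l(T_{w_j})\,t$, each of which converges to $M_T(i,j)$ as $t \to 0$; for $e_{i,j} \notin E(T)$ the entry is identically $0$. Hence $\lim_{t\to 0} M_T(t) = M_T$. It remains to observe that $M_T$ is genuinely a vertex of $\cT(\br,\bc)$: this was already asserted in the proof of Lemma \ref{lem:unipert} (the matrix defined by \eqref{equ:MT} is a vertex of $\cT(\br,\bc)$), with $\aux(M_T)$ a subgraph of $T$ since the nonzero entries of $M_T$ occur only on edges of $T$. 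So the convergence statement holds with the limit being an actual vertex.

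For part (ii), existence is immediate from (i): $M_T$ is a vertex of $\cT(\br,\bc)$ and $\aux(M_T) \subseteq T$. For uniqueness, suppose $N$ is any vertex of $\cT(\br,\bc)$ with $\aux(N) \subseteq T$. Since $T$ is a spanning tree of $K_{m,n}$ (by Theorem \ref{thm:charvert} applied to the non-degenerate polytope $\cT(\br(t_0),\bc(t_0))$, using Lemma \ref{lem:unipert}(i)), both $\aux(M_T)$ and $\aux(N)$ are subgraphs of the single spanning tree $T$. Lemma \ref{lem:samevrt} then forces $M_T = N$, which gives uniqueness.

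The only point requiring a little care is making sure the hypotheses of Lemma \ref{lem:samevrt} are exactly met --- namely that $M_T$ and $N$ are both \emph{vertices} of $\cT(\br,\bc)$ (not merely points), and that $T$ is a genuine spanning tree rather than just a spanning forest. The first is handled by the construction in the proof of Lemma \ref{lem:unipert}, and the second follows because $\cT(\br(t_0),\bc(t_0))$ is non-degenerate, so by Theorem \ref{thm:charvert} its vertex auxiliary graphs are spanning trees. I expect no real obstacle here; the main subtlety is simply bookkeeping to confirm that $M_T$, defined via ceilings and floors in \eqref{equ:MT}, lands in $\cT(\br,\bc)$ --- but that verification has already been carried out (by induction on hook lengths) in the proof of Lemma \ref{lem:unipert}, so in the present lemma it can be cited rather than repeated.
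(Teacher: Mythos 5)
Your proof is correct and follows the same route as the paper: take the entrywise limit of $M_T(t)$ using the explicit formulas \eqref{equ:MT} and \eqref{equ:MTt}, note that $M_T$ is a vertex of $\cT(\br,\bc)$ with $\aux(M_T)\subseteq T$ (as established in the proof of Lemma \ref{lem:unipert}), and invoke Lemma \ref{lem:samevrt} for uniqueness. You spell out slightly more detail than the paper (in particular the bookkeeping check that $T$ is genuinely a spanning tree, via Theorem \ref{thm:charvert} and the non-degeneracy in Lemma \ref{lem:unipert}(i), so that Lemma \ref{lem:samevrt} applies), but the argument is the same.
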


\begin{proof}
	It is clear that $M_T(t)$ converges to $M_T,$ and $M_T$ is a vertex of $
	\cT(\br,\bc)$ satisfying that its auxiliary graph is contained in $T.$ It is left to show the uniqueness. However, the uniqueness follows from Lemma \ref{lem:samevrt}.
\end{proof}

\begin{ex}\label{ex:perturb}
Let $\br=\bc=(1,1,2)$ and $M_0, M_1, \dots, M_6$ be the vertices of $\cT(\br,\bc)$ as defined in Example \ref{ex:aux}.
	Then we define
	\[ \br(t) = (1-t, 1-t, 2-t), \ \bc(t) = (1, 1, 2-3t), \ 0 \le t < \frac{1}{3}.\]
	One can check that $\cT(\br(t), \bc(t))$ have $18$ vertices, which we list in the following table.

\[	\begin{tabular}{|c||c|}
		\hline
		& \\
	$M_i$ & Vertices of $\cT(\br(t), \bc(t))$ that converge to $M_i$ as $t \to 0$ \\[2mm]
	\hline
	$M_0$ & $\setlength{\arraycolsep}{2pt}\renewcommand{\arraystretch}{0.9}\begin{pmatrix}1-2t&t&0 \\ 0&1-t&0 \\ 2t&0&2-3t \end{pmatrix}, \begin{pmatrix}1-t&0&0 \\ t&1-2t&0 \\ 0&2t&2-3t \end{pmatrix}, \begin{pmatrix}1-t&0&0 \\ 0&1-t&0 \\ t&t&2-3t \end{pmatrix}$ \\[5mm]
	$M_1$ & $\setlength{\arraycolsep}{2pt}\renewcommand{\arraystretch}{0.9}\begin{pmatrix}t&1-2t&0 \\ 1-t&0&0 \\ 0&2t&2-3t \end{pmatrix}, \begin{pmatrix}0&1-t&0 \\ 1-2t&t&0 \\ 2t&0&2-3t \end{pmatrix}, \begin{pmatrix}0&1-t&0 \\ 1-t&0&0 \\ t&t&2-3t \end{pmatrix}$ \\[5mm]
	$M_2$ & $\setlength{\arraycolsep}{2pt}\renewcommand{\arraystretch}{0.9}\begin{pmatrix}1-t&0&0 \\ t&0&1-2t \\ 0&1&1-t \end{pmatrix}, \begin{pmatrix}1-t&0&0 \\ 0&0&1-t \\ t&1&1-2t \end{pmatrix}$ \\[5mm]
	$M_3$ & $\setlength{\arraycolsep}{2pt}\renewcommand{\arraystretch}{0.9}\begin{pmatrix} 0&t&1-2t \\ 0&1-t&0 \\ 1&0&1-t \end{pmatrix}, \begin{pmatrix}0&0&1-t \\ 0&1-t&0 \\ 1&t&1-2t \end{pmatrix}$ \\[5mm]
	$M_4$ & $\setlength{\arraycolsep}{2pt}\renewcommand{\arraystretch}{0.9}\begin{pmatrix}0&1-t&0 \\ 0&t&1-2t \\ 1&0&1-t \end{pmatrix}, \begin{pmatrix}0&1-t&0 \\ 0&0&1-t \\ 1&t&1-2t \end{pmatrix}$ \\[5mm]
	$M_5$ & $\setlength{\arraycolsep}{2pt}\renewcommand{\arraystretch}{0.9}\begin{pmatrix} t&0&1-2t \\ 1-t&0&0 \\ 0&1&1-t \end{pmatrix}, \begin{pmatrix}0&0&1-t \\ 1-t&0&0 \\ t&1&1-2t \end{pmatrix}$ \\[5mm]
		$M_6$ & $\setlength{\arraycolsep}{2pt}\renewcommand{\arraystretch}{0.9}\begin{pmatrix}0&0&1-t \\ 0&t&1-2t \\ 1&1-t&0 \end{pmatrix}, \begin{pmatrix} 0&t&1-2t \\ 0&0&1-t \\ 1&1-t&0 \end{pmatrix}, \begin{pmatrix}0&0&1-t \\ t&0&1-2t \\ 1-t&1&0 \end{pmatrix}, \begin{pmatrix} t&0&1-2t \\ 0&0&1-t \\ 1-t&1&0 \end{pmatrix}$ \\
			\hline
	\end{tabular}
	\]
The auxiliary graph of each of the 18 vertices gives a spanning tree of $K_{3,3}.$ It is clear that $\vertAux(\br(t), \bc(t)),$ the set of these 18 spanning trees, is independent of $t.$

One can tell which vertices of $\cT(\br(t), \bc(t))$ converge to $M_i$ directly from the description of the vertices. However, we can also determine this using Lemma \ref{lem:uniquevert}/(ii). For example, the auxiliary graph of $M_0$ is $T_0$ whose edge set is \[E(T_0) = \{ e_{1,1}, e_{2,2}, e_{3,3}\}.\] %It is clear that the auxiliary graphs of the three vertices of $\cT(\br(t), \bc(t))$ that converge to $M_0$ have to contain $T_0.$ However, if one checks the auxiliary graphs of the other 15 vertices of $\cT(\br(t), \bc(t)),$ neither of them contains $T_0.$
If we examine the 18 spanning trees in $\vertAux(\br(t), \bc(t)),$ three of them contains $T_0.$ The edge sets of these three trees are
\[ \{e_{1,1}, e_{1,2}, e_{2,2}, e_{3,1}, e_{3,3}\}, \{e_{1,1}, e_{2,1}, e_{2,2},e_{3,2}, e_{3,3}\}, \{e_{1,1}, e_{2,2}, e_{3,1}, e_{3,2}, e_{3,3}\}. \]
These are precisely the auxiliary graphs of the three vertices that converge to $M_0.$ 

\end{ex}

Lemma \ref{lem:uniquevert} tells us which vertices of the perturbed transportation polytopes $\cT(\br(t),\bc(t))$ converge to a given vertex $M$ of the original transportation polytope $\cT(\br,\bc):$ 
\[ \lim_{t \to 0} M_T(t) = M \ \Longleftrightarrow \ \text{$\aux(M)$ is a subgraph of $T$}.\]
Now we can describe the MGF of the feasible cone of each vertex of $\cT(\br,\bc)$
\begin{cor}\label{cor:PertAux}
Assume the conditions of Lemma \ref{lem:unipert}. Let $t \in (0, \frac{1}{Km}).$
For any vertex $M$ of $\cT(\br, \bc),$ let
\begin{equation}\label{equ:pertaux}
	\PertAux(M) := \{ T \in \vertAux(\br(t), \bc(t)) \ | \ \aux(M) \text{ is a subgraph of } T\}
\end{equation}
be the set of auxiliary graphs of vertices of the perturbed transportation polytope $\cT(\br(t),\bc(t))$ that contain the auxiliary graph of $M$ as a subgraph. 
	
Then
	\begin{align}
		[ \fcone(\cT(\br,\bc), M) ] \equiv& \sum_{T \in \PertAux(M)} [ \fcone(\cT(\br(t),\bc(t)), M_T(t))] \label{equ:inddeg} \\
		 & \quad \quad \quad \quad \quad \quad \text{ modulo polyhedra with lines,} \nonumber 
	\end{align}
	where $M_T(t)$ is defined as in \eqref{equ:MTt}.

Hence, 
\begin{equation}\label{equ:mgffeas} f(\fcone(\cT(\br,\bc),M), \z) =\sum_{T \in \PertAux(M)} \prod_{e \not\in E(T)} \frac{1}{1 - \z^{\cyc(T,e)}}. 
\end{equation}
\end{cor}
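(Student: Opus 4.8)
The plan is to obtain Corollary \ref{cor:PertAux} by feeding the perturbation of Lemma \ref{lem:unipert} into the general perturbation principle of Theorem \ref{thm:func}, and then reading off the combinatorial indexing set via Lemma \ref{lem:uniquevert}. First I would fix $t \in (0, \frac{1}{Km})$ and invoke Lemma \ref{lem:unipert}(iii), which says exactly that the family $\{\cT(\br(t),\bc(t))\}$ satisfies the hypotheses of Theorem \ref{thm:func} with $P = \cT(\br,\bc)$. Applying Theorem \ref{thm:func}(ii) to the vertex $M$ then gives
\[ [\fcone(\cT(\br,\bc),M)] \equiv \sum_{w} [\fcone(\cT(\br(t),\bc(t)),w)] \quad \text{modulo polyhedra with lines,} \]
where the sum runs over those vertices $w$ of $\cT(\br(t),\bc(t))$ that converge to $M$ as $t \to 0$.

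Next I would replace the indexing set $\{w\}$ by $\PertAux(M)$. By Lemma \ref{lem:unipert}(i), $\cT(\br(t),\bc(t))$ is non-degenerate, so by Theorem \ref{thm:charvert} its vertices are in bijection with the spanning trees in $\vertAux(\br(t),\bc(t))$; writing $M_T(t)$ for the vertex corresponding to $T$ (as in \eqref{equ:MTt}), Lemma \ref{lem:uniquevert} tells us that $\lim_{t\to0}M_T(t) = M_T$ and that $M_T$ is the unique vertex of $\cT(\br,\bc)$ whose auxiliary graph is a subgraph of $T$. Hence $M_T(t) \to M$ if and only if $\aux(M) \subseteq T$, i.e.\ if and only if $T \in \PertAux(M)$; one should also record that, by Lemma \ref{lem:unipert}(ii), the set $\PertAux(M)$ does not depend on $t$. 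Substituting this identification of convergence classes into the congruence above yields \eqref{equ:inddeg}.

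Finally, I would pass to multivariate generating functions. Since $f(\cdot,\z)$ is a valuation that vanishes on polyhedra with lines (Theorem \ref{thm:rational}), \eqref{equ:inddeg} gives $f(\fcone(\cT(\br,\bc),M),\z) = \sum_{T \in \PertAux(M)} f(\fcone(\cT(\br(t),\bc(t)),M_T(t)),\z)$. Because $\cT(\br(t),\bc(t))$ is non-degenerate and $\aux(M_T(t)) = T$, Lemma \ref{lem:charray} (equivalently Corollary \ref{cor:mgfnondeg}) identifies each summand on the right as $\prod_{e \notin E(T)}\frac{1}{1-\z^{\cyc(T,e)}}$, a quantity depending only on $T$; this is precisely \eqref{equ:mgffeas}.

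I do not expect a serious obstacle here: the substantive work has already been carried out in Theorem \ref{thm:func}, Lemma \ref{lem:unipert}, and Lemma \ref{lem:uniquevert}, so the corollary is essentially an assembly of these statements. The one point that genuinely needs care is the step identifying the convergence classes --- that the vertices of $\cT(\br(t),\bc(t))$ limiting to $M$ are exactly $\{M_T(t) : T \in \PertAux(M)\}$ --- and this rests on the uniqueness assertion in Lemma \ref{lem:uniquevert}(ii), which itself follows from Lemma \ref{lem:samevrt}.
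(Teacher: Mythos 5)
Your proposal is correct and follows precisely the same route as the paper's own (very terse) proof, which simply cites Lemma \ref{lem:unipert}(iii), Theorem \ref{thm:func}, and Lemma \ref{lem:uniquevert} for \eqref{equ:inddeg}, and then Lemma \ref{lem:unipert}(i) together with \eqref{equ:mgfnondegvert} for \eqref{equ:mgffeas}. You have correctly unpacked each of these citations, including the one subtle point --- identifying the convergence classes via the uniqueness in Lemma \ref{lem:uniquevert}(ii) --- and no step is missing.
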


\begin{proof}
	Formula \eqref{equ:inddeg} follows from Lemma \ref{lem:unipert}/(iii), Theorem \ref{thm:func}, and Lemma \ref{lem:uniquevert}. 

	Formula \eqref{equ:mgffeas} follows from \eqref{equ:inddeg}, Lemma \ref{lem:unipert}/(i), and Formula \eqref{equ:mgfnondegvert}. 
	%To prove Formula \eqref{equ:inddeg}, it is sufficient to check that $\PertAux(M)$ is the set of all the $T$ such that $M_T(t)$ converges to $M.$ By the proof of Lemma \ref{lem:unipert}, it's clear that if $M_T(t)$ converges to $M$, the auxiliary graph of $M$ is a subgraph of $T.$ Now suppose $T \in \vertAux(\cT(\br(t_0), \bc(t_0)))$ such that $\aux(M)$ is a subgraph of $T.$ Suppose $M_T(t)$ converges to a vertex $N$ of $\cT(\br,\bc).$ Then $\aux(N)$ is a subgraph of $T.$ However, by Lemma \ref{lem:samevrt}, $M$ and $N$ are the same vertex. Therefore, Formula \eqref{equ:inddeg} follows. 
\end{proof}

\begin{ex}
	We assume the same setup as in Examples \ref{ex:aux}, \ref{ex:notsimple} and \ref{ex:perturb}. As we discussed in Example \ref{ex:notsimple}, the feasible cone of $\cT(\br,\bc)$ at $M_0$ is spanned by five rays and is not a simple cone. Therefore, it is hard to compute its MGF directly. Applying the perturbation we discuss in this section, as shown in Example \ref{ex:perturb}, we see that there are three vertices of the perturbed transportation polytope $\cT(\br(t), \bc(t))$ that converge to $M_0.$ Let $K_1, K_2$ and $K_3$ be the feasible cones of $\cT(\br(t), \bc(t)$ at these three vertices. Then \eqref{equ:inddeg} says that 
		\[ [ \fcone(\cT(\br,\bc), M_0) ] \equiv [K_1] + [K_2] + [K_3], \quad \text{ modulo polyhedra with lines.}  \] 
Hence,
\[ f(\fcone(\cT(\br,\bc), M_0), \z) = f(K_1, \z) + f(K_2, \z) + f(K_3, \z).\]
Since $\cT(\br(t),\bc(t))$ is non-degenerate, each $K_i$ is unimodular and one can obtain a formula for $f(K_i, \z)$ quickly from the set of generating rays of $K_i$. 
	Knowing the set $\vertAux(\br(t), \bc(t))$ of the auxiliary graphs of the vertices of $\cT(\br(t), \bc(t))$, we are able to figure out the rays that generate the feasible cone of $\cT(\br(t), \bc(t))$ at each of its vertices using Lemma \ref{lem:charray} for non-degenerate polytopes. In particular, we are able to describe feasible cones $K_1, K_2$ and $K_3,$ and then obtain a formula for $f(\fcone(\cT(\br,\bc), M_0), \z).$
\end{ex}

We can also describe the MGF of an integral transportation polytope.
\begin{cor}\label{cor:unipert}
	Assume the conditions of Lemma \ref{lem:unipert} and further assume that $\br$ and $\bc$ are integer vectors. Let $t \in (0, \frac{1}{m}).$
Then the multivariate generating function of $\cT(\br, \bc)$ is  
\begin{equation}\label{equ:mgfdeg}
	f(\cT(\br, \bc),\z) =\sum_{T \in \vertAux(\br(t), \bc(t))} \z^{M_T} \prod_{e \not\in E(T)} \frac{1}{1 - \z^{\cyc(T,e)}}, 
\end{equation}
where $M_T$ is defined as in \eqref{equ:MT}, or equivalently, $M_T$ is the unique vertex of $\cT(\br,\bc)$ satisfying that its auxiliary graph is a subgraph of $T.$
\end{cor}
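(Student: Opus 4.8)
The plan is to combine Corollary~\ref{cor:intepoly} with the description of the individual feasible cones given in Corollary~\ref{cor:PertAux}, and then rearrange a double sum into a single sum indexed by the trees in $\vertAux(\br(t),\bc(t))$. First I would invoke Corollary~\ref{cor:intepoly}: since $\br$ and $\bc$ are integer vectors, $\cT(\br,\bc)$ is an integral polytope, so
\[ f(\cT(\br, \bc),\z) = \sum_{M \in \vert(\cT(\br,\bc))} \z^M \, f(\fcone(\cT(\br,\bc), M), \z). \]
Substituting Formula~\eqref{equ:mgffeas} of Corollary~\ref{cor:PertAux} for each term $f(\fcone(\cT(\br,\bc),M),\z)$ turns the right-hand side into the double sum
\[ f(\cT(\br, \bc),\z) = \sum_{M \in \vert(\cT(\br,\bc))} \ \sum_{T \in \PertAux(M)} \z^M \prod_{e \not\in E(T)} \frac{1}{1 - \z^{\cyc(T,e)}}. \]

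The key point is then that $\{\PertAux(M) \mid M \in \vert(\cT(\br,\bc))\}$ is a partition of $\vertAux(\br(t),\bc(t))$. For disjointness, if a tree $T$ lies in $\PertAux(M) \cap \PertAux(M')$ then $\aux(M)$ and $\aux(M')$ are both subgraphs of $T$, so Lemma~\ref{lem:samevrt} forces $M = M'$. For covering, given $T \in \vertAux(\br(t),\bc(t))$, Lemma~\ref{lem:uniquevert}(ii) (together with the fact, established in the proof of Lemma~\ref{lem:unipert}, that the matrix $M_T$ of \eqref{equ:MT} is a vertex of $\cT(\br,\bc)$ whose auxiliary graph is a subgraph of $T$) shows that $T \in \PertAux(M_T)$ and that $M_T$ is the unique vertex of $\cT(\br,\bc)$ with this property. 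This also verifies the ``equivalently'' clause at the end of the statement.

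Finally I would reindex: every $T \in \vertAux(\br(t),\bc(t))$ appears in exactly one inner sum above, namely the one with $M = M_T$, and for that summand $\z^M = \z^{M_T}$. Collapsing the double sum therefore gives
\[ f(\cT(\br, \bc),\z) = \sum_{T \in \vertAux(\br(t), \bc(t))} \z^{M_T} \prod_{e \not\in E(T)} \frac{1}{1 - \z^{\cyc(T,e)}}, \]
which is Formula~\eqref{equ:mgfdeg}. The entire argument is bookkeeping once Corollary~\ref{cor:PertAux} is in hand; the only substantive ingredient is the partition property, and that is precisely what Lemmas~\ref{lem:samevrt} and~\ref{lem:uniquevert} were set up to provide, so I do not anticipate a real obstacle here.
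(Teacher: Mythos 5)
Your proof is correct and follows essentially the same route as the paper, which cites Lemma \ref{lem:unipert}, Theorem \ref{thm:funcentire}, and Lemma \ref{lem:uniquevert}; you simply unpack Theorem \ref{thm:funcentire} into Corollary \ref{cor:intepoly} plus the local result Corollary \ref{cor:PertAux} and then carry out the double-sum reindexing by hand via the partition property, which is precisely what makes the paper's ``follows immediately'' work.
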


\begin{proof}
	Formula \eqref{equ:mgfdeg} follows immediately from Lemma \ref{lem:unipert}, Theorem \ref{thm:funcentire}, and Lemma \ref{lem:uniquevert}.
\end{proof}

%Formula \eqref{equ:mgfdeg} gives the MGF of the entire central transportation polytope $\cT(\br,\bc)$. If we want to find the MGF of the feasible cone of each vertex $M$ of $\cT(\br,\bc),$ we need to describe the set of vertices in the perturbed polytopes $\cT(\br(t),\bc(t))$ that converges to $M.$ It turns out it can be done by looking at the auxiliary graphs again. 

\subsection*{Maximum number of vertices}
We finish this section by an additional result we obtain from the perturbation we define in Lemma \ref{lem:unipert}.

Suppose $\cT(\br,\bc)$ is a central transportation polytope of order $m \times n,$ and $\cT(\br',\bc')$ is a transportation polytope of same order. By Theorem 7.1 of Chapter 6 in \cite{yemelichevKK}, $\cT(\br',\bc')$ has the maximum possible number of vertices if and only if for all $\lambda \in (0,1)$, the transportation polytope $\cT(\lambda\br+(1-\lambda)\br', \lambda \bc + (1-\lambda)\bc')$ is non-degenerate. Because the perturbation we define in Lemma \ref{lem:unipert} is linear, we have the following result:

\begin{lem}\label{lem:maxvert}
	Suppose $\cT(\br,\bc)$ is a central transportation polytope. Then the transportation polytopes $\cT(\br(t), \bc(t))$, $0 < t < \frac{1}{Km},$ we defined in Lemma \ref{lem:unipert} achieve the maximum number of vertices among all the transportation polytopes of order $m \times n.$
\end{lem}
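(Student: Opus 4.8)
The plan is to reduce the statement directly to the characterization of maximum-vertex transportation polytopes recalled immediately before the lemma (Theorem 7.1 of Chapter 6 in \cite{yemelichevKK}), combined with part (i) of Lemma \ref{lem:unipert}. Since $\cT(\br,\bc)$ is central by hypothesis, that characterization says a transportation polytope $\cT(\br',\bc')$ of order $m \times n$ attains the maximum possible number of vertices if and only if $\cT(\lambda\br + (1-\lambda)\br',\, \lambda\bc + (1-\lambda)\bc')$ is non-degenerate for every $\lambda \in (0,1)$. I would fix $t \in (0,\tfrac{1}{Km})$, take $(\br',\bc') = (\br(t),\bc(t))$, and verify exactly this non-degeneracy condition.

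The key observation is that the perturbation of Lemma \ref{lem:unipert} is affine-linear in the parameter: writing $\be_m$ for the $m$-th standard basis vector in $\R^n$, we have $\br(t) = \br - t\,\1$ and $\bc(t) = \bc - t\,m\,\be_m$. Consequently, for any $\lambda \in (0,1)$,
\[
  \lambda\br + (1-\lambda)\br(t) = \br - (1-\lambda)t\,\1 = \br(s), \qquad
  \lambda\bc + (1-\lambda)\bc(t) = \bc - (1-\lambda)t\,m\,\be_m = \bc(s),
\]
where $s := (1-\lambda)t$. Since $\lambda \in (0,1)$ and $0 < t < \tfrac{1}{Km}$, we get $0 < s < t < \tfrac{1}{Km}$, so Lemma \ref{lem:unipert}(i) applies and tells us that $\cT(\br(s),\bc(s))$ is non-degenerate. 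Thus the convex combination $\cT(\lambda\br + (1-\lambda)\br(t),\, \lambda\bc + (1-\lambda)\bc(t)) = \cT(\br(s),\bc(s))$ is non-degenerate for every $\lambda \in (0,1)$, and the cited characterization yields that $\cT(\br(t),\bc(t))$ has the maximum possible number of vertices among transportation polytopes of order $m \times n$.

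There is no substantial obstacle in this argument; the only point requiring any care is checking that the parameter $s$ of the convex-combination polytope stays inside the interval $(0,\tfrac{1}{Km})$ on which Lemma \ref{lem:unipert}(i) guarantees non-degeneracy, which is automatic since $0 < 1-\lambda < 1$ forces $s < t < \tfrac{1}{Km}$. (If one wanted a fully self-contained write-up one could also recheck the $\lambda \to 0,1$ endpoints, but these are excluded in the statement of the characterization, so they play no role.)
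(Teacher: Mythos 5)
Your proof is correct and takes essentially the same route as the paper: observe that the convex combination $\lambda(\br,\bc) + (1-\lambda)(\br(t),\bc(t))$ equals $(\br((1-\lambda)t), \bc((1-\lambda)t))$, apply Lemma \ref{lem:unipert}(i) to conclude non-degeneracy, and invoke the cited characterization from \cite{yemelichevKK}. (One small slip: the perturbed coordinate of $\bc$ is the $n$-th, so you mean the $n$-th standard basis vector of $\R^n$, not the $m$-th; this is purely notational and does not affect the argument.)
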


\begin{proof}
For any $\lambda \in (0,1),$ it is clear that
\[\cT(\lambda\br+(1-\lambda)\br(t), \lambda \bc + (1-\lambda)\bc(t)) = \cT(\br( (1-\lambda) t), \bc( (1-\lambda)t)).\]
Since $0 < (1-\lambda)t < t < \frac{1}{Km},$ by Lemma \ref{lem:unipert}/(i), the above transportation polytope is non-degenerate. Then the result follows.
\end{proof}

%We finish this section by a remark. Using Theorem 7.1 of Chapter 6 in \cite{yemelichevKK}, we can show that when $\cT(\br, \bc)$ is a central transportation polytope, the transportation polytopes $\cT(\br(t), \bc(t))$ we defined in Lemma \ref{lem:unipert} achieve the maximum number of vertices among all the transportation polytopes of order $m \times n.$

\section{Central transportation polytope of order $kn \times n$}\label{sec:central}
In this section, %we study the central transportation polytopes of order $kn \times n.$ 
We always assume that $\cT(\br,\bc)$ is a central transportation polytope of order $m \times n$, where $m = kn,$ and $\br = (a, \dots, a)$ and $\bc = (b, \dots, b)$ are two integer vectors. 
We apply the perturbation defined in the last section to family of central transportation polytopes $\{ \cT(\br(t), \bc(t)) \ | \ t \in (0, \frac{1}{m})\}$ satisfying the condition of Theorem \ref{thm:func}, where
\[ \br(t) = (a - t, \dots, a - t), \ \bc(t) = (b, \dots, b, b-mt), \ 0 \le t < \frac{1}{m}.\] 
We have the following theorem on the vertices of $\cT(\br(t),\bc(t)).$ Recall that $\ST_{k,n}$ is the set of spanning trees of $K_{kn,n}$ satisfying $\delta(T) = (k+1, \dots, k+1, k).$

\begin{thm}\label{thm:case2}
	Let $t \in (0, \frac{1}{m}).$ The set of vertices of $\cT(\br(t), \bc(t))$ is in bijection with the set $\ST_{k,n}.$
%the set of the spanning trees $T$ of $K_{m,n}$ satisfying $\delta(T) = (k+1, \dots, k+1, k).$ 
More specifically, $\ST_{k,n}$ is the set of the auxiliary graphs of the vertices of $\cT(\br(t),\bc(t)):$
\[ \ST_{k,n}=\vertAux(\br(t), \bc(t)).\]
Furthermore, for any $T \in \ST_{k,n},$ the corresponding vertex of $\cT(\br(t), \bc(t))$ is the matrix $M_T(t)$ whose entries are defined as below, considering $T$ a tree rooted at $w_n$: 
\begin{equation}\label{equ:case2sol}
	M_T(t)(i,j)  = \begin{cases}a - l(T_{u_i}) \ t, & \text{ if $w_j$ is the parent of $u_i$ in $T$;} \\
		l(T_{w_j}) \ t, & \text{ if $u_i$ is the parent of $w_j$ in $T$;} \\
        0, &  \mbox{otherwise}.\end{cases}
\end{equation}
\end{thm}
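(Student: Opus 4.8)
The plan is to prove the two inclusions $\vertAux(\br(t),\bc(t))\subseteq\ST_{k,n}$ and $\ST_{k,n}\subseteq\vertAux(\br(t),\bc(t))$ for a fixed $t\in(0,\tfrac1m)$; the asserted bijection and the formula \eqref{equ:case2sol} then follow from Theorem \ref{thm:charvert}. Throughout I use that $\cT(\br(t),\bc(t))$ is non-degenerate (part (i) of Lemma \ref{lem:unipert}), so that by Theorem \ref{thm:charvert} its vertices correspond bijectively, via $\aux$, to the spanning trees of $K_{m,n}$ that arise as auxiliary graphs of points of the polytope; I also record that $b=ka$, forced by $ma=nb$ and $m=kn$. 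For the first inclusion, let $M$ be a vertex with $T=\aux(M)$, a spanning tree, so $|E(T)|=m+n-1=n(k+1)-1$. The degree $d_j$ of $w_j$ in $T$ is the number of positive entries in column $j$ of $M$, and each such entry is at most $r_i(t)=a-t<a$. Comparing with the column sums gives $d_j(a-t)\ge b=ka$ for $j<n$, whence $d_j\ge k+1$; and $d_n(a-t)\ge b-mt=k(a-nt)$, whence $d_n\ge k$ because $k(a-nt)/(a-t)>k-1$, an inequality equivalent to $t(kn-k+1)<a$ that follows from $t<\tfrac1m$. Since $\sum_j d_j=n(k+1)-1=(n-1)(k+1)+k$ is exactly the sum of these lower bounds, all the inequalities are equalities, i.e. $\delta(T)=(k+1,\dots,k+1,k)$ and $T\in\ST_{k,n}$.

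For the reverse inclusion I would fix $T\in\ST_{k,n}$, root it at $w_n$, and verify directly that the matrix $M_T(t)$ given by \eqref{equ:case2sol} lies in $\cT(\br(t),\bc(t))$ with auxiliary graph exactly $T$. Positivity of the prescribed entries uses $t<\tfrac1m$ again: for an ``upward'' edge $l(T_{u_i})t\le mt<1\le a$, so $a-l(T_{u_i})t>0$, and for a ``downward'' edge $l(T_{w_j})\ge k\ge1$ because every right vertex other than the root has exactly $k$ children; all other entries being $0$, we get $\aux(M_T(t))=E(T)$. The row- and column-sum conditions then reduce to the telescoping identities $l(T_{u_i})=1+\sum l(T_{w_j})$ (sum over children $w_j$ of $u_i$) and $l(T_{w_j})=\sum l(T_{u_i})$ (sum over children $u_i$ of $w_j$), together with $l(T_{w_n})=m$ and the child counts above; they give row sums $a-t$, column sums $b$ for $j<n$, and column sum $b-mt$ for $j=n$. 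Hence $M_T(t)$ is a point of $\cT(\br(t),\bc(t))$ whose auxiliary graph is a spanning tree, so by Theorem \ref{thm:charvert} it is a vertex and $T\in\vertAux(\br(t),\bc(t))$; by the bijectivity clause of that theorem (cf. Lemma \ref{lem:samevrt}) it is the vertex corresponding to $T$, which establishes the formula \eqref{equ:case2sol}.

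Combining the two inclusions yields $\vertAux(\br(t),\bc(t))=\ST_{k,n}$, and Theorem \ref{thm:charvert} upgrades this to the stated bijection between $\vert(\cT(\br(t),\bc(t)))$ and $\ST_{k,n}$. The step I expect to be the main obstacle is the degree count in the first inclusion: the exact right degree sequence has to be extracted purely from the column-sum constraints, and since the only slack available is the smallness of $t$, the inequalities $d_j\ge k+1$ and $d_n\ge k$ must be arranged so that their total saturates the edge count $n(k+1)-1$ on the nose. The remaining verifications are routine bookkeeping on rooted trees.
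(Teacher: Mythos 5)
Your proposal is correct and follows essentially the same route as the paper's proof: establish $b=ka$, show every vertex's auxiliary graph $T$ satisfies $\delta(T)=(k+1,\dots,k+1,k)$ by bounding entries by $a-t$ and saturating the edge count $n(k+1)-1$, then verify directly that $M_T(t)$ from \eqref{equ:case2sol} lies in $\cT(\br(t),\bc(t))$. You simply spell out two steps the paper treats tersely — the inequality giving $d_n\ge k$ and the positivity of the entries of $M_T(t)$ (hence $\aux(M_T(t))=T$) — but the decomposition, the use of Theorem~\ref{thm:charvert}, and the final verification of row/column sums are the same.
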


%\begin{rem}
%When we consider $T$ as a rooted tree rooted at $w_n,$ having right degree sequence $(k+1,\dots, k+1, k)$ is equivalent to each right vertex $w_j$ have exactly $k$ children.
%\end{rem}

\begin{proof}
We have $akn=bn.$ So $ak = b.$ 

Suppose $M$ is a vertex of $\cT(\br(t), \bc(t)).$ By Theorem \ref{thm:charvert}, the auxiliary graph $T:=\aux(M)$ is a spanning tree. Hence, the number of edges in $T$ is $kn+n-1= (k+1)n-1.$ It is clear that each entry of $M$ cannot exceed $a-t.$ For any $j: 1 \le j \le n-1$, since $\sum_{i=1}^m M(i,j) = b$ and $(a-t)k < b,$ there are at least $k+1$ $i$'s such that $M(i,j)$ is positive. This means the right vertex $w_j$ has at least $k+1$ adjacent edges in $T.$ 
%For the right vertex $w_n,$ since the sum of the weights on its adjacent edges if $b -mt > b-1,$ and $(a-t)(k-1) < b -(a-t) < b-1,$ it has at least adjacent edges. 
Similarly, we can argue that the right vertex $w_n$ has at least $k$ adjacent edges. However, the sum of the right degree sequence of $T$ is equal to the number of edges in $T$ which is $(k+1)n-1$. Thus, we must have $\delta(T) = (k+1, \dots, k+1, k).$

On the other hand, given $T \in \ST_{k,n},$ to show that $T$ is the auxiliary graph of a vertex of $\cT(\br(t),\bc(t)),$ it suffices to verify $M_T(t)$ defined by \eqref{equ:case2sol} is a point in $\cT(\br(t), \bc(t)).$ This can be proved directly by checking the row sum of $M_T(t)$ is always $a-t$, the first $n-1$ column sums are always $b=ak$, and the last column sum is $b-mt.$
\end{proof}

We see that Theorem \ref{thm:mgfcase2} follows from Theorem \ref{thm:case2} and Corollaries \ref{cor:PertAux} and \ref{cor:unipert}.
We can actually analyze the data used to enumerate the formulas in Theorem \ref{thm:mgfcase2} further, which helps to restate Theorem \ref{thm:mgfcase2} with more fundamental combinatorial objects, as well as to figure out the number of vertices in $\cT(\br(t),\bc(t)).$
It is clear that the vertices of $\cT(\br,\bc)$ are the $\{0,a\}$-matrices in which each row has exact one entry of $a$ and each column has exactly $k$ entries of $a.$ These matrices corresponding to ``$k$ to $1$'' matching from the $m=kn$ left vertices to the $n$ right vertices. This motivates the following definition.
\begin{defn}
	We call an $kn \times n$ matrix $M$ a {\it $k$-to-$1$ matching matrix} if $M$ is a $\{0,1\}$-matrix such that there is exactly one $1$ in each row and exactly $k$ $1$'s in each column. We denote by $\Mat_{k,n}$ the set of all the $kn \times n$ $k$-to-$1$ matching matrices. 

	We also call the auxiliary graph of each $k$-to-$1$ matching matrix a {\it $k$-to-$1$ matching graph}.
\end{defn}
With this definition, the vertices of $\cT(\br, \bc)$ is the set 
\[ \vert(\cT(\br,\bc)) = \{ a M \ | \ M \in \Mat_{k,n} \} =: a \Mat_{k,n}.\]

Now we connect the set $\vertAux(\br,\bc)=\ST_{k,n}$ with simpler combinatorial objects,
denoting by $\cR_n$ the set of all the rooted trees on $\{w_1, w_2, \dots, w_n\}$ rooted at $w_n.$ 
\begin{lem}\label{lem:bijrtt}
	There is a bijection between $\Mat_{k,n} \times \cR_n \times [k]^{n-1}$ and $\ST_{k,n}.$
\end{lem}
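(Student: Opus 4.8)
The plan is to construct the bijection between $\ST_{k,n}$ and $\Mat_{k,n} \times \cR_n \times [k]^{n-1}$ explicitly by decomposing a spanning tree $T \in \ST_{k,n}$ of $K_{kn,n}$ into three pieces of data: a $k$-to-$1$ matching, a rooted tree on the right vertices, and a sequence of "tie-breaking" choices. Recall $T$ has right degree sequence $(k+1,\dots,k+1,k)$. The key structural observation is that $T$, viewed as a tree rooted at $w_n$, has every left vertex $u_i$ of degree at least $1$; those left vertices of degree exactly $1$ are leaves, and each such leaf is matched to its unique neighbor $w_j$. The plan is to show that after removing the leaf-edges of this type, every left vertex that remains has degree $2$ in $T$ (one parent edge, one child edge), which is forced by a counting argument: the total number of edges is $(k+1)n - 1$, and the right degrees sum correctly, so the "surplus" left degree is exactly the number that must serve as internal connectors.

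First I would make precise the contraction map $\ST_{k,n} \to \cR_n$: given $T$ rooted at $w_n$, contract each left vertex $u_i$ of degree $2$ by merging its parent edge and child edge (identifying $w_j \sim w_{j'}$ where $w_j$ is the parent of $u_i$ and $w_{j'}$ is the child), and delete each left vertex of degree $1$ together with its incident edge; the result is a tree on $\{w_1,\dots,w_n\}$ rooted at $w_n$, i.e., an element of $\cR_n$. Second, the $k$-to-$1$ matching: each left vertex $u_i$ gets matched to the right vertex $w_j$ that was its parent in $T$; I would verify this is a valid element of $\Mat_{k,n}$ by checking that each $w_j$ with $j < n$ is the parent of exactly $k$ left vertices (it has degree $k+1$, one edge going up to its own parent, so $k$ children) and $w_n$ is the parent of exactly $k$ left vertices (degree $k$, no parent). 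Third, the data in $[k]^{n-1}$ records, for each edge $\{w_j, w_{j'}\}$ of the contracted tree in $\cR_n$ that arose by contracting some degree-$2$ left vertex $u_i$, which of the $k$ children of $w_{j}$ (the parent) the vertex $u_i$ was — this is exactly the ambiguity introduced when we undo the contraction, and since $w_j$ has $k$ children for each $j \neq n$ and the $\cR_n$-tree has $n-1$ non-root vertices each contributing one such choice, the index set is $[k]^{n-1}$.

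To show this is a bijection I would construct the inverse directly: given $(N, R, (s_1,\dots,s_{n-1})) \in \Mat_{k,n} \times \cR_n \times [k]^{n-1}$, the matching $N$ tells us the parent $w_{p(i)}$ of each $u_i$; orienting $R$ away from $w_n$ gives, for each $w_{j'} \neq w_n$, a parent $w_{j}$ in $R$, and the index $s_{j'} \in [k]$ selects which of the $k$ left vertices matched to $w_{j}$ (i.e., children of $w_{j}$ in the tree being built) should sit on the edge connecting $w_{j}$ to $w_{j'}$; the remaining left vertices matched to each $w_j$ become leaves hanging off $w_j$. One then checks this produces a spanning tree of $K_{kn,n}$ with the correct degree sequence, and that the two maps are mutually inverse. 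The main obstacle I anticipate is the bookkeeping in verifying well-definedness: one must confirm that in the forward direction every left vertex really does have degree $1$ or $2$ (not $3$ or more), which follows from $\sum_i \deg(u_i) = |E(T)| = (k+1)n-1$ together with $\sum_j \deg(w_j) = (k+1)n-1$ and the fact that $T$ is connected with $kn + n$ vertices, forcing the degree distribution on the left side to be exactly "as spread out as possible." Once this rigidity is established, the three-way decomposition and its inverse are essentially forced, and the remaining verifications are routine.
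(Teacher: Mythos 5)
Your decomposition scheme is essentially the same as the paper's (match each left vertex to its parent; contract out left vertices to get a rooted tree on the right vertices; record the positional index of the connecting left vertex in $[k]$), and your inverse map is correct. However, there is a genuine gap in the forward direction: the claim that every non-leaf left vertex has degree exactly $2$, which you say is ``forced by a counting argument,'' is false. The counting only gives $\sum_i \deg(u_i) = (k+1)n-1$, i.e., total excess degree $n-1$ over the $kn$ left vertices, but nothing forces that excess to be spread one-per-vertex. For a concrete counterexample take $k=1$, $n=3$, and the tree $T$ with edges $\{u_1,w_3\}, \{u_1,w_1\}, \{u_1,w_2\}, \{u_2,w_1\}, \{u_3,w_2\}$: its right degree sequence is $(2,2,1)$ so $T \in \ST_{1,3}$, yet $u_1$ has degree $3$. (This $T$ is exactly what you get when the rooted tree $R \in \cR_3$ is the star with both $w_1$ and $w_2$ as children of $w_3$ and the matching assigns $u_1$ to $w_3$.) Your contraction rule --- ``merge the parent edge and the child edge of each degree-$2$ left vertex'' --- is literally undefined on such trees, so the forward map does not exist as stated.

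The fix, which is what the paper does, is to not phrase the contraction in terms of the local degree of $u_i$ at all: partition $E(T)$ into $E_1$ (edges from a right vertex down to a child, which form the $k$-to-$1$ matching graph since each $u_i$ has exactly one parent) and $E_2$ (edges from a right vertex up to its parent), then contract every edge of $E_1$. Contracting $u_i$ into its parent $w_j$ works regardless of how many children $u_i$ has: a degree-$3$ left vertex simply contributes two edges to the contracted tree $R$, and the $[k]^{n-1}$ coordinate for each of those two edges records the same index. Your inverse map already handles this correctly (two children $w_{j'}, w_{j''}$ of $w_j$ in $R$ may be assigned the same $s$-value, in which case they share a left vertex of degree $3$), so once you drop the incorrect degree-$2$ claim and contract all of $E_1$ instead, the bijection goes through.
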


\begin{proof}
	Given $M \in \Mat_{k,n},$ a rooted tree $R \in \cR_n$ and $\bf = (f_1, \dots, f_{n-1}) \in [k]^{n-1},$ we can construct a tree $T \in \ST_{k,n}$ from $(M, R, \bf)$ in the following  way: We start with the $k$-to-$1$ matching graph $\aux(M).$ If $w_{j_0}$ is the parent of $w_{j}$ in $R,$ we add an edge connecting $w_j$ and the $(f_j)$th left vertex that is matched to $w_{j_0}$ in $\aux(M).$ After adding these $n-1$ edges, one can check that we actually obtain a spanning tree $T$ of $K_{kn,n}$ rooted at $w_n$ and each right vertex $w_j$ has exactly $k$ children. We can ignore the root. Then $T$ is in $\ST_{k,n}.$% in $\PertAux(v).$

	For example, the tree in Figure \ref{fig:exvalue} is the image of the tuple $(M,R,\bf)$ shown in Figure \ref{fig:exR}. %One sees that the solid lines are the edges of the auxiliary graph $\aux(M)$ of $M.$
%\[ M = \begin{pmatrix}1&0&0 \\ 1&0&0 \\ 0&1&0 \\ 0&0&1 \\ 0&1&0 \\ 0&0&1 \end{pmatrix}, R \text{ with edge sets $\{\{w_3,w_1\},\{w_1,w_2\}\}$}, \bf = (1,2). \]
\begin{figure}[htb]
\centering
{\input{exR.pstex_t}}
\caption{}
\label{fig:exR}
\end{figure}

Conversely, let $T \in \ST_{k,n}.$ Considering $T$ a rooted tree rooted at $w_n,$ because $\delta(T) = (k+1, \dots, k+1, k),$ one sees that each right vertex $w_j$ has exactly $k$ children. We separate the edge sets of $T$ into two sets: 
\begin{align*}
	E_1 =& \ \{ e \in E(T) \ | \ \text{$e$ connects a right vertex $w_j$ to one of its children}\}; \\
	E_2 =& \ \{ e \in E(T) \ | \ \text{$e$ connects a right vertex $w_j$ to its parent}\}. 
\end{align*}
In the tree shown in Figure \ref{fig:exvalue}, $E_1$ is the set of solid lines and $E_2$ is the set of the dashed lines.

It is clear that $E_1$ is the edge sets of a $k$-to-$1$ matching graph. Let $M$ be the $k$-to-$1$ matching matrix corresponding to it. If we contract all the edges in $E_1$ in $T$ (and remove all the left vertices $u_i$), we obtain a rooted tree $R \in \cR_n.$ Finally, we define $\bf \in [k]^{n-1}$ from $E_2$: If $\{w_j, u_i\} \in E_2,$ i.e., $u_i$ is the parent of $w_j$ in $T$, and suppose $u_i$ is the $s$th child for its parent, we define $f_j = s.$ Hence, we defined the inverse map.
%
%One sees that the above procedure can be reversed. Thus, we give a bijection between $\PertAux(v)$ and $\cR_n \times [k]^{n-1}.$ 
%
%The cardinality result follows from the famous result that the $|\cR_n| = n^{n-2}.$
\end{proof}

We denote by $\Phi$ the bijection from $\Mat_{k,n} \times \cR_n \times [k]^{n-1}$ to $\ST_{k,n}$ given in the above proof. 
Let $v$ be a vertex of $\cT(\br,\bc),$ i.e., $v = a M$ for some $M \in \Mat_{k,n}.$ Recall that the set $\PertAux(v)$ given in Theorem \ref{thm:mgfcase2} is defined as
\begin{align*}
	\PertAux(v) =& \ \{ T \in \ST_{k,n} \ | \ \text{ the auxiliary graph of $v$ is a subgraph of $T$}\} \\
	=& \ \{ T \in \ST_{k,n} \ | \ \text{ the auxiliary graph of $M$ is a subgraph of $T$}.\} \label{equ:pertauxdes} 
\end{align*}
It is clear that for any fixed vertex $v = a M$ of $\cT(\br,\bc)$, the bijection $\Phi$ induces a bijection from $\cR_n \times [k]^{n-1}$ to $\PertAux(v),$ which maps $(T, \bf)$ to $\Phi(M, T, \bf).$ We denote this map by $\Phi_M.$ Therefore, 
\[ \PertAux(v) = \Phi_M(\cR_n \times [k]^{n-1}).\]

We now restate Theorem \ref{thm:mgfcase2}.
\begin{cor}\label{cor:mgfcase2} Suppose $M \in \Mat_{k,n}.$ Then the MGF of the feasible cone of $\cT(\br,\bc)$ at $v = a M$ is
\begin{equation}\label{equ:mgfcase2}
	 f(\fcone(\cT(\br,\bc), v), \z) =\sum_{T \in \Phi_M(\cR_n \times [k]^{n-1})} \  \prod_{e \not\in E(T)} \frac{1}{1 - \z^{\cyc(T,e)}}.
 \end{equation}
We have two formulas for the MGF of $\cT(\br,\bc)$:
\begin{equation}\label{equ:mgfcase2entire0}
	f(\cT(\br, \bc),\z) =\sum_{M \in \Mat_{k,n}} \z^{a M} \sum_{T \in  \Phi_M(\cR_n \times [k]^{n-1})} \ \prod_{e \not\in E(T)} \frac{1}{1 - \z^{\cyc(T,e)}},
\end{equation}
and
\begin{equation}\label{equ:mgfcase2entire2}
f(\cT(\br, \bc),\z) =\sum_{T \in \ST_{k,n}} \z^{a M_T} \prod_{e \not\in E(T)} \frac{1}{1 - \z^{\cyc(T,e)}},
\end{equation}
where $M_T$ is the matrix whose corresponding graph is the unique $k$-to-$1$ matching subgraph of $T,$ or equivalently, $M_T$ is the $k$-to-$1$ matching matrix entry in the tuple $\Phi^{-1}(T).$
\end{cor}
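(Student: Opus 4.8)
The plan is to derive all three formulas directly from Theorem~\ref{thm:mgfcase2}, simply re-reading the index sets $\PertAux(\cdot)$ and the vertices $M_T$ appearing there in the language of $\Mat_{k,n}$, $\cR_n$ and $[k]^{n-1}$ via the bijection $\Phi$ of Lemma~\ref{lem:bijrtt}. Beyond Theorem~\ref{thm:mgfcase2}, the only ingredients needed are: the description $\vert(\cT(\br,\bc)) = a\,\Mat_{k,n}$ recorded just before the statement; the observation that the auxiliary graph is unchanged under multiplication by the positive scalar $a$, so that $\aux(aM) = \aux(M)$ for every $M \in \Mat_{k,n}$; and the bijection $\Phi_M \colon \cR_n \times [k]^{n-1} \to \PertAux(aM)$ induced by $\Phi$, as set up in the paragraph preceding the corollary.

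First I would fix $M \in \Mat_{k,n}$ and put $v = aM$, which is a vertex of $\cT(\br,\bc)$. Applying formula~\eqref{equ:mgfcase20} of Theorem~\ref{thm:mgfcase2} at $v$ gives $f(\fcone(\cT(\br,\bc),v),\z) = \sum_{T \in \PertAux(v)} \prod_{e \notin E(T)} \frac{1}{1-\z^{\cyc(T,e)}}$, where $\PertAux(v) = \{T \in \ST_{k,n} : \aux(v) \text{ is a subgraph of } T\}$. Since $\aux(v) = \aux(M)$, the set $\PertAux(v)$ is exactly the one on which $\Phi_M$ was constructed, so $\PertAux(v) = \Phi_M(\cR_n \times [k]^{n-1})$; substituting this identity into the sum yields~\eqref{equ:mgfcase2}.

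Next, for~\eqref{equ:mgfcase2entire0} I would apply Corollary~\ref{cor:intepoly}, equation~\eqref{equ:mgfintepoly}, to $P = \cT(\br,\bc)$ and take its vertex sum over $\vert(\cT(\br,\bc)) = a\,\Mat_{k,n}$, i.e.\ write each vertex as $v = aM$ with $M$ ranging over $\Mat_{k,n}$; inserting~\eqref{equ:mgfcase2} for the feasible-cone factor then gives~\eqref{equ:mgfcase2entire0}. For~\eqref{equ:mgfcase2entire2} I would begin from formula~\eqref{equ:mgfcase2entire1} of Theorem~\ref{thm:mgfcase2} and check that, for $T \in \ST_{k,n}$, the matrix $M_T$ of the corollary (the $k$-to-$1$ matching matrix in the tuple $\Phi^{-1}(T)$) satisfies $a M_T = $ the vertex denoted $M_T$ in the theorem: indeed, by the construction of $\Phi^{-1}$ in Lemma~\ref{lem:bijrtt} the $k$-to-$1$ matching graph $\aux(M_T)$ is a subgraph of $T$, so $a M_T \in \vert(\cT(\br,\bc))$ has auxiliary graph contained in $T$, and by Lemma~\ref{lem:samevrt} (equivalently, by the uniqueness assertion in Theorem~\ref{thm:mgfcase2}) it is the unique such vertex; hence the exponent $\z^{aM_T}$ in~\eqref{equ:mgfcase2entire2} is the exponent $\z^{M_T}$ in~\eqref{equ:mgfcase2entire1}, and the two displays coincide. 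Alternatively, \eqref{equ:mgfcase2entire2} can be obtained from~\eqref{equ:mgfcase2entire0} by regrouping the sum along the partition $\ST_{k,n} = \bigsqcup_{M \in \Mat_{k,n}} \Phi_M(\cR_n \times [k]^{n-1})$ supplied by $\Phi$, noting that on the block indexed by $M$ the monomial $\z^{aM_T}$ is the constant $\z^{aM}$.

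I expect no genuine obstacle: the whole argument is bookkeeping. The one point that merits care is the overloaded notation $M_T$, used both for a $k$-to-$1$ matching matrix and for a vertex of $\cT(\br,\bc)$. One must verify that the two descriptions --- ``the matrix whose graph is the unique $k$-to-$1$ matching subgraph of $T$'' and ``the unique vertex of $\cT(\br,\bc)$ whose auxiliary graph is a subgraph of $T$'' --- determine the same object up to the factor $a$, which is immediate from Lemma~\ref{lem:samevrt}.
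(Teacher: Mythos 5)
Your proof is correct and follows essentially the same route as the paper's: apply Theorem~\ref{thm:mgfcase2}, rewrite $\PertAux(v)$ as $\Phi_M(\cR_n\times[k]^{n-1})$ using the discussion preceding the corollary, invoke Corollary~\ref{cor:intepoly} for~\eqref{equ:mgfcase2entire0}, and match up the two uses of the symbol $M_T$ via Lemma~\ref{lem:samevrt}. The paper's proof is a two-line pointer to ``the discussion above''; you have simply made that bookkeeping explicit, including the genuinely worthwhile check that the theorem's vertex $M_T$ and the corollary's matching matrix $M_T$ agree up to the scalar $a$.
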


\begin{proof}
	Formulas \eqref{equ:mgfcase2} and \eqref{equ:mgfcase2entire2} follows from Theorem \ref{thm:mgfcase2} and the discussion above. Formula \eqref{equ:mgfcase2entire0} follows from Formula \eqref{equ:mgfcase2} and Corollary \ref{cor:intepoly}.
\end{proof}

When $k=1$ and $a=1,$ the set $\M_{k,n}$ is actually the symmetric group $\S_n,$ and the polytope $\cT(\br,\bc)$ is the Birkhoff polytope $B_n.$ Therefore, for any $\sigma \in \S_n$, the map $\Phi_\sigma$ is a bijection from $\cR_n$ to $\PertAux(\sigma)$. Hence, we obtain the following theorem for the Birkhoff polytopes, which is equivalent to Theorem 1.1 and Corollary 4.1 in \cite{birkhoff}. 

\begin{thm}\label{thm:birkhoff} Suppose $\sigma \in \S_n$ is a vertex of the Birkhoff polytope $B_n.$ Then the MGF of the feasible cone of $B_n$ at $\sigma$ is
\[ f(\fcone(B_n, \sigma), \z) =\sum_{T \in \Phi_\sigma(\R_n)} \prod_{e \not\in E(T)} \frac{1}{1 - \z^{\cyc(T,e)}}.\] 
We have two formulas for the MGF of $B_n$:
	\[f(B_n,\z) =\sum_{\sigma \in \S_n} \z^{\sigma}\sum_{T \in \Phi_\sigma(\R_n)} \prod_{e \not\in E(T)} \frac{1}{1 - \z^{\cyc(T,e)}},\]
	and
	\[f(B_n,\z)= \sum_{T \in \ST_{1,n}} \z^{M_T} \prod_{e \not\in E(T)} \frac{1}{1 - \z^{\cyc(T,e)}},\]
where $M_T$ is the matrix whose corresponding graph is the unique $1$-to-$1$ matching subgraph of $T.$ 
\end{thm}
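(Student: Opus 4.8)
The plan is to obtain Theorem \ref{thm:birkhoff} as the specialization of Corollary \ref{cor:mgfcase2} to $k = 1$, $a = 1$, so that essentially no new argument is needed beyond unwinding the definitions. First I would record the relevant identifications. With $m = kn$ and $k = 1$ we have $m = n$; the relation $akn = bn$ (observed in the proof of Theorem \ref{thm:case2}) forces $a = b$, and here $a = b = 1$, so $\cT(\br,\bc)$ is the polytope of $n \times n$ matrices with all row and column sums equal to $1$, i.e., the Birkhoff polytope $B_n$. A $1$-to-$1$ matching matrix is exactly an $n \times n$ permutation matrix, so $\Mat_{1,n}$ is identified with $\S_n$; a vertex $\sigma$ of $B_n$ corresponds to the permutation matrix $M_\sigma$, and since $a = 1$ we have $\z^{a M_\sigma} = \z^{M_\sigma} = \z^\sigma$ (and $\z^{a M_T} = \z^{M_T}$). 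Finally, when $k = 1$ the index set $[k]^{n-1} = \{1\}^{n-1}$ is a single point, so the bijection $\Phi \colon \Mat_{k,n}\times\cR_n\times[k]^{n-1} \risom \ST_{k,n}$ of Lemma \ref{lem:bijrtt} becomes a bijection $\Phi \colon \S_n\times\cR_n \risom \ST_{1,n}$, and for each fixed $\sigma$ the induced map $\Phi_\sigma \colon \cR_n \to \PertAux(\sigma)$, $\Phi_\sigma(R) = \Phi(M_\sigma,R)$, is a bijection; this is exactly the bijectivity assertion of the theorem, and it gives $\PertAux(\sigma) = \Phi_\sigma(\cR_n)$.

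With these identifications in hand, I would substitute $k = 1$ and $a = 1$ into the three formulas of Corollary \ref{cor:mgfcase2}. Equation \eqref{equ:mgfcase2} becomes
\[
 f(\fcone(B_n,\sigma),\z) = \sum_{T \in \Phi_\sigma(\cR_n)} \prod_{e \not\in E(T)} \frac{1}{1 - \z^{\cyc(T,e)}};
\]
equation \eqref{equ:mgfcase2entire0}, using $\z^{aM} = \z^\sigma$ and $\Mat_{1,n} = \S_n$, becomes the first displayed formula for $f(B_n,\z)$; and equation \eqref{equ:mgfcase2entire2}, using $\z^{a M_T} = \z^{M_T}$ and the fact that the unique $1$-to-$1$ matching submatrix of $T$ is the permutation-matrix coordinate of $\Phi^{-1}(T)$, becomes the second displayed formula for $f(B_n,\z)$. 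This proves all three formulas.

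The only point that requires genuine care --- and the one I expect to be the main, though still modest, obstacle --- is verifying that these formulas literally coincide with Theorem 1.1 and Corollary 4.1 of \cite{birkhoff}, rather than merely resembling them. Concretely, one must check that the rooted-tree set $\cR_n$ and the associated $(0,-1,1)$-matrices $\cyc(T,e)$ used here agree, up to the relabeling built into $\Phi$, with the rooted-tree and cycle data employed in \cite{birkhoff}. This is a comparison of bookkeeping conventions with the earlier paper, not a mathematical difficulty.
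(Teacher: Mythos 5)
Your proposal is correct and matches the paper's own derivation: Theorem \ref{thm:birkhoff} is presented in the paper as the $k=1$, $a=1$ specialization of Corollary \ref{cor:mgfcase2}, using exactly the identifications you list ($\Mat_{1,n}\cong\S_n$, $[1]^{n-1}$ a singleton, $\z^{aM}=\z^\sigma$, and $\Phi_\sigma\colon\cR_n\to\PertAux(\sigma)$ a bijection). The paper likewise simply asserts, without further verification, that the resulting formulas are equivalent to Theorem 1.1 and Corollary 4.1 of \cite{birkhoff}, so the bookkeeping comparison you flag is left implicit there too.
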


Next, we give formulas for the number of vertices of $\cT(\br(t),\bc(t)).$
\begin{cor}
The number of vertices of $\cT(\br(0),\br(0)) = \cT(\br,\bc)$ is $\displaystyle \frac{(kn)!}{(k!)^n}.$

The number of vertices of $\cT(\br(t),\bc(t))$ ($t \in (0, \frac{1}{m})$) is $\displaystyle \frac{(kn)!}{(k!)^n} n^{n-2} k^{n-1}.$
\end{cor}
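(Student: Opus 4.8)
The plan is to count the two vertex sets directly using the combinatorial descriptions already established.

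For the first count, recall from the discussion preceding Lemma \ref{lem:bijrtt} that $\vert(\cT(\br,\bc)) = a\Mat_{k,n}$, so it suffices to count $k$-to-$1$ matching matrices. A matrix in $\Mat_{k,n}$ is determined by choosing, for each of the $kn$ rows, which of the $n$ columns holds its unique $1$, subject to each column receiving exactly $k$ rows; equivalently it is an ordered partition of the $kn$ left vertices into $n$ labeled blocks of size $k$ (the block assigned to column $j$). The number of such partitions is the multinomial coefficient $\binom{kn}{k,k,\dots,k} = \frac{(kn)!}{(k!)^n}$, which gives the first formula.

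For the second count, Theorem \ref{thm:case2} gives a bijection between $\vert(\cT(\br(t),\bc(t)))$ and $\ST_{k,n}$, so it suffices to compute $|\ST_{k,n}|$. By Lemma \ref{lem:bijrtt}, $\ST_{k,n}$ is in bijection with $\Mat_{k,n}\times \cR_n\times [k]^{n-1}$, hence $|\ST_{k,n}| = |\Mat_{k,n}|\cdot|\cR_n|\cdot k^{n-1}$. We have already counted $|\Mat_{k,n}| = \frac{(kn)!}{(k!)^n}$, and $|\cR_n| = n^{n-2}$ by Cayley's formula: the number of labeled trees on the vertex set $\{w_1,\dots,w_n\}$ is $n^{n-2}$, and designating $w_n$ as the root does not change this count. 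Multiplying the three factors yields $\frac{(kn)!}{(k!)^n}\, n^{n-2}\, k^{n-1}$.

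The proof has essentially no obstacle: both counts reduce immediately to standard enumerative facts (multinomial coefficients and Cayley's formula) once the bijections from Theorem \ref{thm:case2} and Lemma \ref{lem:bijrtt} are in hand. The only point worth emphasizing is that passing from $t=0$ to $t\in(0,\tfrac{1}{m})$ genuinely changes the combinatorial model: at $t=0$ the polytope is degenerate and its vertices correspond to the $k$-to-$1$ matching graphs, whereas the perturbation resolves each such vertex into the $n^{n-2}k^{n-1}$ vertices of the simple polytope $\cT(\br(t),\bc(t))$ sitting in the fiber $\cR_n\times[k]^{n-1}$ over it, consistently with $\PertAux(M) = \Phi_M(\cR_n\times[k]^{n-1})$.
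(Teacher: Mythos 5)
Your proof is correct and follows essentially the same route as the paper: count $\Mat_{k,n}$ via the multinomial coefficient for the first claim, and for the second combine Theorem \ref{thm:case2} with the bijection of Lemma \ref{lem:bijrtt} and Cayley's formula. You just spell out the multinomial count and the role of the bijection slightly more explicitly than the paper does.
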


\begin{proof}
	The first statement follows from the observation that the cardinality of $\Mat_{k,n}$ is $\displaystyle \frac{(kn)!}{(k!)^n}.$

	It is well-known that the number of rooted trees on $n$ vertices with a fixed root is $n^{n-2}.$ Then the second statement follows from Lemma \ref{lem:bijrtt}.
\end{proof}

%By the remark we give at the end of last section, we obtain another known result.
By Lemma \ref{lem:maxvert}, we obtain another known result.
\begin{cor}[Corollary 8.6 of Chapter 6 in \cite{yemelichevKK}]
The maximum number of vertices among all the transportation polytopes of order $kn \times n$ is $\displaystyle \frac{(kn)!}{(k!)^n} n^{n-2} k^{n-1}.$
\end{cor}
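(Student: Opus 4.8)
The plan is to combine Lemma \ref{lem:maxvert} with the vertex count of $\cT(\br(t),\bc(t))$ established in the preceding corollary; since both ingredients are already available, the argument is very short.

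First I would exhibit a concrete central transportation polytope of order $kn \times n$ to which Lemma \ref{lem:maxvert} applies: take $\br = (1, 1, \dots, 1)$ with $kn$ entries and $\bc = (k, k, \dots, k)$ with $n$ entries. These are positive integer vectors, all row sums equal $1$ and all column sums equal $k$, and $\sum_i r_i = kn = \sum_j c_j$, so $\cT(\br,\bc)$ is a central transportation polytope of order $kn \times n$ in the sense of this paper. Here the least common multiple of the denominators of $\br$ and $\bc$ is $K = 1$, so the perturbation of Lemma \ref{lem:unipert} is defined for $t \in (0, \tfrac{1}{m})$ with $m = kn$, matching the range used in the vertex-count corollary.

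Next I would apply Lemma \ref{lem:maxvert} to this polytope: for every $t \in (0, \tfrac{1}{m})$ the perturbed polytope $\cT(\br(t), \bc(t))$ attains the maximum possible number of vertices among all transportation polytopes of order $kn \times n$. By the corollary immediately preceding this one, the number of vertices of $\cT(\br(t), \bc(t))$ for such $t$ equals $\frac{(kn)!}{(k!)^n} n^{n-2} k^{n-1}$. Putting these two facts together, the maximum number of vertices over all transportation polytopes of order $kn \times n$ is exactly $\frac{(kn)!}{(k!)^n} n^{n-2} k^{n-1}$, which is the claim.

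Since every step is a direct invocation of a previously established result, I do not expect any real obstacle; the only point needing (minimal) care is producing an honest central transportation polytope of the prescribed order, which the choice $\br = (1, \dots, 1)$, $\bc = (k, \dots, k)$ handles.
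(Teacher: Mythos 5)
Your proposal is correct and follows the same route the paper takes: combine Lemma~\ref{lem:maxvert} with the vertex count for $\cT(\br(t),\bc(t))$ in the preceding corollary. The paper leaves the choice of central polytope implicit (the whole of Section~\ref{sec:central} already fixes $\br=(a,\dots,a)$, $\bc=(b,\dots,b)$), whereas you spell out the concrete instance $\br=(1,\dots,1)$, $\bc=(k,\dots,k)$, but that is only a presentational difference.
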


Finally, we apply Lemma \ref{lem:mgf2volehr} to Formula \eqref{equ:mgfcase2entire2} to obtain formulas for the volume and Ehrhart polynomial of the central transportation polytope of order $kn \times n.$

\begin{cor}
	Let $C$ be a $kn \times n$ matrix such that $\langle C, \cyc(T,e)\rangle \neq 0$ for any pair of $T \in \ST_{k,n}$ and $e \not\in E(T).$
%Then the Ehrhart polynomial of the central transportation polytope $\cT(\br,\bc)$ of order $kn \times n$ is given by
%\[ i(\cT(\br,\bc),t) = \sum_{i=0}^{(kn-1)(n-1)} \frac{t^i}{i!} \sum_{T \in ST_{k,n}}  \frac{\left( \langle c, aM_T \rangle \right)^{i} \td_{(kn-1)(n-1)-i}\left( \langle c, \cyc(T,e) \ | \ e \not\in E(T) \right)}{\prod_{e \not\in E(T)} \langle c, \cyc(T,e) \rangle } .\]
Then the coefficient of $t^i$ in the Ehrhart polynomial of the central transportation polytope $\cT(\br,\bc)$ of order $kn \times n$ is given by
\[ \frac{1}{i!} \sum_{T \in ST_{k,n}}  \frac{\left( \langle C, aM_T \rangle \right)^{i} \td_{(kn-1)(n-1)-i}\left( \langle C, \cyc(T,e) \rangle \ | \ e \not\in E(T) \right)}{\prod_{e \not\in E(T)} \langle C, \cyc(T,e) \rangle } .\]
In particular, the normalized volume of $\cT(\br,\bc)$ is given by
\[ \vol(\cT(\br,\bc)) = \frac{1}{((kn-1)(n-1))!} \sum_{T \in ST_{k,n}}  \frac{\left( \langle C, aM_T \rangle \right)^{(kn-1)(n-1)}}{\prod_{e \not\in E(T)} \langle C, \cyc(T,e) \rangle } .\]

\end{cor}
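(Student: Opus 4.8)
The plan is to apply Lemma \ref{lem:mgf2volehr} directly to Formula \eqref{equ:mgfcase2entire2}, so the first task is to check that this formula has the required shape \eqref{equ:mgf}. By Corollary \ref{cor:mgfcase2}, the MGF of $\cT(\br,\bc)$ can be written as $\sum_{T \in \ST_{k,n}} \z^{aM_T} \prod_{e \notin E(T)} (1-\z^{\cyc(T,e)})^{-1}$. Since $\cT(\br(t),\bc(t))$ is non-degenerate and totally unimodular (Lemma \ref{lem:unipert}/(i) together with total unimodularity of $A_{m,n}$), each feasible cone $\fcone(\cT(\br(t),\bc(t)), M_T(t))$ is unimodular and is generated by the rays $\{\cyc(T,e) \mid e \notin E(T)\}$ by Lemma \ref{lem:charray}. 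Hence each summand is exactly a term $\z^{v_T}/\bigl((1-\z^{r_{T,1}})\cdots(1-\z^{r_{T,d}})\bigr)$ with $\epsilon_T = 1$, vertex $v_T = aM_T$, unimodular generating rays $r_{T,e} = \cyc(T,e)$, and $d$ equal to the dimension of $\cT(\br,\bc)$. The dimension of a transportation polytope of order $m \times n$ is $(m-1)(n-1) = (kn-1)(n-1)$; equivalently, this is the number $mn - (m+n-1)$ of edges of $K_{m,n}$ not in a spanning tree $T$, which matches the number of factors in the product. So $d = (kn-1)(n-1)$, confirming the shape of \eqref{equ:mgf}.

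Next I would invoke Lemma \ref{lem:mgf2volehr} with the chosen matrix $C$ playing the role of $c \in \R^D$ (identifying $kn \times n$ matrices with vectors in $\R^{kn\cdot n}$ via the usual inner product $\langle C, X\rangle = \sum_{i,j} C(i,j)X(i,j)$). The hypothesis of the lemma is that $\langle c, r_{i,j}\rangle \neq 0$ for all $i,j$; here this reads $\langle C, \cyc(T,e)\rangle \neq 0$ for every pair $T \in \ST_{k,n}$, $e \notin E(T)$, which is precisely the stated assumption on $C$. Substituting $\epsilon_T = 1$, $v_T = aM_T$, $d = (kn-1)(n-1)$, and re-indexing the sum over $i$ in \eqref{equ:ehrhart} by the set $\ST_{k,n}$ yields immediately
\[
i(\cT(\br,\bc),t) = \sum_{i=0}^{(kn-1)(n-1)} \frac{t^i}{i!} \sum_{T \in \ST_{k,n}} \frac{(\langle C, aM_T\rangle)^i\, \td_{(kn-1)(n-1)-i}(\langle C,\cyc(T,e)\rangle \mid e \notin E(T))}{\prod_{e \notin E(T)} \langle C, \cyc(T,e)\rangle},
\]
from which reading off the coefficient of $t^i$ gives the first displayed formula. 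The volume formula is then the special case $i = (kn-1)(n-1)$ of \eqref{equ:vol}, using that $\td_0 = 1$, which gives the second display.

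The only genuinely non-routine point is verifying that Formula \eqref{equ:mgfcase2entire2} is literally in the form \eqref{equ:mgf} with all $\epsilon_i = +1$ and with exactly $d$ rays per term — in other words that no Barvinok-style signed decomposition of non-unimodular cones is needed here. This is where the perturbation pays off: because the perturbed polytope is non-degenerate, its feasible cones are simple and unimodular, so Lemma \ref{lem:unicone} applies on the nose and the expansion has no cancellation. The remaining bookkeeping — identifying $D$ with $kn \cdot n$, confirming $d = (kn-1)(n-1)$, and matching indices — is straightforward. I do not anticipate any real obstacle beyond being careful that the $\td$-polynomial in Lemma \ref{lem:mgf2volehr} takes as its arguments the full list of $d$ pairings $\langle C, \cyc(T,e)\rangle$ over $e \notin E(T)$, which is exactly how it is written in the statement.
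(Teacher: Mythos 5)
Your proof is correct and follows exactly the route the paper takes: apply Lemma \ref{lem:mgf2volehr} to Formula \eqref{equ:mgfcase2entire2}. The paper leaves the verification implicit, and you have merely spelled out the (straightforward) check that the MGF is already in the form \eqref{equ:mgf} with all $\epsilon_i = +1$, unimodular rays $\cyc(T,e)$, and $d = (kn-1)(n-1)$ factors per term.
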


We remark that when $k=1,$ the above corollary gives an equivalent result to Corollary 1.2 in \cite{birkhoff}.

\subsection*{Acknowledgements}
I would like to thank Bernd Sturmfels for encouraging me to work on this project.

\bibliographystyle{amsplain}
\bibliography{gen}

\end{document}